\documentclass[11pt,a4paper]{amsart}

\usepackage{amsmath,amssymb,amsthm,mathtools}
\usepackage{enumitem}
\usepackage{geometry}
\usepackage{microtype}
\usepackage[colorlinks=true,linkcolor=blue,citecolor=blue,urlcolor=blue]{hyperref}
\usepackage[nameinlink,capitalize,noabbrev]{cleveref}
\usepackage{tikz-cd}
\usepackage{etoolbox}
\patchcmd{\abstract}
{\leftmargin3pc}
{\leftmargin0pt}
{}{}  

\numberwithin{equation}{section}
\allowdisplaybreaks
\setlist[enumerate,1]{label=(\arabic*),leftmargin=2.3em}

\newtheorem{theorem}{Theorem}[section]
\newtheorem{proposition}[theorem]{Proposition}
\newtheorem{lemma}[theorem]{Lemma}
\newtheorem{corollary}[theorem]{Corollary}
\theoremstyle{definition}
\newtheorem{definition}[theorem]{Definition}

\theoremstyle{remark}
\newtheorem{remark}[theorem]{Remark}
\newtheorem{maintheorem}{Theorem}

\title[Cartan Subalgebras and Inductive Limits ]
{Cartan Subalgebras of Dimension Drop Interval Algebras and Their Inductive Limits}
\author{Xinlong Cui}
\address{School of Mathematics and Statistics, Northeast Normal University, Changchun, {\rm130024}, China}
\email{xlcui@nenu.edu.cn}
\date{}
\subjclass[2020]{Primary 46L05, 46L35; Secondary 22A22}
\hypersetup{
  pdftitle={Cartan Subalgebras of Dimension Drop Interval Algebras and Their Inductive Limits},
  pdfauthor={Xinlong Cui},
  pdfsubject={Cartan Subalgebras of Dimension Drop Interval Algebras and Their Inductive Limits},
  pdfkeywords={ Cartan subalgebra; C*-diagonal; dimension drop interval algebra; inductive limit; Weyl groupoid; dense amalgam.
}
}

\begin{document}

\begin{abstract}
We describe the conjugacy of non-degenerate Cartan subalgebras of the dimension drop interval algebras $M_m(\widetilde{\mathbb I}_p)$ in terms of double cosets of permutation groups, and describe their spectra by finite bi-partite multigraphs. All Cartan subalgebras are non-degenerate precisely when the two parameters are coprime. We also give splitting interval examples with prescribed finite numbers of Cartan conjugacy classes. Passing to inductive limits, we construct a unital simple AD algebra $\mathcal{A}$ of real rank zero and stable rank one, with $K_0(\mathcal{A})\cong\mathbb Z[1/3]$ and $K_1(\mathcal{A})\cong\mathbb Z/2\mathbb Z$. It contains infinitely many mutually isomorphic but pairwise nonconjugate $\mathrm{C}^*$-diagonals. Their spectra are all homeomorphic to the dense amalgam of circles, while orbit multiplicities in the Weyl groupoids distinguish their embeddings. The same AD algebra also contains infinitely many pairwise nonisomorphic Cartan subalgebras.
	
\par\vspace{0.8\baselineskip}
\noindent\textit{Keywords:}
Cartan subalgebra; $\mathrm{C}^*$-diagonal; dimension drop interval algebra; inductive limit; Weyl groupoid; dense amalgam.  
\end{abstract}

\maketitle

\section{Introduction}

Cartan subalgebras connect operator algebras with the spaces and dynamical systems from which they arise. Their origins lie in Dixmier's study of maximal abelian subalgebras of finite factors \cite{D2} and Singer's work on automorphisms \cite{S}. Feldman and Moore subsequently described von Neumann algebras with Cartan subalgebras through measured equivalence relations and cocycles \cite{FM,FM2}. The corresponding topological theory developed through Renault's groupoid construction \cite{R2}, Kumjian's $\mathrm{C}^*$-diagonals \cite{K}, and Renault's reconstruction theorem for $\mathrm{C}^*$-algebraic Cartan pairs \cite{R3}.

A Cartan subalgebra of a $\mathrm{C}^*$-algebra is a maximal abelian subalgebra which contains an approximate identity, is regular, and admits a faithful conditional expectation. A $\mathrm{C}^*$-diagonal additionally has the unique extension property for pure states \cite{A,K}. For a separable Cartan pair, the inclusion determines a twisted, topologically principal, \'etale groupoid; the groupoid is principal in the diagonal case \cite{K,R3}. This correspondence makes the inclusion itself an object of study: the spectrum records the unit space, whereas the normalizers also encode the arrows and their action. Consequently, isomorphism of the abelian subalgebras and conjugacy inside the ambient algebra are distinct classification problems. Throughout, conjugacy means conjugacy by an automorphism of the ambient algebra.

Concrete models illustrate the scope of this viewpoint. Finite-dimensional approximations and Bratteli diagrams underlie the canonical diagonals of AF algebras \cite{B,R2}, while directed graphs provide another source of groupoid $\mathrm{C}^*$-algebras \cite{KPRR}. For Cantor minimal systems, orbit equivalence interacts with ordered $K$-theory and crossed products \cite{GPS}. More generally, continuous orbit equivalence of topologically free actions can be expressed through isomorphisms preserving the canonical Cartan subalgebras \cite{L}. These examples motivate both the search for Cartan subalgebras and the classification of their embeddings.

There is also a close connection with the structure of nuclear $\mathrm{C}^*$-algebras. The Universal Coefficient Theorem of Rosenberg and
Schochet \cite{RS} holds for amenable groupoid $\mathrm{C}^*$-algebras by work of Tu \cite{T}. Barlak and Li extended this connection to Cartan pairs and obtained reformulations of the UCT problem \cite{BL,BL2}. Alongside the advances in classification, quasidiagonality, and nuclear dimension
\cite{C,EGLN,TWW}, Li proved that every classifiable simple $\mathrm{C}^*$-algebra has a Cartan subalgebra \cite{L2}. Thus existence is established throughout the classifiable setting, while the classification of the resulting pairs remains a finer problem than classification of the ambient algebras.

Dimension drop interval algebras provide a natural setting for an explicit study of this problem. They consist of matrix-valued functions on an interval with prescribed smaller matrix algebras at the endpoints, and belong to the class of one-dimensional noncommutative CW complexes \cite{ELP}. In the usual algebra $I_{r,s}$, the interior fibre is $M_r\otimes M_s$, with endpoint fibres $M_r\otimes1_s$ and $1_r\otimes M_s$. When $r$ and $s$ are coprime, these are the prime dimension-drop building blocks used to construct the Jiang--Su algebra
$\mathcal Z$ \cite{JS}. Tensorial absorption of $\mathcal Z$ is central to regularity and classification; see \cite{C,RW,TW}.

The related dimension drop blocks with scalar values at both endpoints occur in AD algebras. Their inductive limits and the role of $K$-theory with coefficients were studied by Dadarlat, Dadarlat--Loring, and Eilers--Toms \cite{D,DL,ET}. One-dimensional noncommutative CW complexes also admit classification results using the Cuntz semigroup, under the relevant $K_1$-vanishing assumptions \cite{R4}. From the viewpoint of Cartan inclusions, these building blocks retain a particularly accessible geometry: diagonal strands in the interior are identified at the endpoints. The boundary identifications therefore influence both the spectrum and the allowable conjugacies.

Li and Renault analysed Cartan subalgebras of homogeneous algebras using principal bundles and established several nonuniqueness results \cite{LR2}. Barlak and Raum treated non-degenerate Cartan subalgebras of dimension drop algebras, relating conjugacy to finite combinatorial data and spectral topology \cite{BR}. At the level
of simple algebras, the dynamical realization of $\mathcal Z$ by Deeley--Putnam--Strung \cite{DPS} supplies another source of Cartan models. For classifiable stably finite unital algebras with torsion-free $K_0$ and trivial $K_1$, Li constructed continuum many nonconjugate C\textsuperscript{*}-diagonals with Menger-curve spectra \cite{L3}. Mitscher and Spielberg produced Cartan subalgebras in AF algebras that are isomorphic but not conjugate to the standard diagonals \cite{MS}.

Passing from building blocks to limits requires control of the connecting maps as well as the Cartan subalgebras. Inductive limit constructions for Cartan pairs were developed in \cite{BL2,L2}, and Austin--Mitra constructed groupoid models for entire inductive systems \cite{AM}. Li and Raad obtained $\mathrm{C}^*$-diagonals in broad classes of AH algebras, including examples beyond the classifiable setting
\cite{LR}. Raad further described spectral components and studied uniqueness for generalized diagonal connecting maps \cite{R}. These developments motivate an explicit analysis of which finite stage features survive in the limit and which distinguish embeddings with the same spectrum.

We study these questions for  dimension drop interval algebras
\[
M_m(\widetilde{\mathbb I}_p)
=\{f\in C([0,1],M_p\otimes M_m):
f(0),f(1)\in1_p\otimes M_m\}.
\]
A Cartan subalgebra is non-degenerate if both endpoint fibres have
dimension $m$. Put $\underline q=\{1,\ldots,q\}$ and write
$K_{p,m}=\operatorname{Sym}(p)\times\operatorname{Sym}(m)$, acting coordinatewise on
$\underline p\times\underline m$. Our first result gives a permutation model and its
conjugacy relation.

\begin{maintheorem}[See Theorems~\ref{biaozhunxing}, \ref{fenlei}, and~\ref{feituihua}]
Every non-degenerate Cartan subalgebra of $M_m(\widetilde{\mathbb I}_p)$ is conjugate to a twisted standard Cartan subalgebra $B_\sigma$, where $\sigma\in\operatorname{Sym}(\underline p\times\underline m)$. Moreover,
\[
B_\sigma\text{ and }B_\pi\text{ are conjugate}
\quad\Longleftrightarrow\quad
\sigma\in K_{p,m}\pi K_{p,m}\,\cup\,K_{p,m}\pi^{-1}K_{p,m}.
\]
Every Cartan subalgebra of $M_m(\widetilde{\mathbb I}_p)$ is non-degenerate if and only if $\gcd(m,p)=1$.
\end{maintheorem}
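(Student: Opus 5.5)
I would prove the three assertions in order: normal form, conjugacy criterion, non-degeneracy criterion.

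\textbf{Normal form.} Let $B\subseteq A=M_m(\widetilde{\mathbb I}_p)$ be a Cartan subalgebra. I would first use that $B$ restricts fibrewise to a masa $B(t)\subseteq M_p\otimes M_m$ for $t\in(0,1)$. Over the open interval, Cartan subalgebras of $C_0((0,1),M_{pm})$ are locally trivial, since principal $\mathrm{PU}$-bundles over a contractible space are trivial (as in Li--Renault). Hence there is a continuous unitary path $u_t$ with $u_tB(t)u_t^*=D_{pm}$, the diagonal, indexed by $\underline p\times\underline m$. Non-degeneracy says the endpoint fibres $B(0),B(1)$ are $m$-dimensional masas of $1_p\otimes M_m$. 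After conjugating by a unitary of $1_p\otimes M_m$ at each end, I may assume $B(0)=B(1)=1_p\otimes D_m$. The limits of $u_t$ as $t\to0,1$ then exist modulo the normaliser of $D_{pm}$. The normaliser acts on the minimal projections by a permutation. At $t=0$ this permutation is absorbed into the choice of $u$. At $t=1$ the mismatch is a permutation $\sigma\in\operatorname{Sym}(\underline p\times\underline m)$, which records how the $p$ strands over each endpoint point $j\in\underline m$ are reconnected. Conjugating by the path $u_t$ (adjusted near the ends so that it lies in $A$) transforms $B$ into the twisted standard $B_\sigma$. The delicate point is continuity at the endpoints. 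I would handle it by working with the eigenprojection paths of $B$ rather than with $u$ directly.

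\textbf{Conjugacy.} Automorphisms of $A$ are of two kinds. Some fix the interval endpoints; these are given by unitary paths $w_t$ in $M_{pm}$ with $w_0,w_1\in(U_p\otimes 1)(1\otimes U_m)$ up to the appropriate normaliser. Others compose with the flip $t\mapsto1-t$, which replaces $\sigma$ by $\sigma^{-1}$. An endpoint-fixing automorphism carrying $B_\pi$ to $B_\sigma$ acts at each end by an element of the normaliser that preserves both $1_p\otimes D_m$ and the diagonal. Modulo the diagonal unitaries, these elements are exactly $K_{p,m}$, acting through $\operatorname{Sym}(p)$ on strands and $\operatorname{Sym}(m)$ on endpoint points. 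Because the interior is connected, the two end permutations $k_0,k_1$ satisfy $\sigma=k_1\pi k_0^{-1}$. This gives $\sigma\in K_{p,m}\pi K_{p,m}$. Conversely, any $k_0,k_1$ are realised by a unitary path, because $U(pm)$ is connected. This yields the union with $K_{p,m}\pi^{-1}K_{p,m}$. I expect this to be the main obstacle: checking that \emph{every} automorphism arises this way, including the possibility of nontrivial homeomorphisms of $[0,1]$ and of inner-like twists at the endpoints. Then, for an endpoint-fixing automorphism, I must verify that only the classes in $K_{p,m}$ survive at the ends.

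\textbf{Non-degeneracy and $\gcd(m,p)=1$.} A degenerate $B$ has an endpoint fibre, say $B(0)$, that is a proper abelian subalgebra of $1_p\otimes M_m$ of dimension $<m$. Its minimal projections $e_i$ then have ranks $pr_i$ with $\sum r_i=m$ and some $r_i\ge2$. Near $0$, the interior masa splits each $e_i$ into rank-one pieces. For $B$ to be maximal abelian and regular, each block $e_iM_{pm}e_i$ must carry a Cartan subalgebra of a smaller dimension drop algebra that is generic away from the endpoint. To construct a degenerate example when $d=\gcd(m,p)>1$, I would write $p=dp'$ and $m=dm'$. I would then find a Cartan subalgebra whose endpoint fibre is $1_p\otimes D$, where $D\subseteq M_m$ is a masa of $M_{m'}\otimes 1_d$ type tensored with the diagonal of $M_d$. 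Concretely, I would use the identification $1_p\otimes M_m\supseteq 1_{p'}\otimes(1_d\otimes M_d)\otimes M_{m'}$ and rotate the $M_d\otimes M_d$ factor along a path, as in Jiang--Su-type constructions, so that at the endpoint the relevant Cartan degenerates while remaining maximal abelian in $A$.

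For the converse, I would assume $\gcd(m,p)=1$ and count ranks. The interior masa is $pm$-dimensional. Each endpoint eigenprojection of $B(0)$ is a limit of sums of rank-one interior eigenprojections, and maximality forces these sums to form full orbits under the strand reconnection. The orbit lengths must be compatible with both the factor $p$ and the endpoint ranks. This forces $p\mid pr_i\cdot(\text{something coprime})$ along with divisibility by $r_i$ arising from the other endpoint. Coprimality should then force every $r_i=1$. I expect this counting to be routine once set up, with the maximality argument near the endpoints as the key input.
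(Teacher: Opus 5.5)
\textbf{Normal form.} This step breaks at your claim that ``the limits of $u_t$ as $t\to0,1$ then exist modulo the normaliser of $D_{pm}$.'' They need not exist. For $p\ge2$, take a continuous $v\colon(0,1]\to U(p)$ such that $v(t)D_pv(t)^*$ has no limit as $t\to0$; for $p=2$, rotation by the angle $1/t$ works. Since $v(t)\otimes1_m$ commutes with $1_p\otimes M_m$, the map $\operatorname{Ad}(v\otimes1_m)$ is an automorphism of $M_m(\widetilde{\mathbb I}_p)$. Hence $\operatorname{Ad}(v\otimes1_m)(B_{\mathrm{st}})$ is a non-degenerate Cartan subalgebra whose interior fibres do not converge. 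No admissible choice of $u_t$ then has a limit modulo the normaliser, and the rank-one eigenprojection paths oscillate as well, so switching to them does not help.

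Near an endpoint, only the sums of the $p$ strands over each endpoint character converge. The conjugating unitary in general lives only on $(0,\varepsilon]$; only its $\operatorname{Ad}$ extends to the endpoint. What is missing is the local normal form at each end, Theorem~\ref{danbianweiyi}:
\begin{enumerate}
\item lift the minimal projections $1_p\otimes e_{jj}$ of $B(0)$ into $B$ near $0$ by semiprojectivity;
\item use regularity to obtain normalising partial isometries $n_j=z_j\otimes e_{j1}$ with $z_j(0)=1_p$;
\item conjugate the single corner, which is a Cartan subalgebra of $J_{1,p}$, to the standard one by some $q\in C_b((0,\varepsilon],U(p))$ (Lemma~\ref{danbian});
\item take $U=\sum_j qz_j^*\otimes e_{jj}$.
\end{enumerate}
The $n_j$ are essential. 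With independent corner unitaries $q_j$, the element $1_p\otimes e_{jk}$ is sent to $q_jq_k^*\otimes e_{jk}$, which need not converge, whereas $qz_j^*z_kq^*\to1_p$. Only after $B$ is standard near both ends does \cref{qici} on the middle interval produce $\sigma$.

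\textbf{Conjugacy.} Your realisation of $K_{p,m}\pi K_{p,m}$ by unitary paths and your use of the flip for $\pi^{-1}$ match the paper. The necessity direction has two problems.

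First, the example above shows that automorphisms need not come from unitary paths continuous on $[0,1]$, so there is no endpoint value $w_0$ to work with.

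Second, the group you name, namely elements preserving $1_p\otimes D_m$ and the diagonal, is modulo diagonal unitaries the set of all permutations $(i,j)\mapsto(a_j(i),b(j))$. For $p,m\ge2$ this is strictly larger than $K_{p,m}$; for instance it contains $s\otimes e_{11}+1_2\otimes e_{22}$. Using it reproduces the error in \cite[Theorem~5.10]{BR} and gives two classes instead of five for $m=p=2$.

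The needed step is as follows. Normalise the induced homeomorphism via Lemma~\ref{zhongxin}. On $(0,1/3]$ the automorphism is then implemented by monomials with a constant permutation part $\rho_0$. Norm convergence of $\alpha_t(1_p\otimes e_{jk})=\sum_ic_i(t)e_{a_j(i),a_k(i)}\otimes e_{b(j),b(k)}$ into $1_p\otimes M_m$ forces $a_j=a_k$, which gives $\rho_0\in K_{p,m}$.

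\textbf{The $\gcd$ criterion.} Neither direction contains the actual mechanism.

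For the degenerate example, the endpoint masa must be \emph{orthogonal} to the endpoint copy of $M_d$. This ensures the fibrewise expectation maps $1_p\otimes M_m$ into $B(0)$ and that endpoint normalisers still span $1_p\otimes M_m$; this is the construction of \cite[Examples~4.3--4.4]{BR}. A rotation that merely keeps $B$ maximal abelian does not yield a Cartan subalgebra.

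For the converse, strands have no limits at a degenerate endpoint, so a ``strand reconnection'' count is unavailable. Two facts are needed:
\begin{enumerate}
\item All minimal projections of $B(0)$ have a common rank $r$ in $M_m$, so $r\mid m$. This holds because endpoint values of normalisers span $1_p\otimes M_m$ and can only link minimal projections of equal rank, as in Lemma~\ref{duandian}.
\item $r\mid p$. For example, the $g\in U(r)$ with $1_p\otimes g$ normalising a limit masa $\lim B(t_j)$ form, modulo scalars, a group of order $r^2$ acting freely on its $pr$ minimal projections in the corner.
\end{enumerate}
This divisibility is what the paper takes from \cite[Theorem~4.5]{BR}.
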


The endpoint permutations must be compatible with the full endpoint matrix
algebras. This requirement determines the subgroup $K_{p,m}$; inversion
accounts for reversal of the interval. Spectra have a simpler description
through the reduced matrix, which counts how the diagonal strands join
the endpoint characters. We next consider Cartan subalgebras up to abstract isomorphism.

\begin{samepage}
\begin{maintheorem}[See Proposition~\ref{m2}]
For every $n\geq1$, the  subhomogeneous $\mathrm{C}^*$-algebra $M_n(\widetilde{\mathbb I}_2)$ has exactly $n$ non-degenerate Cartan subalgebras up to isomorphism as abelian $\mathrm{C}^*$-algebras.

\end{maintheorem}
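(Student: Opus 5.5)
The plan is to reduce the isomorphism question to a topological one via Gelfand duality, and then to compute the spectra of the twisted standard Cartan subalgebras $B_\sigma$ explicitly as finite graphs. By Theorem~\ref{biaozhunxing}, every non-degenerate Cartan subalgebra of $M_n(\widetilde{\mathbb I}_2)$ is conjugate to some $B_\sigma$ with $\sigma\in\operatorname{Sym}(\underline 2\times\underline n)$. Conjugate subalgebras are in particular isomorphic. Two unital abelian $\mathrm{C}^*$-algebras are isomorphic exactly when their spectra are homeomorphic. So it suffices to determine the homeomorphism types of $\widehat{B_\sigma}$ as $\sigma$ ranges over $\operatorname{Sym}(\underline 2\times\underline n)$, and to show there are exactly $n$ of them.

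\textbf{Step 1: the spectrum as a multigraph.} Using the description of spectra through the reduced matrix and finite bipartite multigraphs, I would identify $\widehat{B_\sigma}$ as follows. In the interior $(0,1)$ there are $2n$ diagonal strands indexed by $\underline 2\times\underline n$. At $t=0$ the endpoint fibre $1_2\otimes M_n$ has $n$ diagonal characters $a_1,\dots,a_n$, and the strands $(1,j)$ and $(2,j)$ both converge to $a_j$. At $t=1$ there are characters $b_1,\dots,b_n$, and strand $x$ converges to $b_k$, where $k$ is the second coordinate of $\sigma(x)$. Hence $\widehat{B_\sigma}$ is the geometric realization of a bipartite multigraph $\Gamma_\sigma$ with
\begin{itemize}
\item $n$ vertices on each side,
\item $2n$ edges, one for each strand,
\item every vertex of degree exactly $2$, because both $a_j$ and $b_k$ receive exactly $p=2$ strands.
\end{itemize}
The step I expect to need the most care is checking that the quotient topology at the endpoints is exactly that of the graph realization. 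Concretely, a neighbourhood of $a_j$ must consist of half-open initial segments of the two strands ending at $a_j$. This should follow from non-degeneracy together with the paper's spectral description.

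\textbf{Step 2: classification of such graphs.} A finite $2$-regular multigraph is a disjoint union of cycles, where double edges count as $2$-cycles. Its realization is therefore a disjoint union of $c$ circles, and $c$ is a complete homeomorphism invariant, for instance via the number of connected components. The graph $\Gamma_\sigma$ is bipartite, so each cycle alternates between the two sides. It therefore contains at least one $a$-vertex, which gives $1\le c\le n$.

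\textbf{Step 3: realization of every value.} To show every $c\in\{1,\dots,n\}$ occurs, I would choose $\tau\in\operatorname{Sym}(n)$ with exactly $c$ cycles. Define $\sigma(1,j)=(1,j)$ and $\sigma(2,j)=(2,\tau(j))$. Then the edges of $\Gamma_\sigma$ are $a_j b_j$ and $a_j b_{\tau(j)}$, and the components of $\Gamma_\sigma$ correspond bijectively to the cycles of $\tau$. Thus $\widehat{B_\sigma}$ is a disjoint union of exactly $c$ circles.

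Combining the three steps gives exactly $n$ isomorphism classes of non-degenerate Cartan subalgebras.
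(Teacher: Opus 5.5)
Your proposal is correct and follows essentially the same route as the paper. You reduce to $B_\sigma$ via Theorem~\ref{biaozhunxing}, identify $\operatorname{Spec}(B_\sigma)$ with the realization of a $2$-regular bipartite multigraph, and count its cycle components, which lie between $1$ and $n$. Theorems~\ref{pu} and~\ref{putu} already settle the endpoint-topology point you flagged. Your explicit twist $\sigma(1,j)=(1,j)$, $\sigma(2,j)=(2,\tau(j))$, with $\tau$ having exactly $c$ cycles, shows that every value of $c$ is realized, which the paper's proof leaves implicit in Theorem~\ref{keda}.
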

\end{samepage}

However, two Cartan subalgebras may be isomorphic without being conjugate. For example, $M_2(\widetilde{\mathbb I}_2)$ has five conjugacy classes of non-degenerate Cartan subalgebras but only two isomorphism classes; see Proposition~\ref{wuge} and Remark~\ref{liangzhong}. We also revisit the finite enumeration question considered in \cite{BR} and give an explicit construction using splitting interval algebras. For $1\leq a<b$, the splitting interval algebra with interior fibre $M_{a+b}$ and endpoint algebras $M_a\oplus M_b$ has exactly $a+1$ Cartan conjugacy classes.
\begin{maintheorem}[See Theorem~\ref{zhiding}]
For every $n\geq1$, there exists a subhomogeneous $\mathrm{C}^*$-algebra with exactly $n$ Cartan subalgebras up to conjugacy. All of its Cartan subalgebras are mutually isomorphic and are $\mathrm{C}^*$-diagonals.
\end{maintheorem}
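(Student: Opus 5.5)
The plan is to take $a=n-1$ and $b=n$ (or any $b>a=n-1$, with $n\ge2$) and use the splitting interval algebra
\[
S_{a,b}=\{f\in C([0,1],M_{a+b}) : f(0),f(1)\in M_a\oplus M_b\},
\]
where $M_a\oplus M_b$ sits block-diagonally. For $n=1$ a homogeneous algebra such as $C([0,1])$ or $M_k$ suffices, since it has a unique Cartan subalgebra up to conjugacy. Three things must be verified for $S_{a,b}$: that there are exactly $a+1$ conjugacy classes of Cartan subalgebras, that all of them are isomorphic as abelian C$^*$-algebras, and that each is a C$^*$-diagonal.

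\textbf{Step 1: normal form.} Every Cartan subalgebra $B$ of $S_{a,b}$ should be conjugate to a twisted standard one. Over the open interval, $B$ is a continuous field of maximal abelian subalgebras of $M_{a+b}$. Since $(0,1)$ is contractible, the argument used for Theorem~\ref{biaozhunxing} (via the Li--Renault bundle description) lets us conjugate by a unitary path so that $B$ consists of diagonal functions in the interior. At each endpoint, $B$ restricts to a masa of $M_a\oplus M_b$, so every endpoint fibre automatically has full dimension $a+b$. This is why no degenerate case arises, and in this respect the argument is simpler than the dimension drop case. After a further conjugation by constant permutation matrices near the ends, the identification of the diagonal at $t=1$ with that at $t=0$ is encoded by a permutation $\sigma\in\mathrm{Sym}(a+b)$. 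We then get $B\cong B_\sigma$.

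\textbf{Step 2: conjugacy.} By the same analysis as in Theorem~\ref{fenlei}, $B_\sigma\sim B_\pi$ if and only if $\sigma\in K\pi K\cup K\pi^{-1}K$. Here $K=\mathrm{Sym}(a)\times\mathrm{Sym}(b)$ is the group of permutations preserving the endpoint block decomposition, and the inverse accounts for the flip $t\mapsto1-t$, which is an automorphism of $S_{a,b}$ because both endpoints have the same block structure. Double cosets $K\backslash\mathrm{Sym}(a+b)/K$ are classified by the single number $k=|\sigma(\{1,\dots,a\})\cap\{1,\dots,a\}|$. Given $a<b$, this number ranges over $0,\dots,a$, giving $a+1$ classes. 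Since $k(\sigma^{-1})=k(\sigma)$, inversion does not merge any classes, so there are exactly $a+1=n$ conjugacy classes.

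\textbf{Step 3: isomorphism and the diagonal property.} The spectrum of $B_\sigma$ is the union of $a+b$ closed interval strands, glued at $t=0$ and $t=1$ according to $\sigma$. There are no endpoint identifications, since each endpoint character of the masa is a distinct point. The result is a disjoint union of circles and arcs, determined by the cycle structure of $\sigma$ viewed as gluing the strand ends. Since the endpoint fibres are full masas, no two strands are glued at an endpoint; each strand's $t=1$ end is identified with the $t=0$ end of strand $\sigma$ of it only up to the reparametrization. I expect this to be the main obstacle: one must check carefully what the twisting actually identifies. If the gluing makes the spectrum a fixed space independent of $\sigma$ (for instance, $a+b$ disjoint closed intervals, with $\sigma$ only affecting the groupoid), then all $B_\sigma$ are isomorphic to $C([0,1])^{a+b}$ and isomorphism is immediate.

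For the C$^*$-diagonal property, I would verify that the associated Weyl groupoid is principal. The only isotropy could arise at endpoint characters, and there the fibre inclusion of a masa in $M_a\oplus M_b$ is a diagonal with the unique extension property. Equivalently, every pure state of each fibre restricted to the masa extends uniquely, so Kumjian's criterion applies fibrewise and hence to $B_\sigma\subset S_{a,b}$.
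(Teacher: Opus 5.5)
Your Step~1 has a genuine gap: the sentence asserting that at each endpoint $B$ restricts to a masa of $M_a\oplus M_b$. This is not automatic. A priori $B(0)$ is only an abelian subalgebra of $F_{a,b}$; this is exactly how degenerate Cartan subalgebras of $M_m(\widetilde{\mathbb I}_p)$ arise when $\gcd(m,p)>1$. Your argument never uses $a<b$ at this point, and the claim fails for $a=b$.

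For a counterexample, work in $\{f\in C([0,1],M_2):f(0),f(1)\in D_2\}$. Take a unitary path $u$ equal to $h=\frac{1}{\sqrt2}\left(\begin{smallmatrix}1&1\\1&-1\end{smallmatrix}\right)$ near $0$ and to $1$ near $1$, and let $B$ consist of the $f$ with $f(t)\in u_tD_2u_t^*$ for all $t$. Then:
\begin{enumerate}
\item $B(0)=D_2\cap hD_2h^*=\mathbb C1$;
\item $B$ is maximal abelian, and $\operatorname{Ad}u\circ E_{D_2}\circ\operatorname{Ad}u^*$ is a faithful expectation onto it;
\item $B$ is regular: near $0$ use cut-offs of $1$ and of $u_tsu_t^*$, where $s$ is the flip and $hsh^*=\operatorname{diag}(1,-1)$.
\end{enumerate}
Its spectrum is connected, so $B\not\cong C([0,1])^{2}$. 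The character at $t=0$ also has two pure extensions, so $B$ is not a $\mathrm{C}^*$-diagonal.

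Non-degeneracy is therefore the real content of the theorem, and it is what Lemma~\ref{duandian} proves. The identity $E(\widehat x)(0)=\lim_j E(\widehat x)(t_j)$ shows that the endpoint expectation equals $E_D|_{F_{a,b}}$ for some masa $D\supseteq B(0)$ of $M_{a+b}$, so it is trace-preserving. This forces every minimal projection of $B(0)$ to have rank profile $(1,0)$, $(0,1)$ or $(1,1)$. Regularity then shows that a $(1,1)$ projection cannot be linked to the other profiles, so all would be $(1,1)$ and $a=b$. Without this lemma, your normal form, the count of $a+1$ classes, the isomorphism claim and the fibrewise unique-extension argument all lack their hypothesis.

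Step~3 is also unresolved as written. The ``circles and arcs'' picture is wrong, and your hedged alternative is the correct one. Once every fibre $B(t)$, $t\in[0,1]$, is an $N$-dimensional masa of $M_N$, the argument goes as follows. Local spectral projections give continuous rank-one projection sections. The minimal projections then form an $N$-sheeted covering of $[0,1]$, which is trivial, so $B\cong C([0,1])^{\oplus N}$ by Lemma~\ref{lianxu}. No strand end at $t=1$ is glued to one at $t=0$, since the centre $C([0,1])\subseteq B$ separates them. The twist only records which endpoint block each strand meets.

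These strands also repair the endpoint issue in your Step~1. The unitary from Theorem~\ref{qici} lies in $C_b((0,1),U(N))$ and need not extend to the endpoints. By contrast, $v(t)=\sum_j\eta_j(t)\xi_j(t)^*$, built from unit vectors in the strands, is a unitary of $S_{a,b}$. It yields the classification by the matching matrix $R(B)$ directly (Proposition~\ref{wanbei}), which agrees with your double-coset count.

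Your choice of $C([0,1])$ or $M_k$ for $n=1$ is fine. It avoids the appeal to Remark~\ref{duijiao} that the paper needs for $\widetilde{\mathbb I}_p$.
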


Our inductive limit results concern a fixed unital AD algebra $\mathcal{A}$. We construct Cartan subalgebras in $\mathcal{A}$ whose spectra are inverse limits of finite unions of circles. We identify the resulting space with the dense amalgam of
circles, denoted by $\widetilde{\sqcup}(S^1)$, in the sense of \'Swi\k{a}tkowski \cite{S2}.

\begin{maintheorem}[See Corollary~\ref{wuxian} and Theorem~\ref{michou}]
The algebra $\mathcal{A}$ contains infinitely many mutually isomorphic but pairwise nonconjugate Cartan subalgebras, all of which are $\mathrm{C}^*$-diagonals and have spectrum homeomorphic to $\widetilde{\sqcup}(S^1)$.
\end{maintheorem}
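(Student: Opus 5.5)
We will build $\mathcal A$ as an inductive limit $\varinjlim (A_k,\varphi_k)$ with $A_k=M_{m_k}(\widetilde{\mathbb I}_{p_k})$, and choose the data so that $\gcd(m_k,p_k)=1$. Then, by the Theorem of \cref{feituihua}, every Cartan subalgebra at every stage is non-degenerate and conjugate to some $B_\sigma$. The plan is to define, for each $j\in\mathbb N$, a coherent sequence of permutations $\sigma^{(j)}_k\in\operatorname{Sym}(\underline{p_k}\times\underline{m_k})$. The connecting maps $\varphi_k$ should be generalized diagonal maps: direct sums of point-evaluations at the endpoints together with the maps $f\mapsto f\circ\lambda$, where $\lambda(t)=t$, $1-t$, or a folding map $[0,1]\to[0,1]$. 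These maps must carry $B_{\sigma^{(j)}_k}$ into $B_{\sigma^{(j)}_{k+1}}$ and map normalizers to normalizers. Following \cite{BL2,L2,LR}, the limit $D_j=\varinjlim B_{\sigma^{(j)}_k}$ is then a Cartan subalgebra. It is a $\mathrm{C}^*$-diagonal because each twisted groupoid at finite stage is principal (the strands form an equivalence relation on circles) and principality passes to the limit of the Weyl groupoids. Simplicity, real rank zero, stable rank one and the $K$-theory ($K_0\cong\mathbb Z[1/3]$, $K_1\cong\mathbb Z/2$) are taken from the construction of $\mathcal A$ earlier in the paper. I would arrange that the multiplicities of the maps are controlled by powers of $3$ and the dimension drops by $2$.

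For the spectra, the spectrum of $B_\sigma$ is a finite graph. Its diagonal strands are intervals glued at endpoint characters according to the cycles of $\sigma$, which gives a finite union of circles whose component count is read off from the reduced matrix. The spectrum of $D_j$ is then the inverse limit of finite disjoint unions of circles, with bonding maps given by the duals of the connecting maps. To identify it with $\widetilde{\sqcup}(S^1)$, I would verify Świątkowski's characterization: a compact metric space in which every component is a circle, the nondegenerate components are dense, every open set contains a component, and the space of components is a Cantor set. The folding maps split circles into new circles densely at every stage, and this is what gives density. Since the characterization is topological, all $D_j$ are homeomorphic and hence mutually isomorphic.

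For nonconjugacy I would use an invariant of the Weyl groupoid $G_j$ of $(\mathcal A,D_j)$, namely the multiset or set of orbit cardinalities of points of $\widehat{D_j}$. At stage $k$ the orbit of a character under the normalizers of $B_\sigma$ has size determined by the cycle lengths of $\sigma$ at the endpoints, which is exactly the double coset data of the Theorem of \cref{fenlei}. I choose $\sigma^{(j)}$ so that some orbit has length tied to $j$, for instance a distinguished endpoint orbit whose normalized cardinality grows like $2j$ relative to the matrix size. This gives invariants that stabilize in the limit. Conjugacy by an automorphism of $\mathcal A$ induces an isomorphism of twisted groupoids by Renault's theorem \cite{R3}, so it preserves the orbit-multiplicity invariant, and $j\neq j'$ therefore gives nonconjugate diagonals.

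I expect the main obstacle to be the compatibility of the connecting maps: they must simultaneously produce the prescribed $K$-theory and simplicity, preserve each twisted standard Cartan subalgebra $B_{\sigma^{(j)}_k}$ together with its normalizers, and keep the orbit invariant both well defined and distinct in the limit, since multiplicities can blow up under the maps. I would first try to fix a single connecting map pattern, shared by all $j$, that commutes with the permutation data on a finite "marker" block. I would then vary $\sigma^{(j)}$ only on this block and check the invariant there directly.
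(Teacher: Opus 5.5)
There are two genuine gaps. The first is the conjugacy invariant: raw orbit cardinalities cannot separate anything here.

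At a finite stage, the Weyl orbit of a character of height $t\in(0,1)$ is the entire fibre over $t$: all $2d_n$ characters, or $d_n$ at heights $0,1$. Normalizers commute with the centre $C([0,1])$, so they preserve height. Conversely, the cut-down matrix units $\chi(s)U_n(s)e_{ji}U_n(s)^*$ with $\chi\in C_c((0,1))$ normalize $B_n^\sigma$ and connect any two characters of equal height. This is independent of $\sigma$, so it is not ``the double coset data''.

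These normalizers also survive in the limit, so every Weyl orbit in $\operatorname{Spec}(D_j)$ is infinite. A count ``normalized by the matrix size'' is not an invariant of the pair $(\mathcal A,D_j)$.

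The missing idea is to localize orbits to connected components. For a circle component $T$, put $\mu_B(T)=\sup_{\xi\in T}\#\bigl((\mathcal G(\mathcal A,B)\cdot\xi)\cap T\bigr)$. An isomorphism of Cartan pairs carries Weyl orbits to Weyl orbits and components to components, so $\mathfrak M(B)=\{\mu_B(T)\}$ is a conjugacy invariant. One gets $\mu_B(T)=2r$ for a persistent order-$r$ circle.

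You then need only one twist, at a single stage $N$ with $d_N\geq r$. Take its reduced matrix to be $(I_r+P_r)\oplus 2I_{d_N-r}$ with $P_r$ an $r$-cycle; such a twist exists by Theorem~\ref{keda}. Propagate it by $U_{n+1}=V_n\operatorname{diag}(U_n(x_n),U_n(t),U_n(x_n))V_n^*$. This gives $\mathfrak M=\{2,2r\}$, with no coherent family of twists at every stage.

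The second gap is the connecting maps. The statement concerns the fixed algebra $\mathcal A=\varinjlim(M_{3^n}(\widetilde{\mathbb I}_2),\varphi_n)$ with $\varphi_n(f)(t)=V_n\operatorname{diag}(f(x_n),f(t),f(x_n))V_n^*$ and $(x_n)$ dense in $(0,1)$. You cannot substitute your own maps and still import its simplicity and $K$-theory. Your maps would also destroy the spectral picture.

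A folding summand along a persistent thread makes the bonding map on that component non-injective. The limiting component is then an inverse limit of circles under folds, which is in general not a circle. Folding does not ``split circles into new circles''.

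In the paper the only variable summand is $f(t)$, so each circle has a successor on which the bonding map is a height-preserving homeomorphism. The evaluation summands contribute the block $2I_{2d_n}$ in $R_{n+1}=R_n\oplus2I_{2d_n}$: new order-one circles, each collapsed to a point at height $x_n$. This collapsing yields nullness and the clopen saturated separation in Lemma~\ref{dense}, and density of $(x_n)$ yields empty interiors.

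Evaluations only at the endpoints would create new circles only over heights $0$ and $1$. If, moreover, your maps $\lambda$ preserve $\{0,1\}$, the ideal of functions vanishing at both endpoints survives to the limit, so the limit would not be simple.

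Finally, the characterization you quote is wrong. $\widetilde{\sqcup}(S^1)$ has singleton components, coming from threads that pass through point-evaluation successors infinitely often, so ``every component is a circle'' cannot be verified.
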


The algebra $\mathcal{A}$ satisfies $K_1(\mathcal{A})\cong\mathbb Z/2\mathbb Z$.
Thus it lies outside the scope of \cite[Theorem~1.4]{L3}, which assumes torsion-free $K_0$ and trivial $K_1$. Here the common spectrum is the dense amalgam of circles. To distinguish the resulting inclusions, we use the numbers of
points at which Weyl groupoid orbits meet persistent circle components. The resulting conjugacy invariant satisfies $\mathfrak M(B^{[r]})=\{2,2r\}$ for $r\geq1$.

Finally, the $\mathcal Z$-stability of $\mathcal{A}$ allows us to apply the tensor product method of \cite{LR2} and vary the covering dimensions of Cartan spectra.

\begin{maintheorem}[See Corollary~\ref{butonggou}]
The same algebra $\mathcal{A}{}$ contains infinitely many Cartan subalgebras that
 are pairwise nonisomorphic as abelian $\mathrm{C}^*$-algebras.
\end{maintheorem}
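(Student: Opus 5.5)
The plan follows the tensor product method of Li--Renault \cite{LR2} and relies on the $\mathcal Z$-stability of $\mathcal A$. The aim is an isomorphism $\mathcal A\cong\mathcal A\otimes D$ for suitable algebras $D$, each of which carries a Cartan subalgebra whose spectrum has prescribed covering dimension. Tensoring with one of the $\mathrm{C}^*$-diagonals $B$ of $\mathcal A$ constructed earlier then produces Cartan subalgebras whose spectra differ in dimension.

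\textbf{Step 1.} I first record that $\mathcal A$ is $\mathcal Z$-stable. It is a unital simple AD algebra of real rank zero with slow dimension growth, so $\mathcal A\cong\mathcal A\otimes\mathcal Z$ by the classification results cited in the introduction.

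\textbf{Step 2.} For each $d\geq0$, I choose a unital, simple, separable, nuclear, $\mathcal Z$-stable algebra $D_d$ in the UCT class with $K_0(D_d)\cong\mathbb Z$ (class of the unit $[1]$), $K_1(D_d)=0$ and a unique trace, such that $D_d\cong\mathcal Z$ but $D_d$ has a Cartan subalgebra $C_d$ whose spectrum has covering dimension exactly $d$. The natural candidate is an inductive limit of dimension drop algebras $I_{r,s}$ built over $[0,1]^d$-type bases, or a groupoid model. Alternatively, following \cite{LR2}, I would realise $\mathcal Z\cong\mathcal Z\otimes\mathcal Z^{\otimes d}$ with the Cartan subalgebra $C^{\otimes}$ coming from the Deeley--Putnam--Strung model \cite{DPS}. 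There the spectrum is a product of $d$ one-dimensional spaces, and its dimension is exactly $d$ by the product theorem for compacta with suitable cohomological properties.

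\textbf{Step 3.} Let $B$ be a $\mathrm{C}^*$-diagonal of $\mathcal A$ from Corollary~\ref{wuxian}. Then $B\otimes C_d$ is a Cartan subalgebra of $\mathcal A\otimes D_d\cong\mathcal A$. Tensor products of Cartan pairs are Cartan: the conditional expectation is $E_B\otimes E_{C_d}$, regularity follows from elementary tensors of normalizers, and maximal abelianness follows from the tensor product of the twisted groupoids being topologically principal. The spectrum of $B\otimes C_d$ is $\widetilde{\sqcup}(S^1)\times\widehat{C_d}$.

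\textbf{Step 4.} I compute the covering dimension of $\widetilde{\sqcup}(S^1)\times\widehat{C_d}$ and show that it is $d+1$, or at least that it is pairwise distinct as $d$ varies. Since covering dimension is a homeomorphism invariant, the Cartan subalgebras obtained for different $d$ are pairwise nonisomorphic as abelian $\mathrm{C}^*$-algebras.

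\textbf{Main obstacle.} I expect the hardest part to be the exact dimension count in Step 4. The product formula $\dim(X\times Y)=\dim X+\dim Y$ can fail for compacta. I would first try choosing $\widehat{C_d}$ to be a genuine manifold-like or polyhedral inverse limit with nontrivial integral Čech cohomology in top degree. The dense amalgam $\widetilde{\sqcup}(S^1)$ has $\check H^1\neq0$ at some point, so the Künneth formula forces $\check H^{d+1}\neq0$ locally, which gives $\dim\geq d+1$. The upper bound follows from $\dim(X\times Y)\leq\dim X+\dim Y$. If that approach fails, a fallback is to show only that the dimensions $\geq d$ are unbounded along a subsequence, which already yields infinitely many pairwise nonisomorphic Cartan subalgebras.
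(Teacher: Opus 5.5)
Your proposal is correct and follows essentially the paper's proof. You use the $\mathcal Z$-stability of $\mathcal A$ and take Cartan subalgebras $D_j\subseteq\mathcal Z$ from \cite{LR2} whose spectra $Y_j$ have finite, unbounded covering dimension; you then distinguish the Cartan subalgebras $B\otimes D_j$ of $\mathcal A\otimes\mathcal Z\cong\mathcal A$ by $\dim(X\times Y_j)$. The exact dimensions in Steps~2 and~4 (spectrum of dimension exactly $d$, product of dimension exactly $d+1$ via K\"unneth) are not needed: the paper uses precisely your fallback, namely $e_j\le\dim(X\times Y_j)\le e_j+1$ with $\dim X=1$, along a subsequence satisfying $e_{j+1}>e_j+1$.
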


Section~\ref{yubei} recalls the preliminaries. Section~\ref{youxian} proves the finite stage classification and spectral results. Section~4 treats prescribed finite numbers of conjugacy classes, and Section~\ref{guina} develops the inductive-limit constructions.

\section{Preliminaries}\label{yubei}

Throughout, $\mathbb{N}$, $\mathbb{Z}$, $\mathbb C$, and $\mathbb{T}$ denote the positive integers, the integers, the complex numbers, and the unit circle.  For a finite set $X$, let $\operatorname{Sym}(X)$ be its permutation group.  We write $\underline n=\{1,\ldots,n\}$, $\operatorname{Sym}(n)=\operatorname{Sym}(\underline n)$. For positive integers $r,s$, let $\operatorname{pr}_2:\underline r\times\underline s\to\underline s$
denote projection onto the second coordinate; thus $\operatorname{pr}_2(i,j)=j$, and identify a permutation with its permutation matrix. The standard matrix units are denoted by $e_{ij}$.

\begin{definition}
Let $B\subseteq A$ be $\mathrm{C}^*$-algebras. The normalizer of $B$ in $A$ is
\[
 N_A(B)=\{x\in A:x^*Bx\subseteq B\text{ and }xBx^*\subseteq B\}.
\]
The subalgebra $B$ is regular if $N_A(B)$ generates $A$ as a $\mathrm{C}^*$-algebra.
\end{definition}

\begin{definition}
A linear map $E:A\to B$ is a faithful conditional expectation if it is completely positive, $E|_B=\operatorname{id}_B$, $E(b_1ab_2)=b_1E(a)b_2$, and $E(a^*a)=0$ implies $a=0$.
\end{definition}

\begin{definition}[{\cite[Definition~5.1]{R3}}]
A $\mathrm{C}^*$-subalgebra $B\subseteq A$ is a Cartan subalgebra if
\begin{enumerate}
 \item $B$ is maximal abelian;
 \item $B$ contains an approximate identity for $A$;
 \item there is a faithful conditional expectation $A\to B$;
 \item $B$ is regular in $A$.
\end{enumerate}
The pair $(A,B)$ is then called a Cartan pair. Two Cartan subalgebras $B_1,B_2\subseteq A$ are conjugate if some $\alpha\in\operatorname{Aut}(A)$ satisfies $\alpha(B_1)=B_2$.
\end{definition}
Let $A$ be a commutative $\mathrm{C}^*$-algebra. Then its unique Cartan subalgebra is $A$ itself. The diagonal $D_q\subseteq M_q$ is a Cartan subalgebra. Its normalizer
consists of the matrices with at most one nonzero entry in each row and each column.

\begin{definition}
For $m,p\in\mathbb{N}$, set
\begin{equation*}
 M_m(\tilde{\mathbb{I}}_p)
 =\{f\in C([0,1],M_p\otimes M_m): f(0),f(1)\in1_p\otimes M_m\}.
\end{equation*}
A Cartan subalgebra $B\subseteq M_m(\tilde{\mathbb{I}}_p)$ is called non-degenerate if $\dim B_0=\dim B_1=m$.
\end{definition}

The centre of $M_m(\tilde{\mathbb{I}}_p)$ is $C([0,1])$. Hence an automorphism of $M_m(\tilde{\mathbb{I}}_p)$ induces a homeomorphism of $[0,1]$. Such a homeomorphism either fixes or interchanges the endpoints. Next we need a classification result for the special case of homogeneous $\mathrm{C}^*$-algebras over an interval.

\begin{theorem}[{\cite{LR2}}]\label{qici}
Let $I$ be an interval and let $A=C_0(I,M_m)$. Then $A$ admits a unique Cartan subalgebra up to conjugacy by an inner automorphism implemented by a unitary in its multiplier algebra $M(A)=C_b(I,M_m)$.
\end{theorem}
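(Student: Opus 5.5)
The plan is to reduce the problem to a fibrewise statement and then trivialise a principal bundle over the contractible base $I$.

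\emph{Step 1: $B$ contains the centre.} Let $B\subseteq A=C_0(I,M_m)$ be a Cartan subalgebra. The centre $C_0(I)\cdot 1_m$ acts on $A$ through the multiplier algebra and commutes with $B$. Since $B$ contains an approximate identity for $A$, maximal abelianness forces $C_0(I)B\subseteq B$. Hence $B$ is a $C_0(I)$-algebra, with fibres $B_x\subseteq M_m$ obtained by evaluation at $x\in I$.

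Moreover, the faithful conditional expectation $E$ is $C_0(I)$-linear, because $C_0(I)B\subseteq B$ and $E$ is $B$-bimodular. It therefore induces conditional expectations $E_x:M_m\to B_x$. Likewise, evaluating normalizers gives elements normalizing $B_x$, and these generate $M_m$.

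\emph{Step 2: each fibre is a diagonal.} I will show that every $B_x$ is a maximal abelian subalgebra of $M_m$, hence unitarily conjugate to $D_m$.

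First, $B_x$ is abelian. Suppose $B_x$ were not maximal. Then there would be $a\in M_m$ commuting with $B_x$ but not lying in $B_x$. Consider $a-E_x(a)$ and extend it to $a'=f\cdot(a-E_x(a))$ with $f\in C_0(I)$ concentrated near $x$. Using continuity of the fibres and the normalizers, I then want to produce a contradiction with the maximality of $B$ or with faithfulness of $E$.

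This is the step I expect to be the main obstacle: passing from global maximality to fibrewise maximality. The approach I would try first follows Li--Renault. The Weyl groupoid of $(A,B)$ is a twisted étale groupoid whose $C^*$-algebra is $A$, and it is topologically principal. Its unit space $\widehat B$ maps to $I$, and the fibre of $A$ over $x$ is the twisted groupoid algebra of the restriction to the preimage of $x$. Since that fibre is $M_m$, this restriction must be the pair groupoid on exactly $m$ points. This yields $\dim B_x=m$ and $B_x\cong D_m$ inside $M_m$.

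\emph{Step 3: $\widehat B$ is a covering of $I$.} The map $\widehat B\to I$ is proper and has fibres of constant cardinality $m$. Minimal projections of $B_x$ lift locally to continuous projection-valued functions in $B$, since $B$ is a continuous field with constant fibre dimension. Hence the map is a local homeomorphism, and so it is an $m$-sheeted covering of $I$.

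\emph{Step 4: trivialise and conjugate.} Consider the bundle over $I$ whose fibre at $x$ is the set of unitaries $u\in U(m)$ with $uB_xu^*=D_m$. This is a principal bundle with structure group $N_{U(m)}(D_m)=\mathbb T^m\rtimes\operatorname{Sym}(m)$; its local triviality follows from Step 3 together with continuity of the minimal projections.

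Since $I$ is contractible, the bundle is trivial. Choose a continuous section $u\in C_b(I,U(m))$; such a section exists even when $I$ is not compact, because $I$ is contractible. Then $\operatorname{Ad}u$ maps $B$ into $C_0(I,D_m)$. Since $C_0(I,D_m)$ is itself a Cartan subalgebra, maximal abelianness of $\operatorname{Ad}u(B)$ gives equality.

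Thus every Cartan subalgebra is conjugate to the standard one $C_0(I,D_m)$ by an inner automorphism implemented by a unitary in $M(A)=C_b(I,M_m)$, which proves uniqueness.
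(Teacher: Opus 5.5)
Your Steps 1, 3 and 4 are sound. The paper itself gives no proof here; the statement is quoted from \cite{LR2}. Step 2, which you rightly single out, has a genuine gap. The direct attempt is never carried out, and the Weyl groupoid fallback rests on a false inference. The restriction $\mathcal G|_F$ of the Weyl groupoid to the closed invariant set $F$ over $x$ need not be principal, because topological principality does not pass to closed invariant subsets. Indeed, $\mathcal G|_F$ can contain germs of partial homeomorphisms that fix a point of $F$ but differ at nearby points off $F$. A transitive groupoid on $F$ with isotropy $H$ and twist $\omega$ has twisted algebra $M_{|F|}\otimes\mathrm{C}^*(H,\omega)$. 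This is a full matrix algebra whenever $\omega$ is nondegenerate; for $H=(\mathbb Z/2\mathbb Z)^2$ with the Pauli cocycle it is $M_{2|F|}$, which is exactly the structure of the regular, non-maximal inclusion $\mathbb C1_2\subseteq M_2$. So ``the fibre is $M_m$'' does not give $|F|=m$. In fact your argument uses nothing specific to $C_0(I,M_m)$. Applied verbatim at $t=0$ in $J_{m,n}$, whose endpoint fibre is $M_m\otimes 1\cong M_m$, it would show that every Cartan subalgebra of $J_{m,n}$ is non-degenerate. That fails when $\gcd(m,n)>1$ (see the proof of Theorem~\ref{feituihua}). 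A minor further point: identifying $A_x$ with the \emph{reduced} twisted algebra of $\mathcal G|_F$ needs an exactness input such as amenability.

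The missing idea is to use that all nearby fibres are the same full matrix algebra. Put $U=\{x\in I:\dim B_x=m\}$.

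\emph{$U$ is open.} Elements of $B$ are continuous, so $x\mapsto\dim B_x$ is lower semicontinuous.

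\emph{$U$ is dense.} On any open subinterval, the dimension attains its maximum $k$ on an open set. There the minimal projections $q_1,\dots,q_k$ of $B_x$ can be chosen continuous, as spectral projections of a self-adjoint element of $B$ (as in Lemma~\ref{lianxu}). If $k<m$, some $q_i$ has rank at least $2$. A bump function times a continuous rank-one subprojection of $q_i$ then commutes with $B$ but does not lie in $B$, contradicting maximality.

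\emph{$U$ is closed.} The only conditional expectation of $M_m$ onto a MASA $D$ is $E_D$. So if $x_k\to x$ with $B_{x_k}=u_kD_mu_k^*$ and $u_k\to u$, continuity of $E(a)$ forces $E_x=E_{uD_mu^*}$, hence $B_x=E_x(M_m)=uD_mu^*$. This is the argument of Lemma~\ref{duandian}. That lemma takes interior maximality from the present theorem, so it cannot simply be quoted; the density step above must supply it.

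Connectedness of $I$ gives $U=I$, and your Steps 3 and 4 then finish the proof.
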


\section{Cartan subalgebras of dimension drop interval algebras} \label{youxian}

\subsection{Non-degenerate Cartan subalgebras and their conjugacy classes}
For convenience, we first introduce the one-sided dimension drop algebra; see \cite{BR}.

\begin{definition}
The one-sided dimension drop algebra for $m, n \geqslant1 $ is
\[ {J}_{m,n}=\{f\in C([0,1],M_m\otimes M_n):f(0)\in M_m\otimes 1\}.\]
\end{definition}

We call the subalgebra\[C_{m,n}=\{f\in C([0,1],D_m\otimes D_n): f(0)\in D_m\otimes 1\}\] its standard Cartan subalgebra.

\begin{lemma}[{\cite[Lemma~3.4]{BR}}]\label{danbian}
For every Cartan subalgebra $B\subseteq J_{1,n}$, there is a unitary $u\in C_b((0,1],U(n))$ such that $(\operatorname{Ad}u)(B)=C_{1,n}.$ In particular, $J_{1,n}$ has a unique Cartan subalgebra up to conjugacy by an automorphism.
\end{lemma}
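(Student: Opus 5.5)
The plan is to reduce to the homogeneous ideal over the half-open interval and apply Theorem~\ref{qici} there. Then I extend the resulting conjugation back to $J_{1,n}$, and finally recover $B$ by maximality. Write
\[
J_{1,n}=\{f\in C([0,1],M_n): f(0)\in\mathbb C1_n\}\quad\text{and}\quad I=C_0((0,1],M_n)\subseteq J_{1,n}.
\]
Then $I$ is an ideal and $J_{1,n}=I+\mathbb C1$.

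\textbf{Step 1: $B\cap I$ is a Cartan subalgebra of $I$.} Since $J_{1,n}$ is unital and $B$ is maximal abelian, $B$ contains $1$ and the centre $C([0,1])$. Put $Z_0=C_0((0,1])$; it is central and contained in $B$, and $I=Z_0J_{1,n}$.
\begin{itemize}
\item \emph{Conditional expectation.} Let $E:J_{1,n}\to B$ be the faithful conditional expectation. For $h\in Z_0$ we have $E(hf)=hE(f)$, so $E(I)=Z_0B=B\cap I$. Hence $E|_I$ is a faithful conditional expectation of $I$ onto $B\cap I$.
\item \emph{Approximate identity.} An approximate unit $h_k\in Z_0$ of $Z_0$ is an approximate identity for $I$ and lies in $B\cap I$.
\item \emph{Regularity.} If $v\in N_{J_{1,n}}(B)$ and $h\in Z_0$, then $hv\in N_I(B\cap I)$. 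Such elements generate $Z_0J_{1,n}=I$.
\item \emph{Maximality.} Let $x\in I$ commute with $B\cap I=Z_0B$. Then $hxb=hbx$ for all $h\in Z_0$ and $b\in B$, so $xb=bx$ fibrewise on $(0,1]$ and at $0$ by continuity. Hence $x\in B$, and so $x\in B\cap I$.
\end{itemize}

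\textbf{Step 2: conjugate on the ideal.} By Theorem~\ref{qici} applied to $I=C_0((0,1],M_n)$, there is a unitary $u\in C_b((0,1],U(n))$ with
\[
(\operatorname{Ad}u)(B\cap I)=C_0((0,1],D_n).
\]

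\textbf{Step 3: $\operatorname{Ad}u$ is an automorphism of $J_{1,n}$.} Take $f\in J_{1,n}$ with $f(0)=\lambda1$. Then $ufu^*$ is continuous on $(0,1]$, and
\[
\|u(t)f(t)u(t)^*-\lambda1\|=\|f(t)-\lambda1\|\to0\quad\text{as }t\to0.
\]
So $ufu^*$ extends continuously to $[0,1]$ with scalar value $\lambda1$ at $0$. The same argument applies to $u^*$, so $\operatorname{Ad}u$ is an automorphism of $J_{1,n}$, even though $u$ itself need not extend to $0$.

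\textbf{Step 4: identify the image.} The algebra $(\operatorname{Ad}u)(B)$ is abelian and contains both $1$ and $C_0((0,1],D_n)$. Hence it contains
\[
C_{1,n}=C_0((0,1],D_n)+\mathbb C1.
\]
Moreover, $C_{1,n}$ is maximal abelian in $J_{1,n}$. Indeed, anything commuting with it commutes pointwise with $D_n$ on $(0,1]$, so it is $D_n$-valued there, and it is scalar at $0$ because it lies in $J_{1,n}$. Therefore $(\operatorname{Ad}u)(B)=C_{1,n}$, and uniqueness up to conjugacy follows.

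The main obstacle is Step 1, namely checking that all four Cartan axioms pass to $B\cap I$. It is handled by the centrality of $Z_0$ inside $B$. The other point needing care is that $u$ is defined only on $(0,1]$. This is harmless precisely because the fibre at $0$ is scalar, so conjugation by $u$ cannot change the value there.
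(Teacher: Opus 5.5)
Your proof is correct. Note that the paper does not prove this lemma: it is quoted from \cite[Lemma~3.4]{BR}, so there is no in-paper argument to match, and your proposal makes the citation self-contained modulo Theorem~\ref{qici}.

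Your route has four steps:
\begin{enumerate}
\item restrict to the ideal $C_0((0,1],M_n)$;
\item transfer the four Cartan axioms using the centrality of $C_0((0,1])\subseteq B$;
\item apply Theorem~\ref{qici} on the half-open interval;
\item observe that $\operatorname{Ad}u$ extends across $0$ because the fibre there is scalar.
\end{enumerate}
This is the same mechanism the paper itself uses in the proof of Theorem~\ref{danbianweiyi}. There, a unitary $U$ defined only on $(0,\varepsilon]$ is shown to induce an automorphism by checking $\|q(t)z_i(t)^*z_j(t)q(t)^*-1_n\|\to0$. For $m=1$ that check is trivial, and it is exactly your Step~3.

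What your formulation buys is that any wild behaviour of $B(t)$ as $t\to0$ is absorbed by allowing $u$ to be merely continuous on $(0,1]$. The identification $(\operatorname{Ad}u)(B)=C_{1,n}$ then follows from maximality of $C_{1,n}$, so no endpoint analysis of $B$ is required.

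Three minor precision points:
\begin{itemize}
\item In Step~1 you only need $E(I)\subseteq\overline{Z_0B}\subseteq B\cap I$. Together with $E|_{B\cap I}=\mathrm{id}$ this suffices, so the equality $E(I)=Z_0B$ need not be claimed.
\item The commutant arguments in Steps~1 and~4 tacitly use that a maximal abelian subalgebra equals its relative commutant. This holds because the relative commutant is self-adjoint, and each self-adjoint element of it generates an abelian algebra together with $B$.
\item For the ``in particular'' clause, you should note that $C_{1,n}$ is itself Cartan. This is the standard verification invoked in Proposition~\ref{biaozhun}.
\end{itemize}
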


The following result was stated in \cite[Proposition~3.7]{BR}. We give a self-contained proof, as it will be used in the classification below.

\begin{theorem}\label{danbianweiyi}
The one-sided dimension drop algebras have a unique non-degenerate Cartan subalgebra up to conjugacy.
\end{theorem}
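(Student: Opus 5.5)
Given a non-degenerate Cartan subalgebra $B\subseteq J_{m,n}$, we want an automorphism of $J_{m,n}$ carrying $B$ to $C_{m,n}$. Here non-degenerate means that the fibre $B_0\subseteq M_m\otimes 1$ has dimension $m$. The plan is to normalise $B$ first at the endpoint $0$, then on the open part $(0,1]$ by Theorem~\ref{qici}, and finally to glue the two pieces.

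\emph{Normalising at $0$.} Since $B$ is abelian and contains an approximate identity, its fibre $B_0$ is an abelian subalgebra of $M_m\otimes 1\cong M_m$. Non-degeneracy gives $\dim B_0=m$, so $B_0$ is a masa of $M_m$. Hence there is a unitary $v\in U(m)$ with $vB_0v^*=D_m$. Conjugating by the constant unitary $v\otimes 1_n$, which is an automorphism of $J_{m,n}$, we may assume $B_0=D_m\otimes 1$.

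\emph{Local structure near $0$.} Let $e_1,\dots,e_m$ be the minimal projections of $D_m$. Since $B_0=D_m\otimes 1$, we can lift each $e_i\otimes 1$ to a projection $P_i\in B$ defined on a neighbourhood $[0,\varepsilon)$, using continuity and functional calculus in the abelian algebra $B$. Each $P_i(t)$ has rank $n$, and $\sum_i P_i=1$ near $0$. Compressing $B$ by $P_i$ on $[0,\varepsilon)$ gives a Cartan subalgebra of a corner. After trivialising $P_i$ continuously by a unitary path $w(t)$ with $w(0)=1$, this corner is isomorphic to $J_{1,n}$ restricted to $[0,\varepsilon)$. Lemma~\ref{danbian} then supplies, on $(0,\varepsilon)$, unitaries $u_i\in C_b((0,\varepsilon),U(n))$ conjugating each compressed piece to $C_{1,n}$.

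\emph{Gluing.} The direct sum $\bigoplus_i u_i$, composed with $w$, is a unitary $u$ on $(0,\varepsilon)$ that conjugates $B$ into $D_m\otimes D_n$. Away from $0$, Theorem~\ref{qici} applied to $C_0((0,1],M_{mn})$ gives a unitary $u'\in C_b((0,1],U(mn))$ with $u'B{u'}^*=C((0,1],D_{mn})$. Both $u$ and $u'$ conjugate $B$ to the diagonal on the overlap, so on $(\varepsilon/2,\varepsilon)$ the two differ by a unitary normalising the diagonal. Such a unitary is a permutation matrix times a diagonal unitary, and on a connected set the permutation is constant. We may therefore multiply $u'$ by a constant permutation and a diagonal unitary path so that it agrees with $u$ there, and splice the two to get a single unitary $U$ on $(0,1]$.

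\emph{Extending the automorphism to $0$.} The main obstacle is that $U$ is only bounded and need not extend continuously to $t=0$, so $\operatorname{Ad}U$ is not obviously an automorphism of $J_{m,n}$. The remedy is to note that $U$ is block diagonal with respect to the $P_i$ up to $w$, with blocks $u_i\in U(n)$. Conjugation by $1_m\otimes(\text{anything})$ preserves $M_m\otimes 1$, and more generally conjugation by a block unitary $\sum_i e_i\otimes u_i(t)$ maps $J_{m,n}$ into itself near $0$. The discontinuous part acts only on the $M_n$ tensor factor in each block, and at $t=0$ the algebra only sees $e_i\otimes 1$. One must check carefully that elements with $f(0)\in M_m\otimes 1$ stay continuous. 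This works because the off-diagonal blocks $e_{ij}\otimes 1$ get conjugated to $e_{ij}\otimes u_i u_j^*$, which may fail to converge. To handle this I would use the standard trick from the proof of Lemma~\ref{danbian}: before applying it, choose the $u_i$ all equal, which is possible since the compressed pieces are identified via the partial isometries $e_{ij}\otimes 1$ lifted to normalisers in $B$'s ambient algebra. Then $U=1_m\otimes u_1$ up to the continuous factor $w$, and $\operatorname{Ad}U$ is an automorphism of $J_{m,n}$ mapping $B$ onto $C_{m,n}$.
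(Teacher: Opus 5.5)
Your outline follows the paper's route: normalise $B(0)$ to $D_m\otimes1_n$, lift the $e_{ii}\otimes1_n$ to projections $P_i\in B$ near $0$, apply Lemma~\ref{danbian} to one corner, transport the result to the other corners by normalisers lifting $e_{i1}\otimes1_n$, and glue with Theorem~\ref{qici} away from $0$. Your splice on an overlap is an acceptable variant of the paper's $[\delta,1]$ argument. The gap is in the decisive last step, in two places.

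\emph{First, the lifted normalisers are asserted but not constructed.} Regularity gives $P_iAP_1=\overline{\operatorname{span}}\,P_iN_A(B)P_1$, and the fibre at $0$ of this corner is $\mathbb C(e_{i1}\otimes1_n)$. So some normaliser $x_i\in P_iN_A(B)P_1$ has $x_i(0)=e_{i1}\otimes1_n$. Its polar part $n_i=x_i(x_i^*x_i)^{-1/2}$ is then a normalising partial isometry with $n_i^*n_i=P_1$ and $n_in_i^*=P_i$.

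\emph{Second, these normalisers are not $e_{i1}\otimes1_n$.} After an arbitrary trivialisation $w$ of the $P_i$, they have the form $e_{i1}\otimes z_i(t)$ with $z_i$ continuous and $z_i(0)=1_n$, but in general $z_i\neq1_n$. So the compressed pieces are $z_iB_1z_i^*$, not one common algebra $B_1$. Consequently the choice $u_i=u_1$ does not conjugate them to the diagonal, and $U$ is \emph{not} $1_m\otimes u_1$ up to $w$. Your intermediate claim that any block unitary $\sum_ie_{ii}\otimes u_i(t)$ preserves $J_{m,n}$ near $0$ is also false, for exactly the reason you give right after it.

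The repair is the heart of the paper's proof. Take $u_i=u_1z_i^*$, that is, $U=\sum_ie_{ii}\otimes u_1z_i^*$ composed with $w$. Then $u_iu_j^*=u_1z_i^*z_ju_1^*$ does converge to $1_n$, because $\|u_1z_i^*z_ju_1^*-1_n\|=\|z_i^*z_j-1_n\|\to0$, even though $u_1$ itself has no limit at $0$.

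Equivalently, build the trivialisation from the normalisers. Set $n_1=P_1$, let $v_1$ be the polar part of $(e_{11}\otimes1_n)P_1$, and put $w=\sum_i(e_{i1}\otimes1_n)v_1n_i^*$. Then $w$ is unitary, $w(0)=1$ and $wn_iw^*=e_{i1}\otimes1_n$. Hence $wBw^*=\{\sum_ie_{ii}\otimes b_i:b_i\in B_1\}$ near $0$, and your formula $U=(1_m\otimes u_1)w$ becomes correct. With either fix, the rest of your argument goes through.
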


\begin{proof}
For convenience, we write $A=J_{m,n}$, $C=C_{m,n}$. Let $B\subseteq J_{m,n}$ be a non-degenerate Cartan subalgebra. After conjugating by a constant unitary, we may assume that $B(0)=D_m\otimes 1_n$. Put $p_i=e_{ii}\otimes 1_n$. By semiprojectivity of $\mathbb C^m$, after restricting to a sufficiently small interval $[0,\varepsilon]$, there are orthogonal projections $f_1,\ldots,f_m\in B_{[0,\varepsilon]}$ such that
\[f_i(0)=p_i,\qquad \sum_{i=1}^m f_i=1,\qquad \|f_i-p_i\|<1.\]
Then the polar decomposition of $p_if_i(t)$ provides us with partial isometries $v_i(t)$ such that
\[v_i(t)^*v_i(t)=f_i(t),\qquad v_i(t)v_i(t)^*=p_i,\qquad v_i(0)=p_i.\]
Let $v=v_1+\cdots+v_m$. Then $v$ is a unitary and $v f_i v^*=p_i$, $i=1,\ldots,m$. Since $v(0)=1$, we may extend $v$ to a unitary $\widetilde v\in A$ which agrees with $v$ on $[0,\varepsilon]$ and is equal to $1$ away from $0$. After conjugating $B$ by $\widetilde v$, we may therefore assume that $p_i\in B_{[0,\varepsilon]}$.

Put $s_i=e_{i1}\otimes 1_n$. Regularity gives $p_iA_{[0,\varepsilon]}p_1
=\overline{\operatorname{span}}\bigl(p_iN_{A_{[0,\varepsilon]}}(B_{[0,\varepsilon]})p_1\bigr).$ Since $p_iA_0p_1=\mathbb C s_i$, regularity allows us to choose
$x_i\in p_iN_{A_{[0,\varepsilon]}}(B_{[0,\varepsilon]})p_1$, $x_i(0)=s_i$. Since $x_i^*x_i\in p_1B_{[0,\varepsilon]}p_1$ and $(x_i^*x_i)(0)=p_1$, after shrinking $\varepsilon$ if necessary, $x_i^*x_i$ is invertible in the corner
$p_1A_{[0,\varepsilon]}p_1$. Hence $n_i=x_i(x_i^*x_i)^{-1/2}$ is a normalising partial isometry satisfying
\[n_i^*n_i=p_1,\qquad n_in_i^*=p_i,\qquad n_i(0)=s_i.\]
Consequently, there is a continuous map $z_i\colon[0,\varepsilon]\to U(n)$ such that
\[
n_i(t)=e_{i1}\otimes z_i(t),
\qquad z_i(0)=1_n.
\]
Since $p_1B_{[0,\varepsilon]}p_1$ is a Cartan subalgebra of $p_1A_{[0,\varepsilon]}p_1\cong J_{1,n}$, Lemma~\ref{danbian} provides $q\in C_b((0,\varepsilon],U(n))$ such that, with $r_1(t)=e_{11}\otimes q(t)$, $r_1\bigl(p_1B_{[0,\varepsilon]}p_1\bigr)r_1^* =p_1C_{[0,\varepsilon]}p_1$. For $i=1,\ldots,m$, set
\[
U_i(t)=s_i r_1(t)n_i(t)^*,
\qquad
U(t)=\sum_{i=1}^m U_i(t).
\]
For each $t\in(0,\varepsilon]$, we have
$U_i(t)^*U_i(t)=U_i(t)U_i(t)^*=p_i$.
The $p_i$ are mutually orthogonal and sum to $1$; hence $U(t)=\sum_iU_i(t)$ is unitary. Moreover, $U(t)=\sum_{i=1}^m e_{ii}\otimes q(t)z_i(t)^*$. Thus, for $a=(a_{ij})\in M_m$,
\[
U(t)(a\otimes1_n)U(t)^*
=\sum_{i,j=1}^m a_{ij}e_{ij}\otimes
q(t)z_i(t)^*z_j(t)q(t)^*.
\]
As $z_i(t)\to1_n$, we have
\[
\|q(t)z_i(t)^*z_j(t)q(t)^*-1_n\|
=\|z_i(t)^*z_j(t)-1_n\|\longrightarrow0.
\]
The analogous limit for conjugation by $U(t)^*$ shows that $\operatorname{Ad}U$ extends to an automorphism of $A_{[0,\varepsilon]}$ fixing the endpoint quotient. The normalising property of $n_i$ gives
\[
n_i^*(p_iB_{[0,\varepsilon]}p_i)n_i
=p_1B_{[0,\varepsilon]}p_1.
\]
Lemma~\ref{danbian} and the definition of $U_i$ therefore imply
\[
(\operatorname{Ad}U)(p_iB_{[0,\varepsilon]}p_i)
=p_iC_{[0,\varepsilon]}p_i.
\]
As the $p_i$ belong to $B_{[0,\varepsilon]}$ and sum to $1$, we obtain
$(\operatorname{Ad}U)(B_{[0,\varepsilon]}) =C_{[0,\varepsilon]}$. Finally, using the path connectedness of $U(mn)$, extend $\operatorname{Ad}U$ to an automorphism of $A$ which is the identity away from $0$. After conjugating $B$ by this automorphism, we may assume that $B_{[0,\varepsilon]}=C_{[0,\varepsilon]}$.

Choose $0<\delta<\varepsilon$. By \cref{qici}, there exists $w\in C([\delta,1],U(mn))$ such that $wB_{[\delta,1]}w^*=C_{[\delta,1]}$. Since $B_{[\delta,\varepsilon]}=C_{[\delta,\varepsilon]}$, we have $w(t)\in N_{U(mn)}(D_m\otimes D_n)$, $t\in[\delta,\varepsilon]$. Define
\[
y(t)=
\begin{cases}
w(t)^*,& t\in[\delta,\varepsilon],\\
w(\varepsilon)^*,& t\in[\varepsilon,1],
\end{cases}
\qquad
w'=yw.
\]
Then $y$ normalises $C_{[\delta,1]}$, so
\[
w'B_{[\delta,1]}(w')^*=C_{[\delta,1]},
\qquad
w'|_{[\delta,\varepsilon]}=1.
\]
Extending $w'$ by $1$ on $[0,\delta]$ gives a unitary $\widetilde w\in A$ satisfying $\widetilde wB\widetilde w^*=C$. Composing this inner automorphism with the preceding conjugacies completes the proof.
\end{proof}

\begin{proposition}\label{biaozhun}
The algebra
\[
B_{\mathrm{st}} =M_m(\tilde{\mathbb{I}}_p)\cap C([0,1],D_p\otimes D_m)
\]
is a non-degenerate Cartan subalgebra of $M_m(\tilde{\mathbb{I}}_p)$.
\end{proposition}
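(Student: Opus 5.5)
We verify the four Cartan conditions directly, working fibrewise and using that $D_p\otimes D_m$ is a Cartan subalgebra (indeed a masa) of $M_p\otimes M_m$ at every interior point. Write $A=M_m(\tilde{\mathbb I}_p)$ and $B=B_{\mathrm{st}}$. First observe that $B=\{f\in C([0,1],D_p\otimes D_m): f(0),f(1)\in 1_p\otimes D_m\}$, since $(D_p\otimes D_m)\cap(1_p\otimes M_m)=1_p\otimes D_m$. Hence $B_0=B_1=1_p\otimes D_m$ has dimension $m$, which gives non-degeneracy once $B$ is known to be Cartan. The algebra $B$ is clearly abelian and unital, containing $1\in A$, so it contains an approximate identity.

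\emph{Conditional expectation.} Let $E_0\colon M_p\otimes M_m\to D_p\otimes D_m$ be the diagonal compression, and define $E(f)(t)=E_0(f(t))$. At the endpoints, $E_0(1_p\otimes a)=1_p\otimes\operatorname{diag}(a)\in 1_p\otimes D_m$, so $E$ maps $A$ into $B$. Complete positivity, the bimodule property and $E|_B=\mathrm{id}$ hold pointwise. Faithfulness also follows pointwise: if $E(f^*f)=0$, then $E_0(f(t)^*f(t))=0$ for all $t$, so $f=0$.

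\emph{Maximal abelian.} Let $g\in A$ commute with $B$. At each interior $t$, the evaluations $\{f(t):f\in B\}$ exhaust $D_p\otimes D_m$, because we may take $f(t)=d\cdot\varphi(t)$ with $\varphi\in C_0(0,1)$ scalar and $\varphi(t)=1$. Hence $g(t)$ commutes with $D_p\otimes D_m$, so $g(t)\in D_p\otimes D_m$. By continuity this also holds at the endpoints, and combined with $g(0),g(1)\in 1_p\otimes M_m$ we get $g\in B$.

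\emph{Regularity.} This is the main step. I would exhibit explicit normalizers.
\begin{itemize}
\item[(i)] For matrix units $e_{ij}\otimes e_{kl}$ and $\varphi\in C_0(0,1)$, the element $\varphi\,(e_{ij}\otimes e_{kl})$ lies in $A$, since it vanishes at the endpoints. It normalizes $B$, because a matrix unit $x$ satisfies $x^*dx\in D_p\otimes D_m$ for diagonal $d$, and the endpoint condition is vacuous.
\item[(ii)] The constant functions $1_p\otimes e_{kl}$ lie in $A$ and normalize $B$. Indeed, $(1_p\otimes e_{kl})^*f(1_p\otimes e_{kl})$ is diagonal at every $t$ and lies in $1_p\otimes D_m$ at the endpoints.
\end{itemize}
The elements in (i) generate $C_0((0,1),M_p\otimes M_m)$, which is the ideal of functions vanishing at both endpoints. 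The elements in (ii) together with $1$ surject onto the endpoint quotient $(1_p\otimes M_m)\oplus(1_p\otimes M_m)$. This surjection requires independent values at $0$ and $1$, so I would also use $\psi\cdot(1_p\otimes e_{kl})$ with $\psi\in C[0,1]$ real. Such an element is still a normalizer, since $\psi$ is central and real-valued, and this realises arbitrary endpoint pairs. Therefore the $C^*$-algebra generated by the normalizers contains the ideal and maps onto the quotient, so it equals $A$. Some care is needed only in this last ideal-plus-quotient argument; everything else is pointwise.
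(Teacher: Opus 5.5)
Your proof is correct. It matches the paper's argument for the expectation, maximal abelianness and non-degeneracy, but it proves regularity by a different route.

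\textbf{Where you agree with the paper.} The expectation, maximal abelianness and non-degeneracy are handled fibrewise in both: diagonal compression gives the expectation, and fibres are MASAs. You add one detail the paper leaves implicit: evaluations of $B_{\mathrm{st}}$ at an interior point exhaust $D_p\otimes D_m$, via elements $d\,\varphi$ with $\varphi\in C_0(0,1)$. This is what makes the fibrewise-MASA step airtight.

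\textbf{Where you differ: regularity.}
\begin{itemize}
\item The paper argues locally. Over an interior subinterval it identifies the inclusion with $C(\overline V,D_p\otimes D_m)\subseteq C(\overline V,M_p\otimes M_m)$. Near each endpoint, after interchanging tensor factors, it identifies it with the standard one-sided pair $C_{m,p}\subseteq J_{m,p}$. It then asserts that regularity may be checked locally.
\item You instead exhibit global normalizers: $\varphi\,(e_{ij}\otimes e_{kl})$ with $\varphi\in C_0(0,1)$, and $\psi\,(1_p\otimes e_{kl})$ with $\psi\in C([0,1])$. You finish with an ideal-plus-quotient argument. The generated $\mathrm{C}^*$-algebra contains $C_0((0,1),M_p\otimes M_m)$ and maps onto $(1_p\otimes M_m)\oplus(1_p\otimes M_m)$, so it is all of $M_m(\tilde{\mathbb I}_p)$.
\end{itemize}

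\textbf{What each route buys.} Your argument is self-contained. It avoids two points the paper leaves unproved. One is the localization principle for regularity, which needs a cutoff argument with central functions to justify. The other is the regularity of $C_{m,p}\subseteq J_{m,p}$, which the paper imports. The paper's route is shorter. It also emphasizes that near an endpoint the inclusion is exactly the one-sided standard pair, the local model the paper reuses when classifying non-degenerate Cartan subalgebras and when treating degeneracy.
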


\begin{proof}
The canonical diagonal expectation restricts to a faithful conditional expectation $E\colon M_m(\tilde{\mathbb{I}}_p)\longrightarrow B_{\mathrm{st}}$. For every $t\in[0,1]$, the fibre $B_{\mathrm{st}}(t)$ is a MASA of $M_m(\tilde{\mathbb{I}}_p)(t)$. Hence $B_{\mathrm{st}}$ is maximal abelian in $M_m(\tilde{\mathbb{I}}_p)$.
	
Regularity may be checked locally. Over an interior interval, the inclusion is isomorphic to
\[
C(\overline V,D_p\otimes D_m) \subseteq C(\overline V,M_p\otimes M_m),
\]
which is regular. Near either endpoint, after interchanging the tensor factors, it is isomorphic to the standard Cartan inclusion $C_{m,p}\subseteq J_{m,p}$, which is also regular. Thus $B_{\mathrm{st}}$ is Cartan. Finally,
\[
B_{\mathrm{st}}(0)=B_{\mathrm{st}}(1) =1_p\otimes D_m,
\]
so it is non-degenerate.
\end{proof}

\begin{definition}\label{niuzhuan}
Let $\sigma\in\operatorname{Sym}(\underline{p}\times \underline{m})$, viewed as a permutation unitary in $M_p\otimes M_m$.  Choose a continuous unitary path
\[
u^\sigma\in C([1/3,2/3],M_p\otimes M_m),
\qquad u^\sigma_{1/3}=\sigma,
\quad u^\sigma_{2/3}=\mathbf{1}.
\]
Extend it by $u_t^\sigma=\sigma$ on $[0,1/3]$ and $u_t^\sigma=1$ on $[2/3,1]$, and define
\begin{equation*}
 B_\sigma
 =u^\sigma C([0,1],D_p\otimes D_m)(u^\sigma)^*\cap M_m(\tilde{\mathbb{I}}_p).
\end{equation*}
This is a non-degenerate Cartan subalgebra, called a twisted standard
Cartan subalgebra.
\end{definition}

It is straightforward to verify that, up to conjugacy in $M_m(\tilde{\mathbb{I}}_p)$, the resulting Cartan subalgebra is independent of
both the choice of the interpolating unitary and the choice of the
subdivision points. Conversely, it turns out that up to conjugacy, every non-degenerate Cartan subalgebra of $M_m(\tilde{\mathbb{I}}_p)$ is of this form.

\begin{theorem}\label{biaozhunxing}
Every non-degenerate Cartan subalgebra of $M_m(\tilde{\mathbb{I}}_p)$ is conjugate to $B_\sigma$ for some $\sigma\in\operatorname{Sym}(\underline{p}\times \underline{m})$.
\end{theorem}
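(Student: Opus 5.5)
The plan is to normalise a non-degenerate Cartan subalgebra $B\subseteq A=M_m(\tilde{\mathbb I}_p)$ separately near each endpoint, using \cref{danbianweiyi}, and then to compare the two normal forms across the interior using \cref{qici}. The only data that cannot be removed is a permutation, and this permutation is exactly the twist $\sigma$ in \cref{niuzhuan}.

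\textbf{Step 1: the endpoints.} Restricted to $[0,1/3]$, the algebra $A_{[0,1/3]}$ is, after interchanging tensor factors, the one-sided algebra $J_{m,p}$, and $B_{[0,1/3]}$ is a non-degenerate Cartan subalgebra of it. Restriction to a closed subinterval preserves the Cartan properties, since maximality, regularity and the expectation are all fibrewise or local. By \cref{danbianweiyi}, there is an automorphism of $A_{[0,1/3]}$ carrying $B_{[0,1/3]}$ to the standard $C_{m,p}$.

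The proof of \cref{danbianweiyi} shows more: this automorphism can be taken to be implemented by unitaries that are the identity near $1/3$. Granting this, it extends by the identity to an automorphism of $A$. The same construction applies on $[2/3,1]$. After these conjugations we may assume
\[
B_{[0,1/3]}=(B_{\mathrm{st}})_{[0,1/3]},\qquad B_{[2/3,1]}=(B_{\mathrm{st}})_{[2/3,1]}.
\]
This step is the main obstacle. The automorphism produced by \cref{danbianweiyi} is pieced together from a unitary over a small interval $[0,\varepsilon]$ and a unitary $w$ from \cref{qici}. Its behaviour near $1/3$ must therefore be re-checked. The fix I would try first is to apply \cref{danbianweiyi} on $[0,1/2]$ instead of $[0,1/3]$. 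On $[1/3,1/2]$ the resulting automorphism is then implemented by a continuous unitary path, which can be cut off there using path-connectedness of $U(pm)$. Cutting it off destroys the normal form only on $[1/3,1/2]$, which is part of the interior and is handled in Step 2.

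\textbf{Step 2: the interior.} On $[1/3,2/3]$ the algebra is $C([1/3,2/3],M_{pm})$. By \cref{qici}, there is a continuous unitary $w$ with $wB_{[1/3,2/3]}w^*=C([1/3,2/3],D_p\otimes D_m)$. At $t=1/3$ and $t=2/3$, the fibre of $B$ is already $D_p\otimes D_m$, so $w(1/3)$ and $w(2/3)$ lie in the normaliser of $D_{pm}$ in $U(pm)$. Hence $w(1/3)=d_0\tau_0$ and $w(2/3)=d_1\tau_1$, with $d_0,d_1$ diagonal unitaries and $\tau_0,\tau_1$ permutation matrices.

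\textbf{Step 3: normalising $w$.} Multiply $w$ on the left by a path of diagonal unitaries joining $d_0^*$ to $d_1^*$; this is possible because the diagonal unitary group is connected. The new $w$ still conjugates $B$ onto the diagonal on the interior, and now $w(1/3)=\tau_0$ and $w(2/3)=\tau_1$. Multiplying further on the left by the constant permutation $\tau_1^*$, which normalises the diagonal, gives $w(2/3)=1$ and $w(1/3)=\sigma^{-1}$ for some $\sigma\in\operatorname{Sym}(\underline p\times\underline m)$.

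\textbf{Step 4: comparison with $B_\sigma$.} On the interior, $B_{[1/3,2/3]}=w^*C\,w$. Compare this with $u^\sigma C(u^\sigma)^*$, where $u^\sigma=w^*$ is an admissible interpolating path from $\sigma$ to $1$. On $[0,1/3]$ we have $\sigma(D_p\otimes D_m)\sigma^*=D_p\otimes D_m$, and on $[2/3,1]$ the unitary is $1$. Therefore $B$ coincides with $B_\sigma$ for this choice of interpolating path. By the independence remark following \cref{niuzhuan}, $B$ is conjugate to $B_\sigma$.
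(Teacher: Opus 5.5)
Your proposal is correct and takes essentially the same approach as the paper. You normalise near both endpoints via Theorem~\ref{danbianweiyi}, apply Theorem~\ref{qici} on $[1/3,2/3]$, and reduce the monomial endpoint values of $w$ to a permutation and the identity. Your cut-off argument in Step~1, which uses that the automorphism from Theorem~\ref{danbianweiyi} is implemented by a continuous unitary path away from $0$, supplies the detail behind the paper's brief remark that the endpoint automorphisms can be made the identity near the opposite endpoint.
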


\begin{proof}
Let $B\subseteq M_m(\tilde{\mathbb{I}}_p)$ be non-degenerate. The restrictions of $M_m(\tilde{\mathbb{I}}_p)$ to small neighborhoods of the endpoints are one-sided dimension drop algebras $J_{m,p}$. By \cref{danbianweiyi}, endpoint automorphisms carry the two restrictions of $B$ to the standard Cartan subalgebras. Extend these automorphisms toward the interior, making each one the identity near the opposite endpoint.  After conjugating, we may suppose that $B$ agrees with $B_{\mathrm{st}}$ on $[0,1/3]\cup[2/3,1]$.
	
On $[1/3,2/3]$, \cref{qici} gives a unitary path $w$ such that
\[ wB|_{[1/3,2/3]}w^*=C([1/3,2/3],D_p\otimes D_m). \]
At each endpoint, $w$ is a monomial unitary. Multiplication by a continuous monomial path does not change the diagonal algebra.  We may therefore arrange that $w_{2/3}=1$ and $w_{1/3}=\sigma$ is a permutation unitary. The resulting Cartan algebra is $B_{\sigma^{-1}}$.
\end{proof}

\begin{lemma}\label{zhongxin}
Let $\varphi$ be an orientation-preserving homeomorphism of $[0,1]$. For every $\sigma$, there is
$\alpha\in\operatorname{Aut}(B_\sigma\subseteq M_m(\tilde{\mathbb{I}}_p))$ whose restriction to the centre is $\varphi^*$.
\end{lemma}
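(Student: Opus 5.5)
The natural candidate is the reparametrization automorphism. Define $\alpha(f)=f\circ\varphi^{-1}$. Since $\varphi$ fixes $0$ and $1$, this preserves the endpoint conditions, so it is an automorphism of $M_m(\tilde{\mathbb{I}}_p)$. Its restriction to the centre $C([0,1])$ is composition with $\varphi^{-1}$, which I take to be what $\varphi^*$ denotes; if the opposite convention is intended, replace $\varphi$ by $\varphi^{-1}$ throughout.

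The issue is that $\alpha$ does not preserve $B_\sigma$. It carries $B_\sigma$ to
\[
(u^\sigma\circ\varphi^{-1})\,C([0,1],D_p\otimes D_m)\,(u^\sigma\circ\varphi^{-1})^*\cap M_m(\tilde{\mathbb{I}}_p).
\]
This is a twisted standard Cartan subalgebra built from the path $u'=u^\sigma\circ\varphi^{-1}$. The path $u'$ equals $\sigma$ on $[0,\varphi(1/3)]$, equals $1$ on $[\varphi(2/3),1]$, and interpolates in between. So the subdivision points have moved, and I must correct $\alpha$ by an inner automorphism.

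To build the correction, I would pick a continuous unitary path $w\in C([0,1],U(pm))$ with $w\,u'=u^\sigma$ modulo the diagonal. The concrete choice is $w_t=u^\sigma_t(u'_t)^*$. Then
\[
w\,u'\,C([0,1],D_p\otimes D_m)\,u'^*w^*=u^\sigma C([0,1],D_p\otimes D_m)(u^\sigma)^*.
\]
Near $0$ both paths equal $\sigma$, so $w=1$ on $[0,\min(1/3,\varphi(1/3))]$. Near $1$ both paths equal $1$, so $w=1$ on $[\max(2/3,\varphi(2/3)),1]$.

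Because $w$ equals $1$ near both endpoints, $w$ is a unitary in $M_m(\tilde{\mathbb{I}}_p)$. Conjugation by $w$ therefore preserves the algebra and its intersection with the endpoint conditions. It follows that $\operatorname{Ad}w\circ\alpha$ maps $B_\sigma$ onto $B_\sigma$. It is also the identity on the centre's inner part, so its restriction to $C([0,1])$ is still $\varphi^*$.

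The only point needing care is that $w$ is genuinely continuous and equals $1$ in neighborhoods of $0$ and $1$. This is immediate from the constancy of $u^\sigma$ and $u'$ near the endpoints, because $\varphi$ is an orientation-preserving homeomorphism. I expect no real obstacle here. The statement amounts to the remark that the subdivision points in Definition~\ref{niuzhuan} are immaterial, made explicit and compatible with a prescribed action on the centre.
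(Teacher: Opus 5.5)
Your proof is correct and is essentially the paper's argument: the paper sets $v_t=u_{\varphi(t)}$ and takes $\alpha=\operatorname{Ad}(uv^*)\circ\varphi^*$ with $\varphi^*f=f\circ\varphi$, which is your $\operatorname{Ad}w\circ\alpha$ with $\varphi$ and $\varphi^{-1}$ interchanged. You only need $w_0=w_1=1$, and this follows directly from $\varphi$ fixing the endpoints, so the constancy of $u^\sigma$ near the endpoints is not needed.
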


\begin{proof}
With $u=u^\sigma$, put $v_t=u_{\varphi(t)}$. Since $(uv^*)_0=(uv^*)_1=1$, the formula
$\alpha=\operatorname{Ad}(uv^*)\circ\varphi^*$ defines the required automorphism.
\end{proof}

In studying conjugacy of the Cartan subalgebras $B_\sigma$, it is useful to distinguish automorphisms according to their action on the base interval. Note that the centre of $M_m(\tilde{\mathbb{I}}_p)$ is isomorphic to $C([0,1])$. Hence every automorphism $\alpha$ of $M_m(\tilde{\mathbb{I}}_p)$ induces a homeomorphism of $[0,1]$. We call $\alpha$ orientation preserving if the induced homeomorphism is orientation preserving. Let $\Phi(t)=1-t$.  Precomposition by $\Phi$ is an automorphism of $M_m(\tilde{\mathbb I}_p)$. Reversing an implementing path shows that $\Phi^*(B_\sigma)$ is conjugate to $B_{\sigma^{-1}}$. For convenience, let $K_{p,m}=\operatorname{Sym}(p)\times\operatorname{Sym}(m)$, acting coordinatewise on $\underline p\times\underline m$ by $(a,b)(i,j)=(a(i),b(j))$.

\begin{theorem} \label{fenlei}
Let $\sigma,\pi\in\operatorname{Sym}(\underline{p}\times \underline{m})$.  Then $B_\sigma$ and $B_\pi$ are conjugate in $M_m(\tilde{\mathbb{I}}_p)$ if and only if
\begin{equation*}
\sigma\in K_{p,m}\pi K_{p,m}
\quad\text{or}\quad
\sigma\in K_{p,m}\pi^{-1}K_{p,m}.
\end{equation*}
\end{theorem}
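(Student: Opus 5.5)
For sufficiency I will construct the conjugating automorphisms explicitly. For the inversion case, precomposition with $\Phi$ carries $B_\sigma$ to a conjugate of $B_{\sigma^{-1}}$, as noted before the statement, so it suffices to treat $\sigma=k\pi k'$ with $k=(a,b)$, $k'=(a',b')\in K_{p,m}$. Writing $k=a\otimes b$ as a permutation unitary, I choose continuous unitary paths $x,y\colon[0,1]\to M_p\otimes M_m$ with the following properties: $x_t=k$ on $[0,1/3]$, $x_t=1_p\otimes b$ near $t=0$ is not required, but $x_0\in 1_p\otimes M_m$ after adjustment. More precisely, on $[0,1/6]$ I connect $1_p\otimes b$ to $a\otimes b$ through $U(p)\otimes b$ by a path $c_t\otimes b$ with $c_0=1$, and similarly near $t=1$ for $k'^{-1}$. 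A path $w$ that is monomial on $[0,1/3]\cup[2/3,1]$, equals $1_p\otimes(\text{permutation})$ at the endpoints, and equals $k$ on $[1/6,1/3]$ and $k'^{-1}$ on $[2/3,5/6]$ lies in $M_m(\tilde{\mathbb I}_p)$. The key point is that $c_t\otimes b$ need not be monomial, but conjugation by $c_t\otimes 1$ fixes $1_p\otimes D_m$, which is all of $B_\sigma$'s fibre near $0$. I will instead modify the defining path $u^\sigma$ and use independence of the choice of interpolating path and subdivision points. Then $\operatorname{Ad}w(B_\pi)$ is again of the twisted form, with permutation $k\pi k'$ read off at $1/3$ and $2/3$. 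This construction is the routine direction, and I only need to check that the fibres at the endpoints stay equal to $1_p\otimes D_m$.

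For necessity I will extract an invariant. Let $\alpha$ be an automorphism with $\alpha(B_\pi)=B_\sigma$. Composing with $\Phi^*$ if necessary, I may assume $\alpha$ is orientation preserving. By \cref{zhongxin}, I may further assume that $\alpha$ induces the identity on the centre, since composing with the automorphism of $B_\sigma$ provided there does not change the target. Then $\alpha$ is $C([0,1])$-linear, so it acts fibrewise by automorphisms $\alpha_t$ of $M_{pm}$. These are $\operatorname{Ad}U_t$ for a unitary $U_t$, unique up to a scalar.

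The main obstacle is arranging that $U$ can be chosen as a continuous path. Locally this follows from the triviality of the projective unitary bundle over a contractible base, so $U\in C([0,1],U(pm))$ with $U_t B_\pi(t)U_t^*=B_\sigma(t)$ and $U_0,U_1\in N(1_p\otimes M_m)$ at the endpoints. Since $U_0$ normalizes $1_p\otimes M_m$, it has the form $c\otimes d$, and it maps $1_p\otimes D_m$ to itself, so $d$ is monomial. On $(0,1)$, I will rewrite the conjugation in terms of the trivialized diagonals: $V_t=(u^\sigma_t)^*U_tu^\pi_t$ normalizes $D_p\otimes D_m$ for every interior $t$. Hence $V_t$ is monomial, and by continuity its permutation pattern $\rho$ is constant on $(0,1)$.

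Near $1$, $V_t=U_t$ is close to $U_1=c_1\otimes d_1$. Because $V_t$ is monomial and tends to $c_1\otimes d_1$, the limit $c_1\otimes d_1$ is itself monomial. A tensor product is monomial only if each factor is, so $\rho=(a_1,b_1)\in K_{p,m}$. Near $0$, $V_t=\sigma^{-1}U_t\pi$ tends to $\sigma^{-1}(c_0\otimes d_0)\pi$, and the same argument gives $\sigma^{-1}k_0\pi=\rho$ with $k_0\in K_{p,m}$. Therefore $\sigma=k_0\pi\rho^{-1}\in K_{p,m}\pi K_{p,m}$. In the orientation-reversing case the same argument yields the coset of $\pi^{-1}$.
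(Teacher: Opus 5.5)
Your necessity argument follows the paper's outline: reduce to an orientation-preserving automorphism that is trivial on the centre, observe that the fibrewise implementing unitaries are monomial with locally constant permutation pattern, show that the endpoint patterns lie in $K_{p,m}$, and compare them. However, the step you single out as the main obstacle rests on a false claim. Triviality of the projective bundle gives a continuous implementing path only over $(0,1)$, where the fibres are $M_{pm}$, and this path need not extend to $[0,1]$. For example, take $p\geq2$ and $U_t=e^{i\theta(t)e_{11}}\otimes1_m$, with $\theta(t)=1/t$ near $0$ and $\theta=0$ on $[1/3,1]$. Since $U_t$ commutes with $1_p\otimes M_m$, we have $\|U_tf(t)U_t^*-f(0)\|=\|f(t)-f(0)\|\to0$. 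So $\operatorname{Ad}U$ is a $C([0,1])$-linear automorphism of $M_m(\tilde{\mathbb I}_p)$ fixing every $B_\pi$. Yet $\alpha_t(e_{12}\otimes e_{11})=e^{i/t}\,e_{12}\otimes e_{11}$ has no limit, so no implementing path is continuous at $0$. This is why Lemma~\ref{danbian} and Theorem~\ref{qici} only give unitaries in $C_b$. Hence $U_0$, $U_1$ and the limits you take need not exist. The repair is short. On $(0,1/3]$ write $U_t=P_{\rho_0}\operatorname{diag}(\lambda(t))$ with constant $\rho_0$, and use compactness of the torus to pick $t_k\to0$ with $\lambda(t_k)\to\lambda'$. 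Then $U'=P_{\rho_0}\operatorname{diag}(\lambda')$ implements $\alpha_0$ on $1_p\otimes M_m$, because $\alpha(1_p\otimes x)$ is continuous at $0$. So $U'$ normalizes $1_p\otimes M_m$, hence $U'=c\otimes d$, and your tensor-monomial argument gives $\rho_0\in K_{p,m}$; the same works at $1$. The paper avoids endpoint unitaries entirely by using the norm convergence $\alpha_t(1_p\otimes e_{jk})\to\alpha_0(1_p\otimes e_{jk})$.

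Your sufficiency argument breaks at the ``adjustment''. A path that is monomial on $[0,1/3]$, equals $1_p\otimes b$ at $0$, and equals $a\otimes b$ on $[1/6,1/3]$ cannot exist when $a\neq1$, since a continuous monomial path has constant permutation pattern. Also, $\operatorname{Ad}(c_t\otimes1)$ does not preserve the fibres of $B_\pi$ near $0$: these are $D_p\otimes D_m$ for $t>0$, and only the fibre at $t=0$ is $1_p\otimes D_m$. More fundamentally, if $w$ is a unitary of $M_m(\tilde{\mathbb I}_p)$ with $wB_\pi w^*=B_\sigma$, then continuity of $w$ at the endpoints forces its endpoint patterns into $1\times\operatorname{Sym}(m)$. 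So conjugation by unitaries of the algebra can never produce the $\operatorname{Sym}(p)$-components of $K_{p,m}$. For instance, in $\tilde{\mathbb I}_2$ the subalgebras $B_e$ and $B_{(12)}$ are conjugate, but not by a unitary of the algebra. The independence remark after Definition~\ref{niuzhuan} does not rescue this, since your path is not constant near the endpoints. Your first choice was the right one: take $w=k$ on $[0,1/3]$ and $w=k'^{-1}$ on $[2/3,1]$. The unitary $w$ need not lie in the algebra. $\operatorname{Ad}w$ still preserves $M_m(\tilde{\mathbb I}_p)$ because $k(1_p\otimes x)k^*=1_p\otimes bxb^*$, and similarly for $k'$. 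Then $wu^\pi k'$ is an implementing path for $B_{k\pi k'}$, which is exactly the paper's construction.
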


\begin{proof}
First consider the case in which the automorphism is orientation preserving. Suppose that $\rho_1,\rho_2\in K_{p,m}$. Choose a continuous unitary~$v\in C([0,1],M_p\otimes M_m)$~such that $v_t=\rho_1$ on $[0,1/3]$ and $v_t=\rho_2$ on $[2/3,1]$. Since $\rho_1$ and $\rho_2$ normalize the full endpoint algebra $1_p\otimes M_m$, the map $\operatorname{Ad} v$ restricts to an automorphism of $M_m(\tilde{\mathbb{I}}_p)$. Let $u=u^\sigma$ be an implementing path for $B_\sigma$. Conjugation by $v$ preserves the diagonal fibres outside $[1/3,2/3]$. On the middle interval, since $\rho_2$ normalizes $D_p\otimes D_m$, we have
\[v_tu_t(D_p\otimes D_m)u_t^*v_t^*=v_tu_t\rho_2^{-1}(D_p\otimes D_m)\rho_2u_t^*v_t^*.\]
Therefore the relevant implementing path is $w_t=v_tu_t\rho_2^{-1},t\in[1/3,2/3]$.
Its endpoint values are $w_{1/3}=\rho_1\sigma\rho_2^{-1}$, $w_{2/3}=1$. Consequently, $\operatorname{Ad} v(B_\sigma)
 =B_{\rho_1\sigma\rho_2^{-1}}.$
 
Conversely, suppose that $\alpha\in\operatorname{Aut}(M_m(\tilde{\mathbb{I}}_p))$ and $\alpha(B_\sigma)=B_\pi$. By composing with the automorphism from Lemma~\ref{zhongxin}, we may assume that~$\alpha|_{C([0,1])}=\operatorname{id}$. Let $u_\sigma$ and $u_\pi$ be implementing paths for $B_\sigma$ and $B_\pi$, respectively. On $[1/3,2/3]$, the automorphism $\beta=(\operatorname{Ad} u_\pi)^{-1}\circ\alpha\circ\operatorname{Ad} u_\sigma$ preserves~$C([1/3,2/3],D_p\otimes D_m)$. It is therefore implemented fibrewise by a continuous monomial unitary of the form
\begin{equation*}
\tau\,\operatorname{diag}
\bigl(\lambda_{1,1}(t),\ldots,\lambda_{p,m}(t)\bigr),
\end{equation*}
where each $\lambda_{i,j}$ is a continuous $\mathbb{T}$-valued function and
$\tau\in\operatorname{Sym}(\underline p\times\underline m)$.  Since the interval is connected and the permutation group is discrete, $\tau$ is independent of $t$. For a diagonal-preserving fibre automorphism $\gamma$, write $[\gamma]$ for the permutation part of a monomial implementation. On $(0,1/3]$, the automorphism $\alpha$ preserves the standard diagonal, so its permutation part is constant; denote it by~$[\alpha_t]=\rho_0$,~$t\in(0,1/3]$. Similarly, there is $\rho_1\in\operatorname{Sym}(\underline{p}\times \underline{m})$ such that~$[\alpha_t]=\rho_1$,~$t\in[2/3,1)$. We claim that~$\rho_0,\rho_1\in K_{p,m}$.

We prove this for $\rho_0$. Since $\alpha_0$ preserves the endpoint diagonal $1_p\otimes D_m$, there exist~$b\in\operatorname{Sym}(m)$ and $a_j\in\operatorname{Sym}(p)$,~$ j\in\underline m$, such that $\rho_0(i,j)=(a_j(i),b(j))$. On the other hand, $\alpha_0$ is an automorphism of the full endpoint algebra $1_p\otimes M_m$. Hence, for every $j,k\in\underline m$, there is $\mu_{jk}\in\mathbb{T}$ such that $\alpha_0(1_p\otimes e_{jk})=\mu_{jk}(1_p\otimes e_{b(j),b(k)}).$ For $t\in(0,1/3]$, a monomial implementation of $\alpha_t$ gives
\begin{equation*}
\alpha_t(1_p\otimes e_{jk}) = \sum_{i=1}^{p}c_i^{jk}(t) e_{a_j(i),a_k(i)}\otimes e_{b(j),b(k)},
\qquad |c_i^{jk}(t)|=1.
\end{equation*}
As $t\to0$, the right-hand side converges in norm to $\alpha_0(1_p\otimes e_{jk})$. Therefore no off-diagonal matrix unit can occur in the first tensor factor. It follows that
\[a_j(i)=a_k(i)\qquad(i\in\underline p,\ j,k\in\underline m). \]
Thus all the $a_j$ are equal to one permutation $a\in\operatorname{Sym}(p)$, and hence $\rho_0(i,j)=(a(i),b(j))$. Consequently, $\rho_0\in\operatorname{Sym}(p)\times\operatorname{Sym}(m)=K_{p,m}$. The same argument at the right endpoint shows that $\rho_1\in K_{p,m}$.

We now compare the permutation part of $\beta$ at the endpoints of the middle interval. Since $u_\sigma(1/3)=\sigma$, $u_\pi(1/3)=\pi$, we have $[\beta_{1/3}]=\pi^{-1}\rho_0\sigma.$ At $2/3$, both implementing paths are equal to $1$, and hence $[\beta_{2/3}]=\rho_1$. The permutation part of $\beta$ is constant, so $\pi^{-1}\rho_0\sigma=\rho_1.$ Equivalently, $\sigma=\rho_0^{-1}\pi\rho_1 \in K_{p,m}\pi K_{p,m}.$

Finally, suppose that $\alpha$ reverses orientation. By the reversal observation above, there is an orientation-reversing automorphism carrying $B_\pi$ onto $B_{\pi^{-1}}$. Composing it with $\alpha$ gives an orientation-preserving automorphism carrying $B_\sigma$ onto $B_{\pi^{-1}}$. The orientation-preserving case therefore yields $\sigma\in K_{p,m}\pi^{-1}K_{p,m}$.
\end{proof}

\begin{theorem}\label{feituihua}
Every Cartan subalgebra of $M_m(\tilde{\mathbb{I}}_p)$ is non-degenerate if and only if $\gcd(m,p)=1$.
\end{theorem}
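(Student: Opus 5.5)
Both directions reduce to what happens in the endpoint fibres. Let $B\subseteq A=M_m(\tilde{\mathbb I}_p)$ be a Cartan subalgebra, and for $t\in[0,1]$ write $B(t)$ for the image of $B$ under evaluation at $t$. First we record a fibrewise description of $B$.

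\textbf{Fibrewise description.} In the interior, $B$ restricts to a Cartan subalgebra of a homogeneous algebra over an interval. By \cref{qici}, each interior fibre $B(t)$ is a MASA of $M_p\otimes M_m$, of dimension $pm$. At the endpoint $0$, the fibre $B(0)$ is an abelian subalgebra of $1_p\otimes M_m\cong M_m$, so $\dim B(0)\le m$.

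\textbf{Sufficiency ($\gcd(m,p)=1$).} We show that $B(0)$ must be a MASA of the endpoint algebra; the argument at $1$ is identical.

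1. Write $B(0)\cong\mathbb C^k$ with minimal projections $q_1,\dots,q_k$. In $1_p\otimes M_m$ these have the form $1_p\otimes e_l$, where $e_l\in M_m$ has rank $r_l$ and $\sum_l r_l=m$.
2. By semiprojectivity of $\mathbb C^k$, as in the proof of \cref{danbianweiyi}, lift the $q_l$ to orthogonal projections $f_l\in B_{[0,\varepsilon]}$ with $f_l(0)=q_l$. The rank of $f_l(t)$ in $M_p\otimes M_m$ is then $pr_l$ for all small $t$.
3. The corner $f_lAf_l$ near $0$ is a one-sided dimension drop algebra $J_{r_l,p}$. The corner $f_lBf_l$ is a Cartan subalgebra of it whose fibre at $0$ is one-dimensional.

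So the task reduces to showing that such a Cartan subalgebra forces $r_l=1$, and we need the hypothesis $\gcd(m,p)=1$ to obtain this.

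4. Use regularity. Near $0$, the normalizers of $f_lBf_l$ induce a groupoid whose orbits on the $pr_l$ interior strands all collapse to a single endpoint character. Since $f_lBf_l$ is maximal abelian, the strands over $(0,\varepsilon]$ are permuted freely, with orbit size $pr_l$.
5. Now apply the conditional expectation $E$ at $t\to0$. It must restrict to a faithful state-valued expectation on the fibre, and by continuity this state is the normalized trace on $M_{r_l}$. This tells us the endpoint fibre of the corner is $1_p\otimes M_{r_l}$, with expectation onto $\mathbb C$.
6. Compare with the full endpoint algebra: the expectation $E(0)$ on $1_p\otimes M_m$ is a conditional expectation onto $B(0)$. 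Comparing trace weights, the limit of the diagonal strands gives $\mathrm{tr}(q_l)=r_l/m$. The combinatorial count of how many interior strands of $B$ converge to $q_l$ gives $\mathrm{tr}(q_l)=(\text{integer})/(pm)$ with the integer a multiple of $p$ coming from $1_p$.

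\textbf{Expected main obstacle.} Steps 4–6 do not yet give a valid contradiction. I am genuinely unsure how coprimality enters, and this is where I expect the difficulty to lie. My first attempt would be a $K$-theoretic argument. The projection $f_l$ defines a class in $K_0(A)$, and its rank function is $r_l$ at the endpoints and $pr_l$ inside. The Cartan structure should force the diagonal projections of $B$ near $0$ to split $f_l$ into subprojections of interior rank $1$ that converge in the endpoint fibre. Their endpoint limits would then be rank-one projections of $1_p\otimes M_m$, whose interior rank is $p$ and not $1$, which is a contradiction. That contradiction would hold unconditionally, though, so the argument must be refined: rank-one limits need not exist, and only groupings of $p$ strands do. The counting should then show that $B(0)$ has a minimal projection of the form $1_p\otimes e$ where $e$ has rank $r$ with $p\mid pr$, which is automatic. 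So coprimality must enter globally, by comparing both endpoints through the interior permutation. I would check this against small cases such as $m=p=2$ before committing.

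\textbf{Necessity ($d=\gcd(m,p)>1$).} Construct a degenerate Cartan subalgebra explicitly.

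1. Write $p=dp'$ and $m=dm'$. Using $M_p\otimes M_m\cong M_{p'}\otimes M_d\otimes M_d\otimes M_{m'}$, choose a unitary path that near $0$ rotates the standard diagonal into one of the form $D_{p'}\otimes\Delta\otimes D_{m'}$. Here $\Delta\subseteq M_d\otimes M_d$ is a MASA compatible with $1_d\otimes M_d$, for instance the diagonal of the flip-twisted coordinates.
2. Check that $\Delta$ meets $1_d\otimes M_d$ in dimension less than $d$. This gives $\dim B(0)<m$.
3. Verify the Cartan properties locally, as in \cref{biaozhun}.
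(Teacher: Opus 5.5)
\textbf{There is a genuine gap in the sufficiency direction, and the necessity construction fails as written.}

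\textbf{Sufficiency.} Steps 4--6 only produce trace identities such as $\operatorname{tr}(q_l)=r_l/m=pr_l/(pm)$. These hold automatically and give no contradiction, as you concede.

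Your step 3 also discards information you need. A corner $J_{r,p}$ with $r\mid p$ does admit Cartan subalgebras with one-dimensional endpoint fibre. So $r_l=1$ cannot be proved corner by corner.

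Coprimality also does not enter globally through the interior permutation. The paper restricts $B$ to a small interval at one endpoint, views it as a Cartan subalgebra of $J_{m,p}$, and invokes the divisibility argument of \cite[Theorem~4.5]{BR}. An argument that closes your gap uses the two devices from the proof of Lemma~\ref{duandian}.

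\emph{Equal ranks.} Let $q=1_p\otimes e$ and $q'=1_p\otimes e'$ be minimal projections of $B(0)$, and let $n\in N_A(B)$. Then $qn(0)q'$ normalizes $B(0)$. If it is nonzero, it is a multiple of a partial isometry from $q'$ to $q$, so $\operatorname{rank}e=\operatorname{rank}e'$. Since $qA(0)q'\neq0$ and regularity gives $A(0)=\overline{\operatorname{span}}\{n(0)\}$, all minimal projections of $B(0)$ have a common rank $r$. Hence $r\mid m$.

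\emph{Limit MASA.} Take $D=\lim_j B(t_j)$ as in your step 5. Then $E_0=E_D|_{A(0)}$, so $B(0)\subseteq D$. Every rank-one projection $\xi\xi^*\le q$ in $D$ satisfies $\langle\xi,(1_p\otimes y)\xi\rangle=\operatorname{Tr}(y)/r$ for $y\in eM_me$. Thus $\xi$ is maximally entangled in $\mathbb C^p\otimes\operatorname{ran}(e)$.

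\emph{Block count.} The values $qn(0)q$ normalize $D$, since limits of monomial matrices are monomial, and they span $1_p\otimes eM_me$. Hence $\{(1_p\otimes y)\xi: y\in eM_me\}=W\otimes\operatorname{ran}(e)$, with $\dim W=r$, is spanned by minimal projections of $D$. Two such blocks sharing a basis vector coincide, so the blocks partition an orthonormal basis of $\mathbb C^p\otimes\operatorname{ran}(e)$. Counting dimensions, $r^2\mid pr$, so $r\mid p$.

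Therefore $r\mid\gcd(m,p)$. This is the missing divisibility, and it is entirely local at one endpoint.

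\textbf{Necessity.} The flip on $\mathbb C^d\otimes\mathbb C^d$ is a permutation unitary, so the flip-twisted diagonal is just $D_d\otimes D_d$. It meets $1_d\otimes M_d$ in $1_d\otimes D_d$, which has dimension $d$. So your step 2 is false for this $\Delta$.

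You need $\Delta$ spanned by a maximally entangled orthonormal basis, such as $(1_d\otimes X^aZ^b)\Omega$, where $X$ is the cyclic shift, $Z$ the clock matrix, and $\Omega$ the standard maximally entangled vector. Then:
\begin{enumerate}
\item $\Delta\cap(1_d\otimes M_d)=\mathbb C$;
\item $E_\Delta(1_d\otimes y)=\operatorname{tr}(y)1$;
\item the unitaries $1_d\otimes X^aZ^b$ normalize $\Delta$ and span $1_d\otimes M_d$.
\end{enumerate}
These properties supply the expectation and regularity at the degenerate endpoint, which are not checked as in Proposition~\ref{biaozhun}. This is the orthogonal-MASA construction of \cite[Examples~4.3--4.4]{BR} that the paper then glues to the standard diagonal at the other end.
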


\begin{proof}
Restrict a Cartan subalgebra to a sufficiently small interval adjacent to an endpoint and interchange the tensor factors.  This gives a Cartan subalgebra of $J_{m,p}$.  The divisibility argument of \cite[Theorem~4.5]{BR} shows that a degenerate endpoint fibre forces a nontrivial common divisor of $m$ and $p$.
	
Conversely, let $d>1$ divide both $m$ and $p$.  The finite-dimensional orthogonal-MASA construction of \cite[Examples~4.3--4.4]{BR} gives a degenerate Cartan subalgebra in $J_{m,p}$.  Glue it, over a middle interior fibre, to a continuous conjugate of $D_p\otimes D_m$ which becomes the standard endpoint diagonal at the other end.  The resulting subalgebra of $M_m(\tilde{\mathbb{I}}_p)$ is Cartan and remains degenerate at the first endpoint.
\end{proof}

\begin{remark}\label{duijiao}
If $m$ and $p$ are relatively prime, then every Cartan subalgebra of $M_m(\tilde{\mathbb{I}}_p)$ is a $C^*$-diagonal. This follows from a fibrewise argument analogous to the proof of \cite[Proposition~4.1]{BR}.
\end{remark}

\begin{proposition}\label{weiyi}
For every $p$, the algebra $\tilde{\mathbb{I}}_p$ has a unique Cartan subalgebra up to conjugacy.
\end{proposition}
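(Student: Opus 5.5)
The case $m=1$ lets us reduce everything to the permutation classification already obtained. Here $\tilde{\mathbb I}_p=M_1(\tilde{\mathbb I}_p)$ and $\gcd(1,p)=1$, so by \cref{feituihua} every Cartan subalgebra of $\tilde{\mathbb I}_p$ is non-degenerate. By \cref{biaozhunxing}, every Cartan subalgebra is then conjugate to a twisted standard Cartan subalgebra $B_\sigma$ for some $\sigma\in\operatorname{Sym}(\underline p\times\underline 1)$. The remaining task is to show that all these $B_\sigma$ are conjugate to one another, and in particular to $B_{\mathrm{st}}=B_{\mathbf 1}$.

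For this we invoke \cref{fenlei} with $m=1$. The group $K_{p,1}=\operatorname{Sym}(p)\times\operatorname{Sym}(1)$ acts on $\underline p\times\underline 1\cong\underline p$ as the full symmetric group. Hence $K_{p,1}$ is all of $\operatorname{Sym}(\underline p\times\underline 1)$, and the double coset $K_{p,1}\pi K_{p,1}$ is the whole group for every $\pi$. Thus $\sigma\in K_{p,1}\mathbf 1K_{p,1}$ for every $\sigma$, and \cref{fenlei} gives that $B_\sigma$ is conjugate to $B_{\mathbf 1}$.

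The conjugacy can also be seen directly, which is a useful sanity check. Choose a unitary path $v$ with $v_t=\sigma$ on $[0,1/3]$ and $v_t=1$ on $[2/3,1]$. Since the endpoint algebra is just $\mathbb C1_p$, any unitary normalizes it, so $\operatorname{Ad}v$ is an automorphism of $\tilde{\mathbb I}_p$ carrying $B_{\mathbf 1}$ onto $B_\sigma$. Combining the three steps, every Cartan subalgebra of $\tilde{\mathbb I}_p$ is conjugate to $B_{\mathrm{st}}$.

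I expect no real obstacle. The only point needing care is that the identification of $\operatorname{Sym}(\underline p\times\underline 1)$ with $K_{p,1}$ is compatible with the coordinatewise action used in \cref{fenlei}, and this is immediate because the second coordinate is trivial.
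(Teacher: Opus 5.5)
Your proposal is correct and follows essentially the same argument as the paper. Theorem~\ref{feituihua} with $\gcd(1,p)=1$ gives non-degeneracy, and $K_{p,1}=\operatorname{Sym}(\underline p\times\underline 1)$ collapses the double-coset classification of Theorem~\ref{fenlei} to a single class; you also cite Theorem~\ref{biaozhunxing} explicitly where the paper leaves it implicit, and your direct conjugation by $v$ is just the easy direction of Theorem~\ref{fenlei}.
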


\begin{proof}
Here $K_{p,1}=\operatorname{Sym}(p)$, so \cref{fenlei} gives one non-degenerate class.  Moreover, $\gcd(1,p)=1$, so \cref{feituihua} shows that every Cartan
subalgebra is non-degenerate.
\end{proof}

\begin{proposition}\label{wuge}
The algebra $M_2(\widetilde{\mathbb I}_2)$ has exactly five conjugacy classes of non-degenerate Cartan subalgebras.
\end{proposition}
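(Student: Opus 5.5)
By Theorem~\ref{biaozhunxing}, every non-degenerate Cartan subalgebra of $M_2(\widetilde{\mathbb I}_2)$ is conjugate to some $B_\sigma$ with $\sigma\in\operatorname{Sym}(\underline 2\times\underline 2)\cong\operatorname{Sym}(4)$. By Theorem~\ref{fenlei}, the conjugacy classes therefore correspond to the orbits of $\operatorname{Sym}(4)$ under the equivalence relation generated by $\sigma\sim\rho_1\sigma\rho_2$ with $\rho_1,\rho_2\in K_{2,2}$, together with $\sigma\sim\sigma^{-1}$. The proof thus reduces to a finite group-theoretic count, which I would carry out in three steps.

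\emph{Identify $K_{2,2}$ inside $\operatorname{Sym}(4)$.} Label the points of $\underline 2\times\underline 2$ as $(1,1),(1,2),(2,1),(2,2)$. The three nontrivial elements of $K_{2,2}=\operatorname{Sym}(2)\times\operatorname{Sym}(2)$ act as follows:
\begin{itemize}
\item $(\text{swap},\mathrm{id})$ gives $((1,1)\,(2,1))((1,2)\,(2,2))$;
\item $(\mathrm{id},\text{swap})$ gives $((1,1)\,(1,2))((2,1)\,(2,2))$;
\item $(\text{swap},\text{swap})$ gives $((1,1)\,(2,2))((1,2)\,(2,1))$.
\end{itemize}
These are exactly the three double transpositions, so $K_{2,2}$ is the Klein four-group $V\trianglelefteq\operatorname{Sym}(4)$. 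Because $V$ is normal, each double coset collapses to a single coset: $V\pi V=\pi V$. The double cosets are therefore the six elements of the quotient $\operatorname{Sym}(4)/V\cong\operatorname{Sym}(3)$.

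\emph{Account for inversion.} Since $V$ is normal, $(\pi V)^{-1}=\pi^{-1}V$. So the relation $\sigma\sim\sigma^{-1}$ descends to inversion on $\operatorname{Sym}(3)$.
\begin{itemize}
\item The identity and the three transpositions of $\operatorname{Sym}(3)$ are involutions, so each is its own class.
\item The two $3$-cycles are inverse to each other, so they merge into one class.
\end{itemize}
This gives $1+3+1=5$ classes.

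For definiteness, representatives can be taken to be $\sigma=1$, the transpositions $((1,2)\,(2,1))$, $((1,1)\,(1,2))$ and $((1,1)\,(2,1))$ (these lie in distinct cosets of $V$), and one $3$-cycle, say $((1,1)\,(1,2)\,(2,1))$. I would check directly that each listed element maps to the intended element of $\operatorname{Sym}(3)$.

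The only real obstacle is bookkeeping: confirming that the coordinatewise action of $K_{2,2}$ really is the normal Klein subgroup, and not a non-normal copy of $C_2\times C_2$ generated by disjoint transpositions. The listing of the three nontrivial elements above settles this.

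The conclusion concerns only non-degenerate Cartan subalgebras, as the statement requires. No appeal to Theorem~\ref{feituihua} is needed; indeed $\gcd(2,2)\neq1$, so degenerate Cartan subalgebras also exist in $M_2(\widetilde{\mathbb I}_2)$.
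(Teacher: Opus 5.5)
Your proposal is correct and follows essentially the same route as the paper: you reduce via Theorems~\ref{biaozhunxing} and~\ref{fenlei} to double cosets of $K_{2,2}$, identify $K_{2,2}$ as the normal Klein four-group so that double cosets become the six cosets of $\operatorname{Sym}(4)/K_{2,2}\cong\operatorname{Sym}(3)$, and count inversion orbits as $1+3+1=5$. The only cosmetic difference is that the paper constructs the isomorphism with $\operatorname{Sym}(3)$ explicitly, via the action on the three pair-partitions of $\{1,2,3,4\}$, whereas you quote it; this is harmless, since the inversion-orbit count depends only on the abstract group $\operatorname{Sym}(3)$.
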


\begin{proof}
We identify $\underline 2\times\underline 2$~with $\{1,2,3,4\}$ by setting
\[1=(1,1),\qquad2=(2,1),\qquad3=(1,2),\qquad4=(2,2).\]
Under this identification,~$\operatorname{Sym}(\underline{2}\times \underline{2})\cong S_4$ and $K_{2,2}=\{e,(12)(34),(13)(24),(14)(23)\}$. Thus $K_{2,2}$ is a normal subgroup of $S_4$. We next identify the quotient $S_4/K_{2,2}$.  Consider the three partitions of $\{1,2,3,4\}$ into two unordered pairs:
\[\mathcal P_1=\{\{1,2\},\{3,4\}\},\qquad\mathcal P_2=\{\{1,3\},\{2,4\}\},\qquad\mathcal P_3=\{\{1,4\},\{2,3\}\}.\]
We therefore obtain a group homomorphism $\theta:S_4\longrightarrow \operatorname{Sym}\{\mathcal P_1,\mathcal P_2,\mathcal P_3\} \cong S_3.$ Obviously, $\ker\theta=K_{2,2}$. By the first isomorphism theorem, we have $S_4/K_{2,2}\cong S_3$. The six elements of $S_4/K_{2,2}\cong S_3$ may be represented by
\[e,\qquad (12),\qquad (13),\qquad (14),\qquad(123),\qquad (132). \]
	
By \cref{fenlei}, two twists determine conjugate Cartan subalgebras precisely when their cosets in $S_4/K_{2,2}$ are equal or inverse.  We must therefore compute the orbits of the inversion map $xK_{2,2}\longmapsto x^{-1}K_{2,2}$. The identity coset is fixed. Each of the three transposition cosets is also fixed. On the other hand, $(123)^{-1}=(132)$, so the two three-cycle cosets form one inversion orbit. Therefore $M_2(\widetilde{\mathbb I}_2)$ has exactly five conjugacy classes of non-degenerate Cartan subalgebras.
\end{proof}

\begin{remark}
Proposition~\ref{wuge} gives a counterexample to \cite[Theorem~5.10]{BR}. Indeed, let $F\in U(4)$ be the tensor flip and choose a continuous path $w\colon[0,1]\to U(4)$ with $w(0)=F$ and $w(1)=1$. Then $\operatorname{Ad}w$ identifies $M_2(\widetilde{\mathbb I}_2)$ with the algebra $I_{2,2}$ of \cite{BR}. The equivalence relation stated there yields two non-degenerate Cartan conjugacy classes, whereas Proposition~\ref{wuge} gives five.
	
The issue is that a permutation preserving the endpoint diagonal need not normalize the full endpoint algebra. For example, put
\[ s=\begin{pmatrix}0&1\\1&0\end{pmatrix},
\qquad P=s\otimes e_{11}+1_2\otimes e_{22}. \]
Then
\[ P(1_2\otimes D_2)P^*=1_2\otimes D_2, \]
but
\[ P(1_2\otimes e_{12})P^*=s\otimes e_{12}\notin1_2\otimes M_2. \]
Thus the endpoint permutation groups in the cited theorem must be restricted to preserve the full endpoint algebras.
\end{remark}

\subsection{Spectra of non-degenerate Cartan subalgebras}

\begin{theorem}\label{pu}
Let $\sigma\in\operatorname{Sym}(\underline{p}\times \underline{m})$.  Then $\operatorname{Spec}(B_\sigma)$ is homeomorphic to
\[
 X_\sigma=
 \bigl(\underline{p}\times \underline{m}\times([0,1]_-\sqcup[1,2]_+)\bigr)/{\sim},
\]
where $\sim$ is generated by
\begin{align*}
 (i,j,0)&\sim(i',j,0),\qquad i,i'\in\underline p,\ j\in\underline m,\\
 (i,j,2)&\sim(i',j,2),\qquad i,i'\in\underline p,\ j\in\underline m,\\
 (\sigma(i,j),1_-)&\sim(i,j,1_+),\qquad (i,j)\in\underline{p}\times \underline{m}.
\end{align*}
\end{theorem}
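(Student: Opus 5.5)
Since $B_\sigma$ is a unital commutative $\mathrm{C}^*$-algebra, I identify its spectrum with its set of characters. I describe the characters fibrewise over the centre $C([0,1])$, which sits inside $B_\sigma$. Every character of $B_\sigma$ restricts to a point evaluation $t\in[0,1]$ on the centre, so it factors through the fibre $B_\sigma(t)$. Hence $\operatorname{Spec}(B_\sigma)=\bigsqcup_t \operatorname{Spec}(B_\sigma(t))$ as a set. The topology is then determined by the continuous functions in $B_\sigma$.

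\textbf{Step 1: the fibres.} Write $u=u^\sigma$ as in Definition~\ref{niuzhuan}.
\begin{itemize}
\item For $t\in(0,1)$, the fibre is $B_\sigma(t)=u_t(D_p\otimes D_m)u_t^*$. Its characters are the maps $x\mapsto (u_t^*xu_t)_{(i,j),(i,j)}$, labelled by $(i,j)\in\underline p\times\underline m$.
\item At $t=0$ and $t=1$, the fibre is $1_p\otimes D_m$. It has $m$ characters, labelled by $j$.
\end{itemize}

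\textbf{Step 2: the map from the model space.} To avoid a clash of notation, I reparametrise the model interval $[0,2]$ by the map $s\mapsto t$ with $[0,1]_-\to[0,1/3]\cup[1/3,\ldots]$. Concretely, I use a homeomorphism $h:[0,2]\to[0,1]$ with $h(1)=1/3$. Using $u_t=\sigma$ on $[0,1/3]$ would give the left piece in "$\sigma$-coordinates". It is cleaner to proceed as follows.
\begin{itemize}
\item On $[0,1/3]$, the diagonal is $\sigma(D_p\otimes D_m)\sigma^*=D_p\otimes D_m$. I use the untwisted labels there, sending $(k,l,s)$ with $s\in[0,1]_-$ to evaluation at $t=s/3$ of the $(k,l)$ diagonal entry. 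Here $(k,l)$ ranges over $\underline p\times\underline m$.
\item On the piece $[1,2]_+$, I send $(i,j,s)$ to the character $x\mapsto (u_t^*xu_t)_{(i,j),(i,j)}$ with $t=1/3+(s-1)/3\cdot 2$. This runs over $[1/3,1]$, where $u_t=1$ for $t\ge 2/3$, so again the labels are the standard diagonal labels near $1$.
\end{itemize}
Each piece is continuous in $s$, because $u$ and the elements of $B_\sigma$ are continuous.

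\textbf{Step 3: matching at $t=1/3$.} We have $u_{1/3}=\sigma$. The character labelled $(i,j)$ in the "$+$" coordinates is $x\mapsto(\sigma^*x\sigma)_{(i,j),(i,j)}=x_{\sigma(i,j),\sigma(i,j)}$, which is the "$-$" label $\sigma(i,j)$. This is exactly the relation $(\sigma(i,j),1_-)\sim(i,j,1_+)$.

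\textbf{Step 4: matching at the endpoints.} At $t=0$, the diagonal entries $(i,j)$ and $(i',j)$ of any $f\in B_\sigma$ coincide, because $f(0)\in1_p\otimes D_m$. So these characters are identified, and the $m$ classes are exactly the $m$ characters of the endpoint fibre. The same holds at $t=1$. This gives the first two relations.

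\textbf{Step 5: homeomorphism.} I obtain a continuous surjection $X_\sigma\to\operatorname{Spec}(B_\sigma)$. It is injective by the fibrewise count: for fixed $t$, distinct classes give distinct characters, since the diagonal fibre separates its $pm$ (or $m$) points. Since $X_\sigma$ is compact and $\operatorname{Spec}(B_\sigma)$ is Hausdorff, the map is a homeomorphism.

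The main obstacle is justifying continuity of the map $X_\sigma\to\operatorname{Spec}(B_\sigma)$ in the weak-$*$ topology. In particular, one must check that the pieces glue correctly at the endpoints. This amounts to showing that each $f\in B_\sigma$ yields continuous functions $s\mapsto(u_t^*f(t)u_t)_{(i,j),(i,j)}$ on each piece. That holds since $u$ is norm-continuous.

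Alternatively, one can compute $B_\sigma$ directly. It is isomorphic to the algebra of tuples $(g_{i,j})\in C([0,1])^{pm}$ such that, on the left portion, the entries along the coordinates agree at $0$ and at $1$, twisted by $\sigma$ at $1/3$. This is $C(X_\sigma)$ by construction. The identification uses the fact that the map $x\mapsto u^*xu$ is a $*$-isomorphism from $B_\sigma$ onto the subalgebra of $C([0,1],D_p\otimes D_m)$ with the transported endpoint constraints. I would present this functional version as the proof, since it makes continuity automatic.
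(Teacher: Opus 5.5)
Your Steps 1--5 are correct and follow the same route as the paper. You label the strands by the diagonal characters $x\mapsto(u_t^*xu_t)_{(i,j),(i,j)}$ and read off the identifications at $t=0$, $1/3$, $1$, and your fibrewise description of characters, continuity check, and compact-to-Hausdorff bijection argument supply point-set details that the paper's sketch leaves implicit. If you instead present the ``functional'' version, note that after conjugating globally by $u^*$ the $\sigma$-twist is no longer at $1/3$ but moves into the constraint at $t=0$ (the entries indexed by $k,k'$ must agree at $0$ exactly when $\operatorname{pr}_2\sigma(k)=\operatorname{pr}_2\sigma(k')$), so a short reparametrisation is needed to match $X_\sigma$.
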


\begin{proof}
Over either outer interval, the spectrum consists of $pm$ open strands whose
$p$ points with the same second coordinate are identified at the endpoint. On the middle interval, the minimal projection field indexed by $(i,j)$ is
\[ u_t^\sigma(e_{ii}\otimes e_{jj})(u_t^\sigma)^*. \]
At $1/3$ its label is $\sigma(i,j)$, and at $2/3$ its label is $(i,j)$.  These are exactly the displayed identifications.
\end{proof}

\begin{definition}
In the model \cref{pu}, write $(\ast,j,0)$ and $(\ast,j,2)$ for the endpoint equivalence classes.
\end{definition}

\begin{definition}\label{yuehua}
Let $\sigma\in\operatorname{Sym}(\underline{p}\times \underline{m})$.  Its permutation matrix is indexed by $\underline p\times\underline m$ and has entries
\[ \sigma_{(i,j),(i',j')}=\delta_{\sigma(i',j'),(i,j)}. \]
The reduced matrix is
$R(\sigma)=(R_{jk}(\sigma))\in M_m(\mathbb{Z}_{\geq0})$, where
\begin{equation}\label{yuehuashi}
R_{jj'}(\sigma) =\sum_{i,i'\in\underline p}\sigma_{(i,j),(i',j')}.
\end{equation}
\end{definition}
For convenience, let~$a,b,c,d\in\mathbb{N}$, set
\[
\mathcal M(a,b,c,d)
=\left\{A\in M_{a,c}(\mathbb Z_{\geq0}):
\begin{array}{l}
	\text{every row sum of $A$ equals $b$, and}\\
	\text{every column sum of $A$ equals $d$}
\end{array}\right\}.
\]
Let $\Gamma_\sigma$ be the bi-partite multigraph with row vertices $v_1,\ldots,v_m$, column vertices $w_1,\ldots,w_m$, and $R_{jk}(\sigma)$ edges from $v_j$ to $w_k$. Let $A,B\in M_{m,n}(\mathbb{Z}_{\geq0})$ be two matrices and consider their associated bi-partite graphs $\Gamma$, $\Lambda$, respectively. Permuting rows or columns merely relabels the vertices on the corresponding side of the bi-partite multigraph, so its isomorphism class does not change.

\begin{definition}
The geometric realization $|\Gamma|$ of a graph $\Gamma$ is the one-dimensional CW complex obtained by taking the vertices as $0$-cells and attaching one $1$-cell for every edge according to its incidence relation.
\end{definition}

\begin{theorem}\label{putu}
Let $\sigma\in\operatorname{Sym}(\underline{p}\times \underline{m})$ and let $\Gamma_\sigma$ be the bi-partite graph whose adjacency matrix is the reduced matrix of $\sigma$. Then
\[
\operatorname{Spec}(B_\sigma)\cong|\Gamma_\sigma|.
\]
\end{theorem}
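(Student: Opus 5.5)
Starting from the model $X_\sigma$ of Theorem~\ref{pu}, we will identify it directly with a CW structure. We take as $0$-cells the $m$ left endpoint classes $(\ast,j,0)$ and the $m$ right endpoint classes $(\ast,k,2)$. All other points lie on strands. In $X_\sigma$, each of the $pm$ strands $\{(i',j')\}\times[1,2]_+$ is glued at $1$ to the strand $\{\sigma(i',j')\}\times[0,1]_-$, and no other interior identifications occur. So the union of a left strand with the right strand glued to it is an arc $[0,2]$. Its interior embeds in $X_\sigma$, its endpoint $0$ goes to $(\ast,j,0)$ with $(i,j)=\sigma(i',j')$, and its endpoint $2$ goes to $(\ast,j',2)$. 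Since $\sigma$ is a bijection, these $pm$ arcs are pairwise disjoint away from their endpoints and cover $X_\sigma$. Thus $X_\sigma$ is a graph with vertex set $\{(\ast,j,0)\}\sqcup\{(\ast,k,2)\}$ and one edge for each $(i',j')\in\underline p\times\underline m$.

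Next we count edges. The edge indexed by $(i',j')$ joins $v_j$ to $w_{j'}$, where $j=\operatorname{pr}_2\sigma(i',j')$. Hence the number of edges from $v_j$ to $w_{j'}$ is
\[
\#\{(i,i')\in\underline p\times\underline p:\sigma(i',j')=(i,j)\}=\sum_{i,i'}\sigma_{(i,j),(i',j')}=R_{jj'}(\sigma),
\]
which is exactly \eqref{yuehuashi}. Sending $v_j\mapsto(\ast,j,0)$, $w_k\mapsto(\ast,k,2)$, and each edge to the corresponding arc via a homeomorphism $[0,1]\cong[0,2]$ gives a continuous bijection $|\Gamma_\sigma|\to X_\sigma$.

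To finish, we check that this bijection is a homeomorphism. Both spaces are compact and Hausdorff: $|\Gamma_\sigma|$ is a finite CW complex, and $X_\sigma$ is the spectrum of a unital commutative C$^*$-algebra by Theorem~\ref{pu}. A continuous bijection between such spaces is a homeomorphism. One could instead note that both spaces are quotients of the same finite disjoint union of intervals by the same finite relation, so the map is one of quotient spaces.

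The only point needing care is the orientation convention. The row index of $R(\sigma)$ must correspond to the left endpoint label, that is, to $\operatorname{pr}_2\sigma(i',j')$, and the column index to the right endpoint label $j'$. This matches the convention $\sigma_{(i,j),(i',j')}=\delta_{\sigma(i',j'),(i,j)}$ together with the gluing $(\sigma(i,j),1_-)\sim(i,j,1_+)$. If the conventions were reversed we would get $R(\sigma)^T$, but that yields an isomorphic bipartite graph with the two sides swapped, so the geometric realization is the same anyway.
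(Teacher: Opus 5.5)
Your proposal is correct and follows essentially the same route as the paper. Both take the endpoint classes $(\ast,j,0)$, $(\ast,j,2)$ as $0$-cells and form the $1$-cell indexed by $(i',j')$ by gluing the left strand labelled $\sigma(i',j')$ to the right strand labelled $(i',j')$. This cell joins $(\ast,\operatorname{pr}_2\sigma(i',j'),0)$ to $(\ast,j',2)$, so counting gives exactly $R_{jj'}(\sigma)$ edges. Your compact--Hausdorff argument for the continuous bijection simply spells out the step the paper summarizes as extending the vertex bijection to an isomorphism of CW complexes.
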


\begin{proof}
Let $R(\sigma)$ be the reduced matrix of $\sigma$ and let $X=\operatorname{Spec}(B_\sigma)$. By \cref{pu}, the $0$-cells of $X$ are
\[ \{(\ast,j,0):j\in\underline m\}\cup \{(\ast,j,2):j\in\underline m\}. \]
Write $\sigma(i,j)=(\sigma_1(i,j),\sigma_2(i,j))$. The $1$-cell indexed by $(i,j)\in\underline{p}\times \underline{m}$ is
\[
\{\sigma(i,j)\}\times[0,1]_-
\ \cup\ 
\{(i,j)\}\times[1,2]_+,
\]
and joins $(\ast,\sigma_2(i,j),0)$ to $(\ast,j,2)$. The bi-partite graph $\Gamma_\sigma$ has vertices
\[
\{(j,r):j\in\underline m\}\cup\{(j,c):j\in\underline m\},
\]
with $R_{jj'}(\sigma)$ edges from $(j,r)$ to $(j',c)$. Map $(\ast,j,0)$ to $(j,r)$ and $(\ast,j,2)$ to $(j,c)$. The number of $1$-cells of $X$ joining $(\ast,j,0)$ to $(\ast,j',2)$ is
\begin{align*}
\bigl|\{(i',j'):\sigma_2(i',j')=j\}\bigr|
&=\sum_{i'\in\underline p}\delta_{\sigma_2(i',j'),j}\\
&=\sum_{i,i'\in\underline p}
\delta_{\sigma(i',j'),(i,j)}
=R_{jj'}(\sigma).
\end{align*}
Thus the vertex bijection preserves the number of edges between every pair of vertices and extends to an isomorphism of CW complexes.
\end{proof}

The reduced matrix of every $\sigma\in\operatorname{Sym}(\underline p\times\underline m)$ belongs to $\mathcal M(m,p,m,p)$. Conversely, every matrix in this set occurs.

\begin{theorem}\label{keda}
For every $R\in\mathcal M(m,p,m,p)$, there exists $\sigma\in\operatorname{Sym}(\underline p\times\underline m)$ whose reduced matrix is $R$.
\end{theorem}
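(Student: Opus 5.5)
Given $R\in\mathcal M(m,p,m,p)$, we build $\sigma$ by a direct bijection between blocks. By \eqref{yuehuashi}, $R_{jj'}(\sigma)$ is the number of $i'\in\underline p$ with $\sigma_2(i',j')=j$, i.e.\ the number of elements of the column $\underline p\times\{j'\}$ that $\sigma$ sends into the row block $\underline p\times\{j\}$. So it suffices to construct a bijection $\sigma$ of $\underline p\times\underline m$ such that, for every pair $(j,j')$, exactly $R_{jj'}$ elements of $\underline p\times\{j'\}$ land in $\underline p\times\{j\}$.

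The construction partitions both source and target blocks according to the entries of $R$. First, for each fixed $j'$, the column sum $\sum_j R_{jj'}=p$ lets us partition the source block $\underline p\times\{j'\}$ into disjoint subsets $S_{j,j'}$ with $|S_{j,j'}|=R_{jj'}$, for example consecutive intervals of $\underline p$ taken in the order $j=1,\dots,m$. Second, for each fixed $j$, the row sum $\sum_{j'}R_{jj'}=p$ lets us partition the target block $\underline p\times\{j\}$ into disjoint subsets $T_{j,j'}$ with $|T_{j,j'}|=R_{jj'}$. Third, we choose arbitrary bijections $S_{j,j'}\to T_{j,j'}$ and let $\sigma$ be their union.

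We then check that $\sigma$ is a permutation. The sets $S_{j,j'}$ over all pairs $(j,j')$ partition $\underline p\times\underline m$, and so do the sets $T_{j,j'}$. Since $\sigma$ is assembled from bijections between corresponding pieces of two partitions of the same finite set, it is a bijection. By construction, $\sigma$ maps exactly the $R_{jj'}$ elements of $S_{j,j'}\subseteq\underline p\times\{j'\}$ into $\underline p\times\{j\}$, so $R(\sigma)=R$.

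There is no real obstacle. The only point needing care is the orientation convention in \cref{yuehua}: the entry $\sigma_{(i,j),(i',j')}$ equals $1$ exactly when $\sigma(i',j')=(i,j)$, so the row index of $R$ records the target block and the column index records the source block. The argument is symmetric in rows and columns, so even with the convention reversed, the same partition argument applies with the roles of the two sums exchanged. This is the step where we would pay attention; everything else is the standard fact that a nonnegative integer matrix with all row and column sums equal to $p$ is a block-counting matrix of some permutation.
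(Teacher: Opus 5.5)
Your proposal is correct and matches the paper's proof essentially step for step. Like the paper, you partition each source block $\underline p\times\{j'\}$ by the column sums and each target block $\underline p\times\{j\}$ by the row sums into pieces of size $R_{jj'}$, glue arbitrary bijections between matching pieces, and use the same index convention (row index records the target block), giving the count $\#\{i:\operatorname{pr}_2(\sigma(i,b))=a\}=R(a,b)$.
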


\begin{proof}
Write $R=(R(a,b))_{a,b\in\underline m}$. For each $b\in\underline m$, put $S_b=\{(i,b):i\in\underline p\}$. Since the $b$-th column sum of $R$ is $p$, choose a partition 
\[ S_b=\bigsqcup_{a=1}^{m}S_{a,b}, \qquad
|S_{a,b}|=R(a,b).\]
Similarly, for each $a\in\underline m$, put $T_a=\{(i,a):i\in\underline p\}$. Since the $a$-th row sum of $R$ is $p$, choose a partition
\[ T_a=\bigsqcup_{b=1}^{m}T_{a,b},\qquad
|T_{a,b}|=R(a,b). \] 
For every $a,b$, choose a bijection $\theta_{a,b}\colon S_{a,b}\longrightarrow T_{a,b}$. The disjoint union of these bijections defines a permutation
\[ \sigma=\bigsqcup_{a,b}\theta_{a,b}
\in\operatorname{Sym}(\underline p\times\underline m). \]
For $a,b\in\underline m$, we then have
\[ \#\{i\in\underline p: \operatorname{pr}_2(\sigma(i,b))=a\} =|S_{a,b}| =R(a,b).
\]
Hence the reduced matrix of $\sigma$ is $R$.
\end{proof}

\begin{proposition}\label{m2}
The spectra of non-degenerate Cartan subalgebras of $M_m(\tilde{\mathbb{I}}_2)$ have exactly $m$ different homeomorphism types.
\end{proposition}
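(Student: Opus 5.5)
By Theorem~\ref{biaozhunxing}, Theorem~\ref{putu} and Theorem~\ref{keda}, the spectra of non-degenerate Cartan subalgebras of $M_m(\tilde{\mathbb I}_2)$ are exactly the realizations $|\Gamma_R|$ of the bipartite multigraphs with adjacency matrix $R\in\mathcal M(m,2,m,2)$. So the problem reduces to classifying the spaces $|\Gamma_R|$ up to homeomorphism, where $\Gamma_R$ is a $2$-regular bipartite multigraph on $m+m$ vertices. A finite $2$-regular multigraph is a disjoint union of cycles, and bipartiteness forces every cycle to have even length $2k$, with $k$ row vertices and $k$ column vertices. Here $k=1$ means a double edge, i.e.\ $R_{jj'}=2$. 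Thus each component of $|\Gamma_R|$ is homeomorphic to a circle, and $|\Gamma_R|$ is a disjoint union of $c$ circles with $1\le c\le m$.

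Since disjoint unions of $c$ circles and of $c'$ circles are homeomorphic iff $c=c'$ (count connected components), there are at most $m$ homeomorphism types. It remains to realize every $c\in\{1,\dots,m\}$. Split the $m$ row indices and $m$ column indices into $c$ consecutive blocks of sizes $k_1,\dots,k_c\ge1$ summing to $m$, and place a $2k_\ell$-cycle on each block pair. For $k_\ell=1$ this means the diagonal entry $2$. For $k_\ell\ge2$ it means the block $1_{k_\ell}+P_{k_\ell}$, where $P_{k_\ell}$ is the cyclic shift permutation matrix; this block has row and column sums $2$ and its graph is a single $2k_\ell$-cycle. The resulting block-diagonal $R$ lies in $\mathcal M(m,2,m,2)$. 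By Theorem~\ref{keda} it comes from some $\sigma$, and then $\operatorname{Spec}(B_\sigma)$ is a union of exactly $c$ circles.

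The main point to check carefully is that a $2$-regular multigraph component is really a circle, including the degenerate double-edge case. Under the CW realization, two $1$-cells glued along both endpoints give a circle, so this causes no issue. One should also note that every vertex has degree exactly $2$ (row and column sums equal $p=2$), so there are no isolated points or arcs.
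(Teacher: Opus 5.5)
Your proof is correct and follows the paper's route. You reduce via Theorems~\ref{biaozhunxing}, \ref{putu} and~\ref{keda} to $2$-regular bipartite multigraphs with adjacency matrix in $\mathcal M(m,2,m,2)$, split these into even cycles (the paper does this by putting the matrix in block-diagonal form with cycle blocks), and count components; your explicit block-diagonal realization of every $c\in\{1,\ldots,m\}$ spells out the existence half that the paper leaves implicit.
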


\begin{proof}
By \cref{biaozhunxing}, $B$ is conjugate to $B_\sigma$ for some $\sigma\in\operatorname{Sym}(\underline{2}\times \underline{m})$. Conjugacy preserves the spectrum, and hence $\operatorname{Spec}(B)\cong\operatorname{Spec}(B_\sigma)$. By \cref{putu}, $\operatorname{Spec}(B_\sigma)\cong|\Gamma_\sigma|$.

Recall that the row and column sums of the reduced matrix $R(\sigma)$ are equal to $p=2$. It suffices to count the matrix in $\mathcal M(m,2,m,2)$. If $A$ lies in this set, then each entry is $0$, $1$, or $2$.  By permuting rows and columns, every entry equal to $2$ may be placed on the diagonal, and the other entries in its row and column must vanish. Starting from an entry equal to $1$ and continuing alternately along rows and columns produces one of the blocks
\[
(2),\qquad
\begin{pmatrix}1&1\\1&1\end{pmatrix},\qquad
\begin{pmatrix}1&1&0\\1&0&1\\0&1&1\end{pmatrix},\qquad
\begin{pmatrix}
	1&1&0&0\\1&0&1&0\\0&1&0&1\\0&0&1&1
\end{pmatrix},\quad\ldots
\]
Each block has geometric realization a circle. Row and column permutations put $A$ into block diagonal form. Consequently, the spectrum of a non-degenerate Cartan subalgebra of $M_m(\tilde{\mathbb{I}}_2)$ is a disjoint union of $k$ circles for some
$k\in\{1,\ldots,m\}$.
\end{proof}

\begin{remark}\label{liangzhong}
For $m=p=2$, the five conjugacy classes of non-degenerate Cartan subalgebras found in Proposition~\ref{wuge} give precisely two homeomorphism types of spectra. More precisely,
\[
\operatorname{Spec}(B_e)
\cong
\operatorname{Spec}(B_{(12)})
\cong S^1\sqcup S^1,
\]
whereas
\[
\operatorname{Spec}(B_{(13)})
\cong
\operatorname{Spec}(B_{(14)})
\cong
\operatorname{Spec}(B_{(123)})
\cong S^1.
\]
Thus the homeomorphism type of the spectrum is not a complete invariant for conjugacy in $M_2(\tilde{\mathbb{I}}_2)$.
\end{remark}

\section{Prescribed finite numbers of Cartan conjugacy classes}
In view of the issue with the endpoint permutation groups discussed above, we revisit the finite enumeration question considered in \cite{BR}. We give an independent construction showing that, for every $n\in\mathbb{N}$, there exists a
subhomogeneous $\mathrm{C}^*$-algebra with exactly $n$ conjugacy classes of Cartan subalgebras.

\begin{definition}
Fix integers $1\leq a<b$, set $N=a+b$, and regard $F_{a,b}=M_a\oplus M_b$ as the usual block-diagonal subalgebra of $M_N$. The splitting interval algebra considered here is
\[
S_{a,b}
=
\left\{
f\in C([0,1],M_N): f(0),f(1)\in F_{a,b}
\right\}.
\]
\end{definition}
Like dimension drop algebras and the dimension drop interval algebras introduced above, it is also a one-dimensional NCCW complex. A Cartan subalgebra $B\subseteq S_{a,b}$ is called non-degenerate if $\dim B(0)=\dim B(1)=N$. The following lemma proves that non-degeneracy is automatic when the two block sizes are different.

\begin{lemma}\label{duandian}
Every Cartan subalgebra of $S_{a,b}$ is non-degenerate.
\end{lemma}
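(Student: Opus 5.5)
The plan is to reduce to a neighbourhood of one endpoint, say $0$; the endpoint $1$ is symmetric via $\Phi(t)=1-t$. There I would argue as in the proof of \cref{feituihua}, now taking advantage of the fact that $F_{a,b}$ sits in $M_N$ with multiplicity one rather than as $M_m\otimes 1_p$. Let $B\subseteq S_{a,b}$ be Cartan. Over any open interval $V\subseteq(0,1)$ the restriction is a Cartan subalgebra of $C_0(V,M_N)$. So by \cref{qici} every interior fibre $B(t)$ is a MASA of $M_N$, i.e. $\dim B(t)=N$. The fibre $B(0)$ is a finite-dimensional abelian subalgebra of $F_{a,b}$, hence $\dim B(0)\le N$. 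Non-degeneracy amounts to showing that every minimal projection $q$ of $B(0)$ has rank one in $\mathbb C^N$.

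Suppose that $q$ is a minimal projection of $B(0)$ with $r=\operatorname{rank}q>1$. As in the proof of \cref{danbianweiyi}, semiprojectivity of finite-dimensional abelian algebras lets me lift the minimal projections of $B(0)$ to orthogonal projections of $B$ over $[0,\varepsilon]$. In particular there is $f\in B_{[0,\varepsilon]}$ with $f(0)=q$ and $\operatorname{rank}f(t)=r$. After conjugating by a unitary path that is $1$ at $0$, I may assume $f$ is the constant $q$. Then $qBq$ is a Cartan subalgebra of the corner $qS_{a,b}q$ over $[0,\varepsilon]$. Its fibre at $0$ is $qF_{a,b}q\cong M_{r_1}\oplus M_{r_2}$, where $r_1=\operatorname{rank}(qP_a)$ and $r_2=\operatorname{rank}(qP_b)$, and $P_a,P_b$ are the central projections of $F_{a,b}$. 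Its fibre at $t>0$ is $M_r$. Moreover $(qBq)(0)=\mathbb Cq$. I then choose normalizers $x$ of $qBq$ as in \cref{danbianweiyi}. Since $x^*x,xx^*\in qBq$ with value a scalar multiple of $q$ at $0$, the normalizing partial isometries obtained after polar decomposition have values at $0$ that are unitaries of $qF_{a,b}q$. By regularity these values span $qF_{a,b}q$.

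For $t>0$ each such normalizer is monomial with respect to the $r$ minimal projections $e_1(t),\dots,e_r(t)$ of $(qBq)(t)$, and the permutation parts act transitively, because the fibre $M_r$ is spanned by the normalizers. Consider the weights $c_i=\lim_{t\to0}\operatorname{tr}(P_a e_i(t))$, passing to a subsequence if necessary. The unitary values at $0$ commute with the central projection $qP_a$ of $qF_{a,b}q$. Combined with transitivity, this forces all $c_i$ to equal $r_1/r$. Next I would compare the multiplicative structure: the limit of the diagonal algebras $(qBq)(t)$ is a MASA $D$ of $M_r$, with limiting expectation $E_0$ satisfying $E_0(qF_{a,b}q)=\mathbb Cq$. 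The argument of \cite[Theorem~4.5]{BR} then shows that $qF_{a,b}q$ must be of the form $M_s\otimes 1_d$ inside $M_r=M_s\otimes M_d$ with $d>1$; this is exactly the orthogonal-MASA situation of \cite[Examples~4.3--4.4]{BR}. Since $qF_{a,b}q=M_{r_1}\oplus M_{r_2}$ is embedded with multiplicity one in each summand, this is possible only when $s=1$, $d=r$ and $r_1=r_2=1$, i.e. $q$ splits as one rank-one piece in each block.

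To finish, I would rule out this last configuration using $a\ne b$. Every degenerate minimal projection of $B(0)$ then pairs a rank-one projection in $M_a$ with one in $M_b$. Minimal projections of $B(0)$ that are not paired, and lie entirely in one block, are rank one by the argument above. Counting shows that the normalizers at $0$ span the full fibre $F_{a,b}$, in particular off-diagonal units inside each block. This forces the pairing to be a bijection between a MASA basis of $M_a$ and one of $M_b$. That is impossible when $a<b$, because $P_b$ would have to be covered by paired projections only.

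The step I expect to be the main obstacle is the middle one: the careful transfer of the divisibility argument of \cite[Theorem~4.5]{BR} from $J_{m,p}$ to corners of $S_{a,b}$. There the endpoint algebra is a direct sum rather than a tensor product, so the weights $c_i$ and the limiting expectation must be controlled directly.
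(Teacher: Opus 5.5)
Your outline has the same architecture as the paper's proof: interior fibres are MASAs, there is a limiting MASA and expectation at $0$, a local rank bound for minimal projections of $B(0)$, and a global argument using $a\neq b$. However, the local rank bound, which is the heart of the lemma, is not actually established.

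The weight computation only gives $\|P_a\xi\|^2=r_1/r$ for limiting eigenvectors $\xi$. That is compatible with, say, $r_1=r_2=2$ (take $\xi=(e_i\pm f_i)/\sqrt2$), so it bounds nothing. It also relies on a transitivity claim for the normalizers that are unitary at $0$ which you do not prove: regularity spans $M_r$ using all normalizers, including those vanishing at $0$.

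The decisive step is then delegated to \cite[Theorem~4.5]{BR}, which concerns the tensor inclusion $M_m\otimes 1_p\subseteq M_m\otimes M_p$. Its conclusion, as you state it, cannot hold here. For $r_1,r_2\geq1$ the algebra $qF_{a,b}q\cong M_{r_1}\oplus M_{r_2}$ is not a factor, so it is never of the form $M_s\otimes 1_d$. Your terminal case $s=1$, $d=r$ would make $qF_{a,b}q=\mathbb{C}q$ one-dimensional, contradicting $r_1=r_2=1$, where it is $\mathbb{C}\oplus\mathbb{C}$. What genuinely survives locally is $qF_{a,b}q\cong\mathbb{C}^2$ as a MASA of $M_2$ orthogonal to the limiting MASA, which is not a tensor picture. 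So the inequality $r_1,r_2\leq1$ is asserted rather than derived.

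The missing idea is short, and you already have its ingredients.
\begin{enumerate}
\item Take $t_j\to0$ with $B(t_j)=u_jD_Nu_j^*$ and $u_j\to u$, and put $D=uD_Nu^*$.
\item Applying continuity of $E(\widehat x)$ to constant sections $\widehat x$, $x\in F_{a,b}$, gives $E_0=E_D|_{F_{a,b}}$. Hence $B(0)\subseteq D$ and $E_0$ preserves $\operatorname{Tr}$.
\item For a minimal projection $q$ of $B(0)$ of rank $r=r_1+r_2$, bimodularity and minimality give $E_0(x)=\frac{\operatorname{Tr}(x)}{r}\,q$ on $qF_{a,b}q$.
\item Compressing by a rank-one $\xi\xi^*\leq q$ in $D$ yields $\xi^*x\xi=\operatorname{Tr}(x)/r$. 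Thus a vector state coincides with the normalized trace on $qF_{a,b}q$.
\item Since $F_{a,b}$ sits in $M_N$ with multiplicity one, test with $x=\eta\eta^*$ for a unit vector $\eta\in\operatorname{ran}(qP_a)$ orthogonal to $P_a\xi$. This gives $0=1/r$ unless $r_1\leq1$; likewise $r_2\leq1$.
\end{enumerate}
No semiprojectivity, corner reduction, transitivity or weights are needed.

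Your final step is right in spirit, but ``counting shows'' should be replaced by the actual mechanism. For minimal projections $q,q'$ of $B(0)$ and $n\in N_{S_{a,b}}(B)$, the element $T=qn(0)q'$ normalizes $B(0)$. If $T\neq0$, then $T^*T$ and $TT^*$ are positive multiples of $q'$ and $q$, which forces $q$ and $q'$ to have equal block ranks. A $(1,1)$-projection $q$ and a $(1,0)$- or $(0,1)$-projection $q'$ satisfy $qF_{a,b}q'\neq0$. Regularity therefore forbids mixing the two types. So a single $(1,1)$-projection would force every minimal projection to be of type $(1,1)$, and hence $a=b$.
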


\begin{proof}
Let $B\subseteq S_{a,b}$ be a Cartan subalgebra. We prove that $B(0)$ is a MASA of $M_a\oplus M_b$. Put $A=S_{a,b}$, $C=B(0)$. Fix $t\in(0,1)$ and choose a closed interval $I\subseteq(0,1)$ whose interior contains $t$. Then $B|_I$ is a Cartan subalgebra of $C(I,M_N)$. By \cref{qici}, $B|_I$ is conjugate to $C(I,D_N)$. In particular, $B(t)$ is a MASA of $M_N$.

Choose a sequence $t_j\rightarrow0$. For each $j$, choose $u_j\in U(N)$ such that $B(t_j)=u_jD_Nu_j^*$. After passing to a subsequence, compactness of $U(N)$ allows us to assume that $u_j\to u\in U(N)$. Set $D=uD_Nu^*$, then $D$ is a MASA of $M_N$. For $x\in F_{a,b}$, let $\widehat{x}\in A$ denote the constant section with value $x$. Continuity of $E(\widehat{x})$ gives
\[
\begin{aligned}
	E_0(x)
	=E(\widehat{x})(0)
	=\lim_{j\to\infty}E(\widehat{x})(t_j) 
	=\lim_{j\to\infty}E_{t_j}(x)
	=E_D(x),
\end{aligned}
\]  
where $E_D\colon M_N\to D$ denotes the canonical expectation and $E_0\colon F_{a,b}\to C$ is the conditional expectation induced at the endpoint. Hence $E_0=E_D|_{F_{a,b}}$. It follows that $E_0$ is trace-preserving and that $C\subseteq D$.

Let $p=(p_a,p_b)$ be a minimal projection of $C$, and set $r=\operatorname{rank}(p_a)$, $s=\operatorname{rank}(p_b)$. For $x\in pF_{a,b}p$, bimodularity of $E_0$ and minimality of $p$ give $E_0(x)\in pCp=\mathbb Cp$. Since $E_0$ preserves the ordinary matrix trace, we obtain $	E_0(x)=\frac{\operatorname{Tr}(x)}{r+s}\,p$. Since $p\in C\subseteq D$, choose a rank-one projection $q\in D$ with $q\leq p$, and write $q=\xi\xi^*$ for a unit vector $\xi$. Then 
\[ \begin{aligned}
		(\xi^*x\xi)q
		=qxq
		=E_D(qxq)
		=qE_D(x)q  
		=qE_0(x)q
		=\frac{\operatorname{Tr}(x)}{r+s}\,q.
\end{aligned}\]
As $q\neq0$, it follows that \[\xi^*x\xi=\frac{\operatorname{Tr}(x)}{r+s}.\tag{1}\]

Write $\xi=(\xi_a,\xi_b)$ according to $\mathbb C^N=\mathbb C^a\oplus\mathbb C^b$. If $r\geq2$, choose a unit vector $\eta\in p_a\mathbb C^a$ orthogonal to $\xi_a$, and put $x=(\eta\eta^*,0)\in pF_{a,b}p$. Then $\xi^*x\xi=0$, whereas
\[\frac{\operatorname{Tr}(x)}{r+s}=\frac{1}{r+s}>0,\]
contradicting (1). Thus $r\leq1$. The same argument in the second summand gives $s\leq1$. Consequently, every minimal projection of $C$ has one of the rank profiles $(1,0)$, $(0,1)$, $(1,1)$.

We next exclude the profile $(1,1)$. Let $p,p'$ be minimal projections of $C$, let $n\in N_A(B)$, and set $T=pn(0)p'$. Since $n(0)\in N_{F_{a,b}}(C)$, the element $T$ also normalizes $C$. If $T\neq0$, then minimality of $p$ and $p'$ gives $T^*T=\lambda p'$, $TT^*=\mu p$ for some $\lambda,\mu>0$. Write
\[T=(T_a,T_b),\qquad p=(p_a,p_b),\qquad p'=(p'_a,p'_b). \] Then $T_a^*T_a=\lambda p'_a$, $T_aT_a^*=\mu p_a$. Since $T_a^*T_a$ and $T_aT_a^*$ have the same rank, we obtain $\operatorname{rank}(p'_a)=\operatorname{rank}(p_a)$. The same argument in the second summand gives $\operatorname{rank}(p'_b)=\operatorname{rank}(p_b)$. Thus $T\neq0$ implies that $p$ and $p'$ have the same rank profile.
	
Suppose that $p$ has profile $(1,1)$, while $p'$ has profile $(1,0)$ or $(0,1)$. The preceding paragraph then gives $pn(0)p'=0$ for every $n\in N_A(B)$. On the other hand, $pF_{a,b}p'\neq0$, since $p$ and $p'$ have a common nonzero matrix summand. Regularity of $B$ gives $F_{a,b} =\overline{\operatorname{span}} \{n(0):n\in N_A(B)\}$. Multiplying this equality on the left by $p$ and on the right by $p'$ yields $pF_{a,b}p'=0$, a contradiction. It follows that if one minimal projection of $C$ had profile $(1,1)$, then every minimal projection of $C$ would have that profile. If there were $m$ such minimal projections, summing them to $1_{F_{a,b}}$ would give $a=m=b$, contrary to $a<b$. Hence the profile $(1,1)$ cannot occur.
	
Every minimal projection of $C$ therefore has profile $(1,0)$ or $(0,1)$, and hence has total rank one. Since the minimal projections of $C$ sum to $1_{F_{a,b}}$, there are exactly $a+b=N$ of them. Thus $C=B(0)$ is a MASA of $F_{a,b}$. Repeating the same argument at $1$ shows that $B(1)$ is also a MASA of $F_{a,b}$. Therefore $B$ is non-degenerate.
\end{proof}

\begin{lemma}\label{lianxu}
Let $B\subseteq S_{a,b}$ be Cartan. There exist continuous rank-one projection sections $p_1,\ldots,p_N\in B$, pairwise orthogonal and summing to $1$, such that
\begin{equation*}
B= \left\{
\sum_{j=1}^{N}h_jp_j:h_j\in C([0,1])
\right\}.
\label{lianxushi}
\end{equation*}
In particular, $B\cong C([0,1])^{\oplus N}$,	and $B$ is a $C^*$-diagonal. Consequently, all Cartan subalgebras of $S_{a,b}$ are mutually isomorphic.
\end{lemma}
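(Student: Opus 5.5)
The plan is to build the projection sections locally, glue them, and then read off the description of $B$, the isomorphism type, and the diagonal property.

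\textbf{Endpoints.} By Lemma~\ref{duandian}, $B(0)$ and $B(1)$ are MASAs of $F_{a,b}$. Each is therefore spanned by $N$ mutually orthogonal rank-one projections; each of these lies in $M_a$ or in $M_b$.

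\textbf{Interior.} For a closed interval $I\subseteq(0,1)$, Theorem~\ref{qici} gives a unitary $w\in C(I,U(N))$ with $w B|_I w^*=C(I,D_N)$. So over $I$ the algebra $B$ is spanned by the $N$ continuous rank-one sections $w^*e_{jj}w$.

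\textbf{Near an endpoint.} Near $0$ I would argue as in the first step of the proof of Theorem~\ref{danbianweiyi}. Semiprojectivity of $\mathbb C^N$, applied to the $N$ minimal projections of $B(0)$, lifts them to orthogonal projections $f_1,\dots,f_N\in B|_{[0,\varepsilon]}$. These satisfy $\sum f_j=1$ and have norm-continuous rank, hence rank one. Then $B|_{[0,\varepsilon]}\supseteq\operatorname{span}\{hf_j\}$. Equality follows from maximal abelianness: an element of $B$ commutes with each $f_j$, so its value at $t$ is a diagonal element for the rank-one decomposition $\{f_j(t)\}$.

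\textbf{Gluing.} I cover $[0,1]$ by $[0,\varepsilon]$, $[1-\varepsilon,1]$, and finitely many interior intervals. On each overlap, two families of rank-one projections span the same MASA of $M_N$ fibrewise. At each point they therefore agree up to a permutation, and this permutation is locally constant by continuity. Relabelling successively along the interval, I would obtain global continuous rank-one orthogonal sections $p_1,\dots,p_N\in B$ (they lie in $B$ by the local description) with $\sum p_j=1$. The reverse inclusion $B\subseteq\{\sum h_jp_j\}$ holds because $b\in B$ commutes with every $p_j$, so $b=\sum_j p_jbp_j$. Each $p_jbp_j=h_jp_j$ with $h_j(t)=\operatorname{Tr}(p_j(t)b(t))$ continuous. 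Conversely, $\sum h_jp_j$ lies in $S_{a,b}$, since $p_j(0),p_j(1)\in F_{a,b}$. The map $(h_j)\mapsto\sum h_jp_j$ is then an isomorphism $C([0,1])^{\oplus N}\cong B$.

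\textbf{$\mathrm{C}^*$-diagonal.} Every pure state of $B$ is $\sum h_jp_j\mapsto h_k(t)$. Any pure state of $S_{a,b}$ extending it lives on an irreducible representation at $t$, which is $M_N$ for interior $t$ and a summand $M_a$ or $M_b$ at the endpoints. It must be a vector state at a unit vector $\xi$ with $\langle p_k(t)\xi,\xi\rangle=1$. Since $p_k(t)$ has rank one, $\xi$ is determined up to phase, so the extension is unique. Together with the Cartan property this gives a $\mathrm{C}^*$-diagonal. Mutual isomorphism of all Cartan subalgebras is then immediate.

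\textbf{Main obstacle.} The hardest step is the endpoint analysis: lifting the endpoint MASA to continuous rank-one sections and checking that $B$ is not larger than their span near $0$. I would handle it via maximal abelianness, as indicated above.
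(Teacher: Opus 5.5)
Your proposal is correct and follows essentially the same route as the paper. Both arguments build local rank-one projection sections of $B$, glue them via locally constant permutations over the contractible interval, read off the coefficients $h_j=\operatorname{Tr}(bp_j)$, and check unique extension fibrewise; the paper gets its local sections at every point, endpoints included, from spectral projections of a lifted self-adjoint element with distinct eigenvalues, which is what your semiprojectivity step amounts to. The one step you compress, that the glued $p_j$ lie in $B$, needs a short justification: the paper uses a partition of unity and $C([0,1])\subseteq B$, or you can note that $p_j\in S_{a,b}$ commutes with $B$ fibrewise and hence lies in $B$ by maximal abelianness.
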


\begin{proof}
By  Lemma~\ref{duandian} and \cref{qici}, $B(t)$ is an $N$-dimensional MASA of $M_N$ for every $t\in[0,1]$. Fix $t_0\in[0,1]$, and let $e_1,\ldots,e_N$ be the minimal projections of $B(t_0)$. Choose pairwise distinct real numbers $\lambda_1,\ldots,\lambda_N$ and set
\[c_0=\sum_{j=1}^N \lambda_j e_j\in B(t_0)_{\mathrm{sa}}.\]
There exists $c\in B_{\mathrm{sa}}$ such that $c(t_0)=c_0$. By continuity of $c$ and continuous functional calculus, there is a connected neighbourhood $U$ of $t_0$ and continuous projection sections $q_1,\ldots,q_N\in B|_U$ such that
\[q_j(t_0)=e_j,\qquad q_jq_k=0\ (j\neq k),\qquad \sum_{j=1}^N q_j=1. \]
Indeed, one chooses pairwise disjoint intervals around the numbers $\lambda_j$ and takes the corresponding spectral projections of $c(t)$. The rank of a norm-continuous projection is locally constant; after shrinking $U$ if necessary, each $q_j(t)$ therefore has rank one. Hence $q_1(t),\ldots,q_N(t)$ are precisely the minimal projections of $B(t)$ and span $B(t)$ for every $t\in U$.
	
If two such local families are defined on overlapping neighbourhoods, then at each point of the overlap they are two enumerations of the minimal projections of the same MASA. They therefore differ by a permutation, and this permutation is locally constant. Consequently, the minimal projections of the fibres $B(t)$ form an $N$-sheeted covering of $[0,1]$. Since $[0,1]$ is contractible, this covering is trivial. We may thus label its sheets by continuous rank-one projection sections $p_1,\ldots,p_N$, which are pairwise orthogonal and sum to $1$.
	
Choose a finite open cover $(U_\alpha)_\alpha$ as above. After relabelling the local spectral projections, for every $\alpha$ and $j$ there is an element $b_{\alpha,j}\in B$ such that $b_{\alpha,j}(t)=p_j(t)$, $t\in U_\alpha$. We may choose continuous functions $\rho_\alpha\colon [0,1]\to[0,1]$ such that
\[
\operatorname{supp}(\rho_\alpha)\subseteq U_\alpha
\quad\text{for every }\alpha,
\qquad
\sum_\alpha \rho_\alpha(t)=1
\quad\text{for every }t\in[0,1].
\] 
The sum $\sum_\alpha \rho_\alpha b_{\alpha,j}$ belongs to $B$.
Pointwise, it equals $p_j$, and hence $p_j\in B$ for every $j$.
	
Let $b\in B$. Since $p_1(t),\ldots,p_N(t)$ span $B(t)$, there are unique scalars $h_j(t)$ such that $b(t)=\sum_{j=1}^N h_j(t)p_j(t)$. Because $p_j(t)$ has rank one, these coefficients are given by $h_j(t)=\operatorname{Tr}\bigl(b(t)p_j(t)\bigr)$, where $\operatorname{Tr}$ denotes the ordinary matrix trace. Thus each $h_j$ is continuous. Conversely, if $h_j\in C([0,1])$, then $h_jp_j\in B$ because $C([0,1])1_N\subseteq B$. It follows that
\[ B = \left\{ \sum_{j=1}^N h_jp_j : h_j\in C([0,1]) \right\}, \]
and the map
\[ C([0,1])^{\oplus N}\longrightarrow B, \qquad (h_1,\ldots,h_N)\longmapsto \sum_{j=1}^N h_jp_j, \]
is a $*$-isomorphism.

Finally, every pure state of $B$ is of the form $\psi\circ\operatorname{ev}_t$, where $t\in[0,1]$ and $\psi$ is a pure state of $B(t)$. Any state extension restricts to evaluation at $t$ on $C([0,1])$, and hence factors through the fibre $(S_{a,b})_t$. Since $B(t)\subseteq (S_{a,b})_t$ is a finite-dimensional diagonal for every $t\in[0,1]$, it has the unique extension property. Thus $B$ has the unique extension property and, being Cartan, is a $C^*$-diagonal.
\end{proof}

The preceding lemma reduces the remaining analysis to the way in which the $N$ continuous diagonal strands meet the two matrix summands at the endpoints. We now introduce the corresponding endpoint invariant and use it to compute the Cartan conjugacy classes explicitly. Put $z_1=1_a\oplus0_b$, $z_2=0_a\oplus1_b$ in $F_{a,b}$. For $B$ and its strands as in Lemma~\ref{lianxu}, define the endpoint matching matrix $R(B)=(r_{ij})_{i,j=1}^{2}$ by
\begin{equation*}
r_{ij} =\#\{\ell:p_\ell(0)\leq z_i,\ p_\ell(1)\leq z_j\}. \label{pipei}
\end{equation*}
Each endpoint projection lies in exactly one of the two matrix summands. The row and column sums of $R(B)$ are both $(a,b)$. Thus
\begin{equation*}
R(B)=R_k:=
	\begin{pmatrix}
		a-k & k\\
		k & b-k
	\end{pmatrix}
	\qquad
	\text{for a unique }k\in\{0,\ldots,a\}.
	\label{pipeishi}
\end{equation*}
The matrix records the full endpoint block of each strand. Its entries do
not identify distinct diagonal characters: the spectrum remains a disjoint union of $N$ intervals.

\begin{proposition} \label{wanbei}
For Cartan subalgebras $B,C\subseteq S_{a,b}$, the following conditions are equivalent:
\begin{enumerate}
\item an automorphism of $S_{a,b}$ carries $B$ onto $C$;
\item a unitary in $S_{a,b}$ conjugates $B$ onto $C$;
\item $R(B)=R(C)$.
\end{enumerate}
Every matrix $R_k$, $0\leq k\leq a$, occurs. Hence $S_{a,b}$ has exactly $a+1$ Cartan conjugacy classes, all non-degenerate.
\end{proposition}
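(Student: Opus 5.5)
We prove the cycle (2)$\Rightarrow$(1)$\Rightarrow$(3)$\Rightarrow$(2), and then show that every $R_k$ occurs. The implication (2)$\Rightarrow$(1) is trivial.

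For (1)$\Rightarrow$(3), let $\alpha$ be an automorphism of $S_{a,b}$ with $\alpha(B)=C$.
\begin{itemize}
\item The centre of $S_{a,b}$ is $C([0,1])$, so $\alpha$ induces a homeomorphism $\varphi$ of $[0,1]$ and fibre isomorphisms $\alpha_t\colon (S_{a,b})_{\varphi(t)}\to(S_{a,b})_t$. At an endpoint these are automorphisms or isomorphisms of $M_a\oplus M_b$. Since $a<b$, such a map cannot swap the summands, so it preserves $z_1$ and $z_2$.
\item By \cref{lianxu}, the strands $p_\ell$ of $B$ are carried to rank-one continuous projection sections of $C$. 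These sections are $t\mapsto \alpha_t(p_\ell(\varphi(t)))$, which are minimal projections of the fibres of $C$. Hence they are exactly the strands $q_\ell$ of $C$, up to relabelling.
\item If $\varphi$ preserves orientation, the endpoint block of each strand is kept at $0$ and at $1$, so $R(C)=R(B)$.
\item If $\varphi$ reverses orientation, $R(C)=R(B)^{T}$. But every $R_k$ is symmetric, so again $R(C)=R(B)$.
\end{itemize}

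For (3)$\Rightarrow$(2), suppose $R(B)=R(C)=R_k$. Relabel the strands of both algebras so that the $\ell$-th strand of $B$ and the $\ell$-th strand of $C$ start in the same summand at $0$ and end in the same summand at $1$; equal matching counts make this possible. Write $p_\ell$ for the strands of $B$ and $q_\ell$ for those of $C$.
\begin{itemize}
\item Each $p_\ell$ and $q_\ell$ is a rank-one projection-valued path on the contractible interval. So each is the range projection of a continuous unit vector field, $p_\ell=\xi_\ell\xi_\ell^*$ and $q_\ell=\eta_\ell\eta_\ell^*$, with the vectors pairwise orthogonal.
\item Define $u(t)=\sum_\ell \eta_\ell(t)\xi_\ell(t)^*$. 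This is a continuous unitary path with $u p_\ell u^*=q_\ell$, so $uBu^*=C$ by \cref{lianxu}.
\item It remains to check $u\in S_{a,b}$. At $t=0$, the vectors $\xi_\ell(0)$ and $\eta_\ell(0)$ lie in the same summand $\mathbb C^a$ or $\mathbb C^b$. Hence $u(0)$ maps $\mathbb C^a$ to $\mathbb C^a$ and $\mathbb C^b$ to $\mathbb C^b$, so $u(0)\in F_{a,b}$. The same holds at $t=1$.
\end{itemize}
The main care is in this step: the strand labelling must be chosen so that the endpoint summands match simultaneously at both ends. The equality of the $2\times2$ matrices gives exactly the counts needed for a bijection of strand sets in each class $(i,j)$.

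For existence, fix $k\in\{0,\ldots,a\}$.
\begin{itemize}
\item Choose a permutation $\sigma$ of $\underline N$ that sends $k$ of the indices in the $a$-block into the $b$-block, sends $k$ indices of the $b$-block into the $a$-block, and fixes the rest.
\item Take a unitary path $w$ from $1$ at $t=0$ to the permutation unitary $\sigma$ at $t=1$.
\item Set $B=\{\,w\,d\,w^*: d\in C([0,1],D_N)\,\}$. Its strands are $w e_{\ell\ell}w^*$, which equal $e_{\ell\ell}$ at $0$ and $e_{\sigma(\ell)\sigma(\ell)}$ at $1$.
\item $B$ lies in $S_{a,b}$ because its endpoint values are diagonal. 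It is Cartan: it is conjugate by $\operatorname{Ad}w$ to the standard Cartan subalgebra of $C([0,1],M_N)$ (\cref{qici}), cut down to $S_{a,b}$. Regularity near the endpoints follows because diagonal matrix units inside $F_{a,b}$ and interior monomial normalizers generate.
\item By construction $R(B)=R_k$.
\end{itemize}

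Since $k$ ranges over $a+1$ values, this gives exactly $a+1$ conjugacy classes. All of them are non-degenerate by \cref{duandian}.
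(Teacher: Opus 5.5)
Your proposal is correct and follows the paper's proof essentially step for step: (1)$\Rightarrow$(3) uses the induced homeomorphism of $[0,1]$, summand preservation at the endpoints since $a\neq b$, and the symmetry of $R_k$; (3)$\Rightarrow$(2) uses the unitary $\sum_\ell\eta_\ell\xi_\ell^*$ built from matched strands; and realization conjugates $C([0,1],D_N)$ by a path ending at a permutation unitary that exchanges $k$ indices between the two blocks. The differences are cosmetic: your twist sits at $t=1$ rather than $t=0$, and you sketch the regularity check that the paper calls straightforward (matrix units of $F_{a,b}$ conjugated by $w$ near the endpoints, plus interior normalizers).
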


\begin{proof}
An automorphism of $S_{a,b}$ acts on its centre by a homeomorphism of $[0,1]$. It therefore either preserves the two endpoints individually or interchanges them. The induced endpoint isomorphisms preserve the two matrix summands because $a\neq b$. It follows that the matching matrix is unchanged in the first case and transposed in the second. Since $R_k$ is symmetric, (1) implies (3).
	
Assume (3). Label the strands $p_1,\ldots,p_N$ of $B$ and $q_1,\ldots,q_N$ of $C$ so that corresponding strands have the same pair of endpoint blocks. Choose  continuous maps $\xi_j,\eta_j\colon[0,1]\longrightarrow\mathbb{C}^N$ such that, for every $t\in[0,1]$, $\xi_j(t)$ and $\eta_j(t)$ are unit vectors spanning $\operatorname{ran}(p_j(t))$ and $\operatorname{ran}(q_j(t))$, respectively. The formula $v(t)=\sum_{j=1}^{N}\eta_j(t)\xi_j(t)^*$ defines a continuous unitary-valued map satisfying $v(t)p_j(t)v(t)^*=q_j(t)$ for every $j$ and every $t\in[0,1]$. At either endpoint it maps each of the two coordinate block subspaces onto itself. Hence $v(0),v(1)\in F_{a,b}$, so $v\in U(S_{a,b})$ and $\operatorname{Ad}(v)(B)=C$. Thus (3) implies (2), while (2) trivially implies (1).

For realization, label the first block by $1,\ldots,a$ and the second by $a+1,\ldots,a+b$. For $0\leq k\leq a$, put 
\[\sigma_k=(1,a+1)(2,a+2)\cdots(k,a+k)\in\operatorname{Sym}(N), \qquad \sigma_0=\operatorname{id}. \]
Choose a continuous path $u_k:[0,1]\to U(N)$ which is equal to 	$P_{\sigma_k}$ near $0$ and to $1$ near $1$, and set $B_k=u_kC([0,1],D_N)u_k^*$. It is straightforward to verify that this is a Cartan subalgebra of $S_{a,b}$, and its strands have matching matrix $R_k$.
\end{proof}

\begin{theorem} \label{zhiding}
For every $n\in\mathbb{N}$, there exists a subhomogeneous
$C^*$-algebra with exactly $n$ conjugacy classes of Cartan
subalgebras. Moreover, every Cartan subalgebra of this algebra is a
$C^*$-diagonal.
\end{theorem}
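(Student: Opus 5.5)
The plan is to take the splitting interval algebra $S_{a,b}$ with $a=n-1$ and $b=a+1=n$, so that $1\leq a<b$ when $n\geq2$. For $n=1$ I would simply use $S_{1,2}$? No: that algebra has $a+1=2$ classes. Instead, for $n=1$ I would use Proposition~\ref{weiyi}: $\tilde{\mathbb I}_p$ (for instance $p=1$, i.e.\ $C([0,1])$, or any $p$) has a unique Cartan subalgebra up to conjugacy. For the diagonal statement I would cite Remark~\ref{duijiao} with $m=1$, $\gcd(1,p)=1$. Even simpler, take $C([0,1])$ itself, which is commutative; its unique Cartan subalgebra is itself and is trivially a $\mathrm{C}^*$-diagonal, since pure states of an abelian algebra extend uniquely to itself.

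For $n\geq2$, set $a=n-1\geq1$ and $b=n$. The algebra $S_{a,b}$ is a $\mathrm{C}^*$-subalgebra of $C([0,1],M_N)$, hence subhomogeneous. Proposition~\ref{wanbei} gives exactly $a+1=n$ conjugacy classes of Cartan subalgebras. It also gives that all of them are non-degenerate, which Lemma~\ref{duandian} guarantees independently. Lemma~\ref{lianxu} shows that every Cartan subalgebra of $S_{a,b}$ is a $\mathrm{C}^*$-diagonal, and that all of them are isomorphic to $C([0,1])^{\oplus N}$. This covers the additional claim made in the introduction.

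Essentially no new work is needed, because the substance lies in Proposition~\ref{wanbei}. The only point needing care is its realization step. There one must check that $B_k=u_kC([0,1],D_N)u_k^*$ actually lies in $S_{a,b}$ and is Cartan there. The endpoint fibres are $P_{\sigma_k}D_NP_{\sigma_k}^*=D_N\subseteq F_{a,b}$ and $D_N$. So $B_k\subseteq S_{a,b}$, and it is maximal abelian fibrewise. The diagonal expectation conjugated by $u_k$ maps $S_{a,b}$ into $B_k$, because at the endpoints $E_{D_N}$ preserves $F_{a,b}$. Regularity holds because near each endpoint $B_k$ is, up to a constant permutation, the standard diagonal of $C([0,\varepsilon],M_N)$ restricted to $F_{a,b}$ at the endpoint. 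That inclusion is generated by matrix units $e_{ij}$ times functions vanishing at the endpoint, together with block matrix units.

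I expect this regularity check near the endpoints to be the only mildly delicate step. It can be handled locally as in the proof of Proposition~\ref{biaozhun}.
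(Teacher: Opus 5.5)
Your proposal is correct and follows the paper's proof: it uses $\widetilde{\mathbb I}_p$ via Proposition~\ref{weiyi} for $n=1$, and $S_{n-1,n}$ via Proposition~\ref{wanbei} for $n\geq2$, with Lemma~\ref{lianxu} supplying the $\mathrm{C}^*$-diagonal claim. Your preferred choice $C([0,1])$ for $n=1$ is just the case $p=1$ of the paper's example, and it makes both uniqueness and the $\mathrm{C}^*$-diagonal property trivial, so you do not need Theorem~\ref{feituihua} or the sketched Remark~\ref{duijiao}.
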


\begin{proof}
For $n=1$, the dimension drop algebra $\widetilde{\mathbb I}_p$ has a unique Cartan subalgebra up to conjugacy; see Proposition~\ref{weiyi}. For $n>1$, the splitting interval algebra $S_{n-1,n}$ has exactly $n$ Cartan subalgebras up to conjugacy by Proposition~\ref{wanbei}.
\end{proof}
\begin{remark}
The endpoint permutation group in this construction is
\[ K=\operatorname{Sym}(a)\times\operatorname{Sym}(b)
\leq\operatorname{Sym}(a+b),\]
acting separately on the two coordinate blocks. It consists exactly of the permutations normalizing the full algebra $M_a\oplus M_b$, since the two summands have different sizes. The matrices $R_k$ classify $K\backslash\operatorname{Sym}(a+b)/K$, and inversion fixes each double coset. Thus the finite count uses the full endpoint algebra.
\end{remark}

\section{Existence and nonuniqueness of AD Cartan subalgebras} \label{guina}

\subsection{Existence of AD Cartan subalgebras}\

We write $A=\varinjlim(A_n,\varphi_n)$ for an inductive limit of $\mathrm{C}^*$-algebras with connecting maps $\varphi_n\colon A_n\to A_{n+1}$.

\begin{definition}	
An AD algebra is an inductive limit $A=\varinjlim(A_n,\varphi_n)$ in which each $A_n$ is a finite direct sum of dimension-drop interval algebras and $M_k(C(X))$, where $X$ is one of the finite connected CW complexes $\{\mathrm{pt}\}$, $\mathbb{T}$, and $[0,1]$.  If $B_n\subseteq A_n$ are Cartan subalgebras, $\varphi_n(B_n)\subseteq B_{n+1}$, and $B=\varinjlim(B_n,\varphi_n|_{B_n})$ is Cartan in $A$, then $B$ is called
an AD Cartan subalgebra of $A$.
\end{definition}

We use Li's criterion for passing Cartan subalgebras to inductive limits.

\begin{lemma}[{\cite[Theorem~1.10]{L2}}]\label{lijixian}
Let $(A_n,B_n)$ be Cartan pairs, $N_n=N_{A_n}(B_n)$, and let $P_n:A_n\to B_n$ be faithful conditional expectations.  Suppose $\varphi_n:A_n\to A_{n+1}$ are injective $*$-homomorphisms satisfying
\[ \varphi_n(B_n)\subseteq B_{n+1},\qquad
 \varphi_n(N_n)\subseteq N_{n+1},\qquad P_{n+1}\varphi_n=\varphi_nP_n. \]
Then $\varinjlim B_n$ is a Cartan subalgebra of $\varinjlim A_n$. If all $B_n$ are $\mathrm{C}^*$-diagonals, then $\varinjlim B_n$ is a $\mathrm{C}^*$-diagonal of $\varinjlim A_n$.
\end{lemma}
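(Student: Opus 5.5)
This is Li's theorem \cite[Theorem~1.10]{L2}, so the plan is to reprove it directly. Put $A=\varinjlim A_n$ and $B=\varinjlim B_n$, and identify each $A_n$ with its image in $A$; this is legitimate because the $\varphi_n$ are injective. I will check the four Cartan axioms for $B\subseteq A$ in order, and then the unique extension property.

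\emph{Expectation, approximate unit, regularity.} The compatibility $P_{n+1}\varphi_n=\varphi_nP_n$ says the $P_n$ form a coherent family of contractions on $\bigcup_nA_n$. They therefore extend by continuity to a conditional expectation $P\colon A\to B$, and complete positivity and bimodularity pass to the limit.
\begin{itemize}
\item[\textbf{(i)}] Regularity is immediate: $\bigcup_n N_n\subseteq N_A(B)$, since $\varphi_n(N_n)\subseteq N_{n+1}$ gives, for $x\in N_n$, $x^*B_kx\subseteq B_k$ at every later stage $k$. Moreover $\bigcup_nN_n$ generates a dense subalgebra of $A$.
\item[\textbf{(ii)}] For the approximate unit, an approximate unit of $B_n$ for $A_n$, taken over all $n$, gives one of $B$ for $A$ by density. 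In the unital case of interest this is trivial.
\end{itemize}

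\emph{Faithfulness.} This is the step I expect to be the main obstacle, because a norm limit of faithful maps need not be faithful. I would use the following known characterisation of Cartan pairs. For a regular $B$ with conditional expectation $P$, the pair is Cartan precisely when $P$ is faithful and $B$ is maximal abelian. The faithfulness of $P$ is in turn equivalent to the condition that the left kernel $\{a:P(a^*a)=0\}$ is zero.

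To control this kernel, I would pass to the twisted groupoid models $(G_n,\Sigma_n)$. The maps $\varphi_n$ preserve normalizers and commute with the expectations, so they induce morphisms of these models. One then wants $A$ realised as $C^*_r$ of the limit twisted groupoid (a projective limit of unit spaces), with $P$ equal to the restriction map to the unit space; that map is faithful on the reduced groupoid algebra. If the groupoid route is too heavy, the fallback is a direct argument. For $a\in A$ with $P(a^*a)=0$, approximate $a$ by $a_n\in\operatorname{span}N_n$. Write $a_n=\sum_k x_k$ with normalizers $x_k$, and use that each $x_k$ decomposes via the fibres of $P_n$ into a ``diagonal'' part and a part with disjoint support. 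From this, deduce estimates such as $\|a_n\|$ bounded by a function of $\|P(a_n^*a_n)\|$ uniformly in $n$. This uniform estimate is the delicate point, and I would expect it to require the topological principality, or more precisely the effectiveness, of each $G_n$ together with compatibility of the maps.

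\emph{Maximal abelian; diagonals.} Once $P$ is known to be faithful, maximality follows. If $a$ commutes with $B$, then for every normalizer $x\in N_n$ one has $P(x^*(a-P(a)))=0$ by bimodularity and effectiveness. Density of $\operatorname{span}\bigcup_nN_n$ then gives $P\bigl((a-P(a))^*(a-P(a))\bigr)=0$, so $a=P(a)\in B$. For the diagonal statement I would use Kumjian's criterion: a Cartan pair is a $\mathrm{C}^*$-diagonal exactly when $\ker P$ is spanned by free normalizers, meaning normalizers $n$ with $n^2=0$. Free normalizers in $N_n$ stay free under $\varphi_n$, and $\ker P$ is the closure of the union of the $\ker P_n$. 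Hence $\ker P$ is spanned by free normalizers, and $B$ is a $\mathrm{C}^*$-diagonal.
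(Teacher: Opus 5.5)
\textbf{The faithfulness step is not proved.} The paper gives no proof of this lemma; it only quotes \cite[Theorem~1.10]{L2}. Your routine steps are fine:
\begin{itemize}
\item the limit expectation;
\item $\bigcup_nN_n\subseteq N_A(B)$, hence regularity;
\item the approximate unit;
\item the diagonal case, via Kumjian's free-normalizer criterion and $\ker P=\overline{\bigcup_n\ker P_n}$.
\end{itemize}
The gap is faithfulness of $P$. You correctly flag it as the crux, but you never establish it.

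Your first route assumes that $A$ is the reduced algebra of a limit twisted groupoid, with $P$ the restriction map. Identifying $A$ that way requires building the groupoid and compatible embeddings $A_n\hookrightarrow C^*_r(G,\Sigma)$. That is a substantial construction in its own right (compare the groupoid description from \cite{L2} used later in the paper), and you do not carry it out.

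Your fallback cannot work. There is no bound of $\|a\|$ by a function of $\|P(a^*a)\|$ that vanishes at $0$ and holds uniformly in $n$, even when every stage is a $\mathrm{C}^*$-diagonal. Take $\varinjlim M_{2^n}$ with the diagonals, which satisfies all the hypotheses. Let $\eta_n$ be the vector with every coordinate equal to $2^{-n/2}$, and put $a_n=e_1\eta_n^*\in M_{2^n}$. Then $\|a_n\|=1$, but $P(a_n^*a_n)=2^{-n}1$.

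\textbf{The missing idea.} Show that the left kernel $L=\{a\in A:P(a^*a)=0\}$ is a two-sided ideal, and then use that ideals of an inductive limit are detected on the building blocks.
\begin{itemize}
\item In a Cartan pair, $P_k(x^*zx)=x^*P_k(z)x$ for $x\in N_k$ and $z\in A_k$. This is visible in the twisted groupoid picture, since $x$ lives on a bisection.
\item Using $P_{k+1}\varphi_k=\varphi_kP_k$ and continuity, $P(x^*zx)=x^*P(z)x$ for all $x\in\bigcup_kN_k$ and $z\in A$.
\item $L$ is already a closed left ideal. By the previous point it is stable under right multiplication by the dense $*$-subalgebra $\operatorname{span}\bigcup_kN_k$. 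So $L$ is a closed two-sided ideal.
\item Faithfulness of $P_n$ gives $L\cap A_n=0$.
\item For any closed ideal $J\subseteq A$, the map $A_n/(J\cap A_n)\to A/J$ is injective, hence isometric. This forces $J=\overline{\bigcup_n(J\cap A_n)}$.
\end{itemize}
Therefore $L=0$. No norm estimate and no groupoid model is needed.

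\textbf{Maximality.} Once faithfulness is in place, your maximality step goes through, but ``bimodularity and effectiveness'' should be made precise. Bimodularity only shows that $P\bigl(x^*(a-P(a))\bigr)$ is supported in the fixed-point set of $\alpha_x$. On the interior of that set you still need that $x$ agrees locally with an element of some $B_k$. That follows from maximality of $B_k$ at a finite stage.
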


We now describe the inductive system used below. Put $d_n=3^n$ and $A_n=M_{d_n}(\tilde{\mathbb{I}}_2)$.  Let $(x_n)$ be a dense sequence in $(0,1)$. There is a permutation unitary $V_n$ which rearranges the three summands so that, for $x\in M_2\otimes M_{d_n}$ and $a\in M_{d_n}$,
\begin{equation*}
 V_n\operatorname{diag}(x,1_2\otimes a,x)V_n^*
 =1_2\otimes\operatorname{diag}(x,a).
\end{equation*}
Define
\begin{equation*}
 \varphi_n(f)(t)
 =V_n\operatorname{diag}(f(x_n),f(t),f(x_n))V_n^*.
\end{equation*}
Clearly, $\varphi_n$ is well-defined and injective. Fix $\sigma\in\operatorname{Sym}(\underline2\times\underline3)$ and an implementing unitary $U_1=u^\sigma$. Recursively set
\begin{equation*}
 U_{n+1}(t)=V_n\operatorname{diag}(U_n(x_n),U_n(t),U_n(x_n))V_n^*
\end{equation*}
and
\begin{equation*}
 B_n^\sigma
 =U_nC([0,1],D_{2d_n})U_n^*\cap A_n.
\end{equation*}

\begin{lemma}\label{fenjie}
For $\epsilon\in\{0,1\}$, there are $W_{n,\epsilon}\in U(d_n)$ and a permutation unitary $S_{n,\epsilon}\in M_{2d_n}$ such that $	U_n(\epsilon) = (1_2\otimes W_{n,\epsilon})S_{n,\epsilon}$.
Consequently, $B_n^\sigma(\epsilon) = 1_2\otimes W_{n,\epsilon}D_{d_n}W_{n,\epsilon}^*.$
\end{lemma}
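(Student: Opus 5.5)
Induction on $n$ is the natural route. The decomposition $U_n(\epsilon)=(1_2\otimes W_{n,\epsilon})S_{n,\epsilon}$ is stable under the recursion defining $U_{n+1}$.

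\emph{Base case, $n=1$.} By Definition~\ref{niuzhuan}, $U_1(0)=u^\sigma_0=\sigma$ and $U_1(1)=u^\sigma_1=\mathbf 1$, and both are permutation unitaries in $M_2\otimes M_3$. We therefore take $W_{1,\epsilon}=1_3$ and $S_{1,0}=\sigma$, $S_{1,1}=\mathbf 1$.

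\emph{Inductive step.} Assume the decomposition holds for $U_n$. Then
\[
U_{n+1}(\epsilon)=V_n\operatorname{diag}\bigl(U_n(x_n),U_n(\epsilon),U_n(x_n)\bigr)V_n^*.
\]
The middle block factors as $U_n(\epsilon)=(1_2\otimes W_{n,\epsilon})S_{n,\epsilon}$. The outer blocks $U_n(x_n)$ are arbitrary unitaries in $M_2\otimes M_{d_n}$, so they have no such factorisation. The key point is that $V_n$ places the two copies of $x$ in the $1_2\otimes(\cdot)$ position. Since $\operatorname{diag}(x,1_2\otimes a,x)$ is conjugated to $1_2\otimes\operatorname{diag}(x,a)$, the two outer blocks are merged into a single block $x=U_n(x_n)\in M_{2d_n}$, viewed in the second tensor factor of $1_2\otimes M_{2d_n}$.

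This identity holds only for pairs of equal outer blocks and a middle block of the form $1_2\otimes a$. So we first factor the middle term as
\[
\operatorname{diag}\bigl(U_n(x_n),(1_2\otimes W_{n,\epsilon})S_{n,\epsilon},U_n(x_n)\bigr)
=\operatorname{diag}\bigl(U_n(x_n),1_2\otimes W_{n,\epsilon},U_n(x_n)\bigr)\cdot\operatorname{diag}(1,S_{n,\epsilon},1).
\]
Inserting $V_n^*V_n$ between the two factors gives
\[
U_{n+1}(\epsilon)=\bigl(1_2\otimes\operatorname{diag}(U_n(x_n),W_{n,\epsilon})\bigr)\cdot V_n\operatorname{diag}(1,S_{n,\epsilon},1)V_n^*.
\]
We set $W_{n+1,\epsilon}=\operatorname{diag}(U_n(x_n),W_{n,\epsilon})$. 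This lies in $U(2d_n+d_n)=U(d_{n+1})$, so the dimensions match. We set $S_{n+1,\epsilon}=V_n\operatorname{diag}(1,S_{n,\epsilon},1)V_n^*$, which is a product of permutation unitaries and hence a permutation unitary.

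\emph{The consequence.} A permutation unitary $S$ normalises $D_{2d_n}=D_2\otimes D_{d_n}$. Hence
\[
U_n(\epsilon)D_{2d_n}U_n(\epsilon)^*=(1_2\otimes W)(D_2\otimes D_{d_n})(1_2\otimes W)^*.
\]
The fibre $B_n^\sigma(\epsilon)$ is contained in this algebra intersected with $1_2\otimes M_{d_n}$, which equals $1_2\otimes WD_{d_n}W^*$.

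We still need equality of the fibre with this intersection, not merely inclusion. One option is to note that the fibre contains the image of the constant-at-endpoints elements. The cleaner option is non-degeneracy: $B_n^\sigma$ is Cartan and non-degenerate, which we would check via Lemma~\ref{lijixian} or directly, as in the argument of \cref{biaozhunxing}. Then $\dim B_n^\sigma(\epsilon)=d_n$, and since the fibre is contained in $1_2\otimes WD_{d_n}W^*$, which also has dimension $d_n$, equality follows. Alternatively, for $n\ge2$, $U_n$ is constant near the endpoints only up to $U_n(t)$, and lifting diagonal projections of $1_2\otimes WD_{d_n}W^*$ to sections of the form $U_n h U_n^*$ with $h$ diagonal and constant realises each element directly.

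\emph{Main obstacle.} I expect the bookkeeping of $V_n$ to be the delicate part: checking that the middle-block factorisation is compatible with $V_n$, and that $V_n$ really sends $\operatorname{diag}(1,1_2\otimes W,1)$ to $1_2\otimes\operatorname{diag}(1,W)$. The latter is the defining identity with $x=1$.
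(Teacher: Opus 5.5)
Your proposal is correct and is essentially the paper's proof. It uses the same base case $W_{1,\epsilon}=1$, $S_{1,0}=P_\sigma$, $S_{1,1}=1$, the same inductive factorisation giving $W_{n+1,\epsilon}=\operatorname{diag}(U_n(x_n),W_{n,\epsilon})$ and $S_{n+1,\epsilon}=V_n\operatorname{diag}(1,S_{n,\epsilon},1)V_n^*$, and the same intersection computation at the endpoint. The paper leaves implicit that $B_n^\sigma(\epsilon)$ equals that intersection. Do not prove this through non-degeneracy: Lemma~\ref{lijixian} and Theorem~\ref{biaozhunxing} do not give it, and Lemma~\ref{xiangrong} deduces non-degeneracy from this very lemma, so the argument would be circular. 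Instead, lift $d$ directly as $t\mapsto\chi(t)U_n(t)hU_n(t)^*$, where $h=U_n(\epsilon)^*dU_n(\epsilon)\in D_{2d_n}$ and $\chi\in C([0,1])$ satisfies $\chi(\epsilon)=1$ and $\chi(1-\epsilon)=0$. The cutoff is needed because a constant $h$ alone need not satisfy the boundary condition at $1-\epsilon$.
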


\begin{proof}
At $n=1$, take
\[W_{1,0}=W_{1,1}=1,\qquad S_{1,0}=P_\sigma,\qquad S_{1,1}=1. \]
Suppose it holds at stage $n$, then
\[ V_n\operatorname{diag} \bigl(U_n(x_n),1_2\otimes W_{n,\epsilon},U_n(x_n)\bigr)V_n^* = 1_2\otimes \operatorname{diag}\bigl(U_n(x_n),W_{n,\epsilon}\bigr). \]
The remaining factor $	S_{n+1,\epsilon} = V_n\operatorname{diag}(1,S_{n,\epsilon},1)V_n^*$ is a permutation unitary. This proves the induction and the formula for the endpoint diagonal. Indeed, writing $U_n(\epsilon)=(1_2\otimes W)S$ gives $U_n(\epsilon)D_{2d_n}U_n(\epsilon)^* \cap(1_2\otimes M_{d_n}) = 1_2\otimes WD_{d_n}W^*.$
\end{proof}

If the permutation underlying $S_{n,\epsilon}$ is denoted by the same letter, set $q_{n,\epsilon}(j) =\operatorname{pr}_2(S_{n,\epsilon}j)$, $j\in \underline{2}\times \underline{d_n}$, $\epsilon=0,1$, and define
\begin{equation*}
R_n(a,b) = \#\bigl\{ j\in\underline{2}\times \underline{d_n}: q_{n,0}(j)=a,\ q_{n,1}(j)=b \bigr\}, \qquad a,b\in\underline{d_n}.
\end{equation*}

\begin{lemma}\label{zuobiao}
The spectrum of $B_n^\sigma$ is the realization of the bi-partite graph with edge set $\underline{2}\times \underline{d_n}$, in which $j$ has left endpoint $q_{n,0}(j)$ and right endpoint $q_{n,1}(j)$. Its reduced matrix is $R_n$, and $R_1=R_\sigma$.
\end{lemma}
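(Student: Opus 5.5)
We describe $B_n^\sigma$ exactly as in the proofs of Theorems~\ref{pu} and~\ref{putu}, but with the two endpoint permutations now coming from Lemma~\ref{fenjie} instead of the single pair $(\sigma,1)$. Since $U_n$ is a continuous unitary path on $[0,1]$, every element of $B_n^\sigma$ has the form $U_nhU_n^*$ with $h\in C([0,1],D_{2d_n})$. Such an element lies in $A_n$ exactly when $U_n(\epsilon)h(\epsilon)U_n(\epsilon)^*\in 1_2\otimes M_{d_n}$ for $\epsilon=0,1$. The minimal projection fields $t\mapsto U_n(t)e_{jj}U_n(t)^*$, for $j\in\underline 2\times\underline{d_n}$, are continuous and pairwise orthogonal. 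They give $2d_n$ strands over $(0,1)$, one for each edge $j$.

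The next step is to locate these strands at the endpoints. By Lemma~\ref{fenjie},
\[
U_n(\epsilon)e_{jj}U_n(\epsilon)^*=(1_2\otimes W_{n,\epsilon})\bigl(e_{S_{n,\epsilon}j,\,S_{n,\epsilon}j}\bigr)(1_2\otimes W_{n,\epsilon})^*.
\]
This is a rank-one projection lying under the minimal projection $1_2\otimes W_{n,\epsilon}e_{aa}W_{n,\epsilon}^*$ of $B_n^\sigma(\epsilon)$, where $a=q_{n,\epsilon}(j)$.

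We then compute the spectrum. Evaluating at $t\in(0,1)$ gives characters $(j,t)$. At an endpoint, the characters of $B_n^\sigma$ factor through $B_n^\sigma(\epsilon)\cong D_{d_n}$, so there are $d_n$ endpoint characters, indexed by $a$. The character $(j,t)$ converges to the endpoint character $a=q_{n,\epsilon}(j)$ as $t\to\epsilon$. To justify this, apply it to $U_nhU_n^*$: the limit equals $h_j(\epsilon)$, and membership in $A_n$ forces $h_j(\epsilon)$ to be constant on each fibre $q_{n,\epsilon}^{-1}(a)$. That constraint is exactly the one used in Theorem~\ref{pu}. The same compact-Hausdorff argument as in Theorem~\ref{pu} then identifies $\operatorname{Spec}(B_n^\sigma)$ with the quotient of $(\underline2\times\underline{d_n})\times[0,1]$ that collapses $\{j:q_{n,0}(j)=a\}\times\{0\}$ and $\{j:q_{n,1}(j)=b\}\times\{1\}$ to points. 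This quotient is the realization of the bi-partite graph described in the statement. Counting edges from left vertex $a$ to right vertex $b$ gives $R_n(a,b)$ by definition.

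Finally, we check $R_1=R_\sigma$. At $n=1$ we have $S_{1,0}=P_\sigma$ and $S_{1,1}=1$. Edge $j=(i,j')$ therefore joins $\operatorname{pr}_2(\sigma(i,j'))$ to $j'$. Counting these edges is exactly the calculation in the proof of Theorem~\ref{putu}, and the count is $R_{aj'}(\sigma)$.

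The main obstacle is the endpoint identification. The conjugation by $W_{n,\epsilon}$ must not disturb the labelling of endpoint characters. This holds because $1_2\otimes W_{n,\epsilon}$ is constant and acts only on the $M_{d_n}$ factor, so each endpoint character is determined by the second coordinate of $S_{n,\epsilon}j$.
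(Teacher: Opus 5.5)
Your proposal is correct and follows essentially the paper's route. The factorization $U_n(\epsilon)=(1_2\otimes W_{n,\epsilon})S_{n,\epsilon}$ from Lemma~\ref{fenjie} shows that strand $j$ lies under the endpoint minimal projection $1_2\otimes W_{n,\epsilon}e_{aa}W_{n,\epsilon}^*$ exactly when $q_{n,\epsilon}(j)=a$; the paper writes this as $U_n(\epsilon)^*(1_2\otimes W_{n,\epsilon}e_{aa}W_{n,\epsilon}^*)U_n(\epsilon)=S_{n,\epsilon}^*(1_2\otimes e_{aa})S_{n,\epsilon}$, and $R_1=R_\sigma$ then follows from $S_{1,0}=P_\sigma$, $S_{1,1}=1$. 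Your extra check of the quotient topology, via limits of characters and the endpoint constraint on $h_j(\epsilon)$, only fills in details the paper leaves implicit.
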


\begin{proof}
At the endpoint $\epsilon$, Lemma~\ref{fenjie} gives
\[ U_n(\epsilon)^* \bigl( 1_2\otimes W_{n,\epsilon}e_{aa}W_{n,\epsilon}^* \bigr) U_n(\epsilon) = S_{n,\epsilon}^* (1_2\otimes e_{aa}) S_{n,\epsilon}. \]
Thus, the $\epsilon$-endpoint of the edge indexed by $j$ is the vertex $a$ if and only if $q_{n,\epsilon}(j)=a$. This proves the graph description. At $n=1$, $S_{1,0}=P_\sigma$ and $S_{1,1}=1$, so $R_1=R_\sigma$.
\end{proof}

\begin{lemma}\label{xiangrong}
For every $n$, the subalgebra $B_n^\sigma$ is a non-degenerate $C^*$-diagonal of $A_n$. Moreover, $\varphi_n(B_n^\sigma)\subseteq B_{n+1}^\sigma$ and $\varphi_n\bigl(N_{A_n}(B_n^\sigma)\bigr) \subseteq N_{A_{n+1}}(B_{n+1}^\sigma)$.
The maps
\begin{equation*}
E_n = \operatorname{Ad}(U_n)\circ E_{D_{2d_n}} \circ\operatorname{Ad}(U_n^*)
\end{equation*}
restrict to faithful conditional expectations $E_n\colon A_n\longrightarrow B_n^\sigma$ and satisfy $E_{n+1}\circ\varphi_n=\varphi_n\circ E_n$.
\end{lemma}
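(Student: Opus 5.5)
The plan is to prove the statements in the order they appear, reducing each to fibrewise facts about the diagonal $D_{2d_n}\subseteq M_2\otimes M_{d_n}$ transported by the unitary paths $U_n$.

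\emph{Step 1: $B_n^\sigma$ is a non-degenerate Cartan subalgebra.} The recursion shows that $U_n$ is a continuous unitary path whose endpoint values $U_n(\epsilon)$ factor as in Lemma~\ref{fenjie}. I would first check that $U_n$ is locally constant, up to a continuous monomial correction, near $0$ and near $1$. For $n=1$ this holds by construction. For the inductive step, note that the summands $U_n(x_n)$ are constant, and the middle summand inherits the property. Consequently, near each endpoint $\operatorname{Ad}U_n^*$ carries $B_n^\sigma$ onto a conjugate of the standard Cartan subalgebra, by a unitary $(1_2\otimes W_{n,\epsilon})S_{n,\epsilon}$ that normalizes the relevant structure. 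Over the interior, $B_n^\sigma$ is simply $U_nC(D_{2d_n})U_n^*$.

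Maximal abelianness then holds fibrewise exactly as in Proposition~\ref{biaozhun}. Lemma~\ref{fenjie} gives $B_n^\sigma(\epsilon)=1_2\otimes W D_{d_n}W^*$, which has dimension $d_n$, so $B_n^\sigma$ is non-degenerate. Regularity is local. In the interior it is Theorem~\ref{qici}'s model. Near an endpoint, conjugating by the constant unitary $1_2\otimes W_{n,\epsilon}$, which is an automorphism of $A_n$, reduces the inclusion to a twisted standard one, and this is Cartan by Definition~\ref{niuzhuan}.

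The $C^*$-diagonal property follows because $\gcd(2,d_n)=\gcd(2,3^n)=1$, via Remark~\ref{duijiao}. Alternatively, one can argue fibrewise as at the end of Lemma~\ref{lianxu}: every fibre inclusion is a finite-dimensional MASA in a matrix algebra, so pure states extend uniquely.

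\emph{Step 2: the expectations.} The map $E_n=\operatorname{Ad}(U_n)\circ E_{D_{2d_n}}\circ\operatorname{Ad}(U_n^*)$ is a faithful conditional expectation of $C([0,1],M_{2d_n})$ onto $U_nC(D_{2d_n})U_n^*$. The task is to show that it maps $A_n$ into $A_n$. At an endpoint, write $U_n(\epsilon)=(1_2\otimes W)S$. Since $E_{D}$ commutes with $\operatorname{Ad}S$ for a permutation unitary $S$, we get $E_n(f)(\epsilon)=(1_2\otimes W)E_D\bigl((1_2\otimes W^*)f(\epsilon)(1_2\otimes W)\bigr)(1_2\otimes W^*)$. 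When $f(\epsilon)\in 1_2\otimes M_{d_n}$, this lies in $1_2\otimes WD_{d_n}W^*$, so $E_n(A_n)\subseteq B_n^\sigma$.

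\emph{Step 3: compatibility with $\varphi_n$.} By the recursion,
\[
\operatorname{Ad}(U_{n+1}^*)\circ\varphi_n(f)(t)=V_n\operatorname{diag}\bigl(U_n(x_n)^*f(x_n)U_n(x_n),\,U_n(t)^*f(t)U_n(t),\,U_n(x_n)^*f(x_n)U_n(x_n)\bigr)V_n^*,
\]
because $V_n$ is a permutation unitary and so commutes with $E_D$ up to $\operatorname{Ad}V_n$. The diagonal expectation acts blockwise on block-diagonal matrices. Hence $E_{n+1}\varphi_n(f)=\varphi_n(E_nf)$, where the evaluation at $x_n$ commutes with $E_n$ since $E_n$ is defined pointwise. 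The inclusion $\varphi_n(B_n^\sigma)\subseteq B_{n+1}^\sigma$ is the same computation with $f=U_ngU_n^*$. For normalizers, an element $x$ normalizes $B_n^\sigma$ if and only if $U_n^*xU_n$ is pointwise monomial on the dense set where $B_n^\sigma$ is the full fibre diagonal, hence everywhere by continuity. Block-diagonal sums of monomial matrices conjugated by $V_n$ are again monomial, and membership in $A_{n+1}$ is automatic.

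The main obstacle I expect is the endpoint analysis in Step 1. It requires justifying that the twisted form persists under the recursion, so that the local model near $0$ and $1$ really is a conjugate of $C_{2,d_n}$ inside $J$. I would handle this by induction, carrying the extra hypothesis that $U_n$ is constant on neighbourhoods of $0$ and $1$. This holds for $U_1=u^\sigma$ and is clearly preserved by the recursion.
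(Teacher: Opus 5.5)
Your proposal is correct and follows essentially the same route as the paper. The shared steps are: fibrewise maximality; non-degeneracy from Lemma~\ref{fenjie}; the identical endpoint computation for $E_n$; local regularity near the endpoints; Remark~\ref{duijiao} for the diagonal property; and an explicit version of the paper's ``direct verification'' of compatibility with $\varphi_n$, via the pointwise partial-monomial description of normalizers. Your conjugation by the constant unitary $1_2\otimes W_{n,\epsilon}$, using that $U_n$ is constant near $0$ and $1$, produces exactly the paper's endpoint normalizers $\chi\,U_nS_{n,\epsilon}^*(1_2\otimes e_{jk})S_{n,\epsilon}U_n^*$.

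One small precision: only $1_2\otimes W_{n,\epsilon}$ is an automorphism of $A_n$, since the permutation $S_{n,\epsilon}$ need not preserve $1_2\otimes M_{d_n}$. Because $S_{n,\epsilon}$ normalizes $D_{2d_n}$, the resulting local model is just the standard inclusion of Proposition~\ref{biaozhun}, namely $C_{d_n,2}\subseteq J_{d_n,2}$ after interchanging tensor factors, so no appeal to Definition~\ref{niuzhuan} is needed.
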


\begin{proof}
For every $t\in(0,1)$, the fibre $B_n^\sigma(t)$ is the conjugate by $U_n(t)$ of the standard diagonal $D_{2d_n}$. At either endpoint, Lemma~\ref{fenjie} gives $B_n^\sigma(\epsilon) = 1_2\otimes W_{n,\epsilon}D_{d_n}W_{n,\epsilon}^*$, $\epsilon\in\{0,1\}$, which is a maximal abelian subalgebra of $1_2\otimes M_{d_n}$. In particular, $B_n^\sigma$ is non-degenerate.
	
We first verify that the expectation preserves the endpoint conditions. Suppose that $f(\epsilon)=1_2\otimes a$ and $U_n(\epsilon)=(1_2\otimes W)S$. Since $S$ is a permutation unitary, direct computation gives
\[ E_n(f)(\epsilon) = 1_2\otimes W E_{D_{d_n}}(W^*aW)W^*. \]
Thus $E_n(f)\in A_n$, and hence $E_n$ restricts to a faithful conditional expectation from $A_n$ onto $B_n^\sigma$.
	
We next check regularity. Let $e_{jk}\in M_{d_n}$ be a matrix unit. The element $S^*(1_2\otimes e_{jk})S$ is partial monomial with respect to $D_{2d_n}$. If $\chi$ is supported in a sufficiently small neighbourhood of the endpoint $\epsilon$, then
\[ t\longmapsto \chi(t)U_n(t)S^*(1_2\otimes e_{jk})S U_n(t)^* \]
defines an element of $A_n$ normalizing $B_n^\sigma$. These endpoint normalizers, together with the compactly supported interior matrix units and central cutoff functions, generate $A_n$. Therefore, $B_n^\sigma$ is regular in $A_n$. Its fibrewise maximal abelianness also shows that it is a MASA, so $B_n^\sigma$ is a Cartan subalgebra of $A_n$. It follows from Remark~\ref{duijiao} that it is a $C^*$-diagonal.
	
A direct verification from the definitions, applied to the three assertions in turn, yields
\[
\varphi_n(B_n^\sigma)\subseteq B_{n+1}^\sigma,
\qquad
\varphi_n\bigl( N_{A_n}(B_n^\sigma)\bigr)
\subseteq
 N_{A_{n+1}}(B_{n+1}^\sigma),
\qquad
E_{n+1}\circ\varphi_n=\varphi_n\circ E_n. \qedhere
\]
\end{proof}

\begin{proposition}\label{cunzai}
For every $\sigma$, the algebra
$B^\sigma=\varinjlim(B_n^\sigma,\varphi_n|_{B_n^\sigma})$ is an AD Cartan
subalgebra of $\mathcal{A}=\varinjlim(A_n,\varphi_n)$. $B^\sigma$ is a $C^*$-diagonal.
\end{proposition}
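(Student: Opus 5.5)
The plan is to apply Li's criterion, Lemma~\ref{lijixian}, directly to the inductive system $(A_n,B_n^\sigma,\varphi_n)$ with the expectations $E_n$ of Lemma~\ref{xiangrong}. Almost all hypotheses are already recorded in Lemma~\ref{xiangrong}.

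First, each $(A_n,B_n^\sigma)$ is a Cartan pair, and $B_n^\sigma$ is in fact a $\mathrm{C}^*$-diagonal; this is the first assertion of Lemma~\ref{xiangrong}. The faithful conditional expectations are $P_n=E_n|_{A_n}\colon A_n\to B_n^\sigma$. Second, the connecting maps $\varphi_n$ are injective $*$-homomorphisms. This holds because the middle summand of $\varphi_n(f)(t)$ is $f(t)$ itself, conjugated by the constant unitary $V_n$, and because $\varphi_n(f)$ lies in $A_{n+1}$. At $t=\epsilon\in\{0,1\}$, with $f(\epsilon)=1_2\otimes a$, the displayed identity for $V_n$ gives $\varphi_n(f)(\epsilon)=1_2\otimes\operatorname{diag}(f(x_n),a)\in 1_2\otimes M_{d_{n+1}}$, since $3d_n\cdot 2=2d_{n+1}$ after the rearrangement. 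Third, the three compatibility conditions
\[
\varphi_n(B_n^\sigma)\subseteq B_{n+1}^\sigma,\qquad \varphi_n\bigl(N_{A_n}(B_n^\sigma)\bigr)\subseteq N_{A_{n+1}}(B_{n+1}^\sigma),\qquad E_{n+1}\varphi_n=\varphi_nE_n
\]
are exactly the last assertions of Lemma~\ref{xiangrong}.

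If I were to re-verify the third condition rather than cite it, I would compute fibrewise. By construction $U_{n+1}(t)=V_n\operatorname{diag}(U_n(x_n),U_n(t),U_n(x_n))V_n^*$. Hence $\operatorname{Ad}U_{n+1}^*\circ\varphi_n(f)(t)$ equals $V'\operatorname{diag}(U_n^*f(x_n)U_n,\,U_n^*f(t)U_n,\,U_n^*f(x_n)U_n)V'^*$, where the three blocks are evaluated at the appropriate points and $V'$ is the permutation unitary $V_n$ conjugated through. The diagonal expectation $E_{D_{6d_n}}$ commutes with permutation conjugation and acts blockwise on block-diagonal matrices, which gives $E_{n+1}\varphi_n=\varphi_nE_n$. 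Inclusion of the Cartan subalgebras follows in the same way. For normalizers, $\varphi_n$ is a block-diagonal amplification followed by the permutation conjugation, so it sends elements that are fibrewise $U_n$-monomial to elements that are $U_{n+1}$-monomial, and the normalizer relations pass through.

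With all hypotheses verified, Lemma~\ref{lijixian} shows that $B^\sigma=\varinjlim B_n^\sigma$ is a Cartan subalgebra of $\mathcal A$. Since every $B_n^\sigma$ is a $\mathrm{C}^*$-diagonal, the same lemma shows that $B^\sigma$ is a $\mathrm{C}^*$-diagonal. It is an AD Cartan subalgebra by definition, because each $A_n=M_{d_n}(\tilde{\mathbb I}_2)$ is a dimension drop interval algebra and $B_n^\sigma$ is Cartan in it with $\varphi_n(B_n^\sigma)\subseteq B_{n+1}^\sigma$. The only point needing real care is the normalizer inclusion. The elements $f(x_n)$ placed in the outer blocks are constants normalizing the constant fibre $U_n(x_n)D U_n(x_n)^*$, so they normalize correctly; I expect this to be routine but it is the place to check.
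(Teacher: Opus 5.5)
Your proposal is correct and matches the paper's proof. The paper's proof is a one-line appeal to Li's criterion (Lemma~\ref{lijixian}), with the hypotheses and the $\mathrm{C}^*$-diagonal input supplied by Lemma~\ref{xiangrong}; your fibrewise checks of injectivity, of the endpoint condition for $\varphi_n$, and of the three compatibility conditions (including the normalizer inclusion) spell out the ``direct verification'' that the paper leaves implicit, and they are sound.
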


\begin{proof}
The result follows immediately from Lemma~\ref{lijixian} and Lemma~\ref{xiangrong}.
\end{proof}

The algebra $\mathcal{A}$ is unital and simple, by density of the evaluation points. It has a unique tracial state $\tau$, since the induced maps on tracial state spaces contract norm distances by a factor of $1/3$. The six-term exact sequence gives $K_0(A_n)\cong\mathbb Z$ and $K_1(A_n)\cong\mathbb Z/2\mathbb Z$, with connecting maps given by multiplication by $3$ and the identity, respectively.
Consequently,
\[
\bigl(K_0(\mathcal{A}),K_0(\mathcal{A})^+,[1_\mathcal{A}]\bigr)
\cong
\bigl(\mathbb Z[1/3],\mathbb Z[1/3]_+,1\bigr),
\qquad
K_1(\mathcal{A})\cong\mathbb Z/2\mathbb Z.
\]

The building blocks have decomposition rank at most one \cite{W}, so $\mathcal{A}$ has finite nuclear dimension. Thus $\mathcal{A}$ is $\mathcal Z$-stable by \cite[Theorem~A]{C}. Since $\tau_*(K_0(\mathcal{A}))=\mathbb Z[1/3]$ is dense in $\mathbb R$, \cite[Theorem~6.7 and Corollary~7.3]{R5} imply that $\mathcal{A} $ has real rank zero and stable rank one.
\subsection{Nonuniqueness of AD Cartan subalgebras}\

We recall the Weyl groupoid facts needed below. Let $(A,B)$ be a
Cartan pair and put $X=\operatorname{Spec}(B)$. For $v\in N_A(B)$, define
\[
\operatorname{dom}(v)=\{\xi\in X:(v^*v)(\xi)>0\},
\qquad
\operatorname{ran}(v)=\{\xi\in X:(vv^*)(\xi)>0\}.
\]
The normalizer $v$ induces a homeomorphism
$\alpha_v\colon\operatorname{dom}(v)\to\operatorname{ran}(v)$, characterized by
\[
(v^*bv)(\xi)
=
b\bigl(\alpha_v(\xi)\bigr)(v^*v)(\xi),
\qquad b\in B,\quad \xi\in\operatorname{dom}(v).
\]
The germs of these partial homeomorphisms form the Weyl groupoid
$\mathcal G(A,B)$, whose unit space is $X$. Its orbit through
$\xi\in X$ is
\[
\mathcal G(A,B)\cdot\xi
=
\{r(\gamma):\gamma\in\mathcal G(A,B),\ s(\gamma)=\xi\}.
\]
An isomorphism of Cartan pairs induces an isomorphism of their Weyl
groupoids and hence maps Weyl orbits bijectively onto Weyl orbits;
see \cite{R3}.

For the system constructed above, write
\[
X_n=\operatorname{Spec}(B_n^\sigma),\qquad
X=\operatorname{Spec}(B^\sigma),\qquad
\mathcal G_n^\sigma=\mathcal G(A_n,B_n^\sigma).
\]
The restriction of $\varphi_n$ to $B_n^\sigma$ induces a
surjective map
\[ p_n\colon X_{n+1}\longrightarrow X_n,
\qquad
p_n(\xi)(b)=\xi\bigl(\varphi_n(b)\bigr)
\quad (b\in B_n^\sigma). \]
Thus
\[ X\cong\varprojlim(X_n,p_n). \]
We denote the coordinate projections by
$\pi_n\colon X\to X_n$, so that
$\pi_n=p_n\circ\pi_{n+1}$.

The corresponding statement for Weyl arrows requires an intermediate groupoid. By \cite[Proposition~5.4 and Remarks~5.5--5.6]{L2}, each connecting map gives a diagram
\[
\mathcal G_n^\sigma
\xleftarrow{\theta_n}
\mathcal H_n^\sigma
\xrightarrow{j_n}
\mathcal G_{n+1}^\sigma,
\]
where $j_n$ is an open embedding and $\theta_n$ is surjective, proper, and bijective on each source fibre. Since $\varphi_n$ is unital, $\mathcal H_n^\sigma$ has the full unit space $X_{n+1}$, and the restriction of $\theta_n$ to units is $p_n$. Li's construction realizes $\mathcal G(\mathcal{A},B^\sigma)$ as an increasing union of inverse limits formed from these intermediate groupoids. In particular, every limit Weyl arrow has, from some
stage onward, compatible finite stage coordinates $\gamma_m\in\mathcal G_m^\sigma$ whose source and range are $\pi_m(s(\gamma))$ and $\pi_m(r(\gamma))$, respectively. We use this coordinate description only to compare Weyl orbits; the compatibility maps for arrows are defined through the
intermediate groupoids.

When $p=2$, every component of $\Gamma_\sigma$ is a cycle. An order-$r$ circle is the realization of a cycle whose two bipartition classes each contain $r$ vertices. For $r=1$, this means the two-edge multigraph with adjacency matrix $(2)$. Let $h$ denote the natural height coordinate on such a circle.

\begin{lemma}\label{youxiang}
If $T_r\subseteq\operatorname{Spec}(B_n^\sigma)$ is an order-$r$ circle, then
\[
\#\bigl(
(\mathcal G(A_n,B_n^\sigma)\cdot\xi)\cap T_r
\bigr)
=
\begin{cases}
	2r, & h(\xi)\in(0,1),\\
	r,  & h(\xi)\in\{0,1\}.
\end{cases}
\]
\end{lemma}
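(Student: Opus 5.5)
The plan is to compute the Weyl orbit fibrewise. Write $A=A_n$, $B=B_n^\sigma$ and $X=\operatorname{Spec}(B)$, and let $c\colon X\to[0,1]$ be the map dual to the inclusion of the centre $C([0,1])\subseteq B$. By Lemma~\ref{zuobiao}, $X$ is the realization of the bipartite graph with edge set $\underline 2\times\underline{d_n}$. The point $c(\xi)$ is the parameter along the edge, which is the height $h$ on any circle component. The value $c=0$ occurs at the left vertices $q_{n,0}(j)$ and the value $c=1$ at the right vertices $q_{n,1}(j)$.

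\emph{Step 1: orbits preserve $c$.} For a normalizer $v$ and a central $z\in C([0,1])$, we have $v^*zv=z\,v^*v$. The defining relation for $\alpha_v$ then gives $z(\alpha_v(\xi))=z(\xi)$, so $c\circ\alpha_v=c$. Hence every Weyl orbit is contained in a single level set $c^{-1}(t)$.

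\emph{Step 2: the orbit fills the level set.} For $t\in(0,1)$, the set $c^{-1}(t)$ consists of the $2d_n$ characters of $B(t)=U_n(t)D_{2d_n}U_n(t)^*$, one on each edge. For any two indices $k,l$, take a bump $\chi$ supported in a small interval around $t$ inside $(0,1)$, and put
\[
v=\chi\,U_n e_{kl}U_n^*.
\]
Then $v$ lies in $A$, normalizes $B$, and satisfies $\alpha_v(\xi_l)=\xi_k$, where $\xi_k$ denotes the character of $B(t)$ supported on the $k$-th projection. So the orbit of $\xi$ is all of $c^{-1}(t)$.

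For $t=\epsilon\in\{0,1\}$, the set $c^{-1}(\epsilon)$ consists of the $d_n$ vertices on the $\epsilon$ side. By Lemma~\ref{fenjie}, $B(\epsilon)=1_2\otimes W_{n,\epsilon}D_{d_n}W_{n,\epsilon}^*$. For a matrix unit $e_{ab}\in M_{d_n}$, take the endpoint normalizer from the proof of Lemma~\ref{xiangrong}:
\[
\chi\,U_nS^*\bigl(1_2\otimes e_{ab}\bigr)SU_n^*,
\]
with $\chi$ supported near $\epsilon$, and if needed conjugate it by $1_2\otimes W_{n,\epsilon}$ so that its value at $\epsilon$ is $1_2\otimes W_{n,\epsilon}e_{ab}W_{n,\epsilon}^*$. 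This element moves vertex $b$ to vertex $a$ on the $\epsilon$ side, so the orbit is all of $c^{-1}(\epsilon)$.

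\emph{Step 3: count.} It remains to intersect each level set with $T_r$. An order-$r$ circle is a cycle with $r$ vertices in each bipartition class. Since it is a cycle, it has $2r$ edges; for $r=1$ these are the two parallel edges. Each edge meets $c^{-1}(t)$ in exactly one point when $0<t<1$, which gives $2r$ points. At $t=\epsilon$ the intersection is the set of $r$ vertices of $T_r$ on the $\epsilon$ side, which gives $r$ points.

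The main obstacle is Step~2 at the endpoints. There the orbit must be shown to be transitive on the vertices rather than on edge germs, and this requires checking that the endpoint normalizers really induce the vertex permutation (the identification via $q_{n,\epsilon}$ in Lemma~\ref{zuobiao}).
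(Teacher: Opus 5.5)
Your proposal is correct and follows essentially the same route as the paper. Height is preserved because $C([0,1])$ is central. The interior matrix units $\chi U_n e_{kl}U_n^*$ make each interior fibre a single orbit, and the endpoint matrix-unit lifts from Lemma~\ref{xiangrong} do the same for the $d_n$ endpoint vertices. Counting the $2r$ edges of the cycle, and its $r$ vertices on each side, then gives the result.

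One small remark: the optional extra conjugation by $1_2\otimes W_{n,\epsilon}$ is unnecessary, and in general it would not preserve normalization. Lemma~\ref{fenjie} already gives the endpoint value $1_2\otimes W_{n,\epsilon}e_{ab}W_{n,\epsilon}^*$, from which you read off directly in the fibre $B(\epsilon)$ that vertex $b$ is sent to vertex $a$. So the endpoint obstacle you anticipated does not actually arise.
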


\begin{proof}
Let $v$ be an element of the normalizer and $\xi\in\operatorname{dom}(v)$. Since $C([0,1])\subseteq Z(A_n)$, we have
\begin{align*}
	z(\xi)&=z(\alpha_v(\xi)),
\end{align*}
which holds for any $z\in C([0,1])$. So the partial homeomorphism induced by a normalizer preserves height. Interior matrix units connect all diagonal characters in a matrix fibre, and $T_r$ contains $2r$ characters over an interior height. At an endpoint, the fibre drops from $M_{2d_n}$ to $M_{d_n}$, and the circle contains $r$ characters. The endpoint matrix-unit lifts from Lemma~\ref{xiangrong} connect them.
\end{proof}

\begin{lemma}\label{houji}
After ordering the endpoint vertices as the old variable vertices followed by the constant vertices, the reduced matrices satisfy
\begin{equation*}
	R_{n+1}=R_n\oplus 2I_{2d_n}.
\end{equation*}
On spectra, every old circle has one variable successor on which the dual bonding map is a height-preserving homeomorphism. The two point-evaluation branches produce $2d_n$ new order-one circles, each of which is mapped constantly to a point at height $x_n$.
\end{lemma}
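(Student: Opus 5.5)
The plan is to compute everything from the recursion $U_{n+1}(t)=V_n\operatorname{diag}(U_n(x_n),U_n(t),U_n(x_n))V_n^*$, together with the explicit endpoint factorisation produced in the proof of Lemma~\ref{fenjie}. That proof gives
\[
W_{n+1,\epsilon}=\operatorname{diag}(U_n(x_n),W_{n,\epsilon}),\qquad S_{n+1,\epsilon}=V_n\operatorname{diag}(1,S_{n,\epsilon},1)V_n^*.
\]
By Lemma~\ref{zuobiao}, the graph $\Gamma_{n+1}$ has edge set $\underline2\times\underline{d_{n+1}}$, and the edge $j$ joins $q_{n+1,0}(j)$ to $q_{n+1,1}(j)$. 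So the first task is to compute $q_{n+1,\epsilon}$ explicitly.

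\textbf{Step 1: the index map of $V_n$.} Split $\underline{2}\times\underline{3d_n}$ into the three diagonal blocks of $\operatorname{diag}(x,1_2\otimes a,x)$; each block has size $2d_n$. The defining identity $V_n\operatorname{diag}(x,1_2\otimes a,x)V_n^*=1_2\otimes\operatorname{diag}(x,a)$ tells us how $V_n$ moves indices:
\begin{itemize}
\item the $k$-th outer block ($k=1,2$), at position $c\in\underline{2d_n}$, goes to $(k,c)$;
\item the middle index $(i,a)\in\underline2\times\underline{d_n}$ goes to $(i,2d_n+a)$.
\end{itemize}
The permutation $S_{n+1,\epsilon}$ is the identity on the two outer blocks and acts as $S_{n,\epsilon}$ on the middle block. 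Therefore
\[
q_{n+1,\epsilon}(k,c)=c,\qquad q_{n+1,\epsilon}(i,2d_n+a)=2d_n+q_{n,\epsilon}(i,a)\qquad(\epsilon=0,1).
\]

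\textbf{Step 2: the reduced matrix.} The middle edges join $2d_n+q_{n,0}$ to $2d_n+q_{n,1}$, so they reproduce exactly the entries of $R_n$ on the vertices $2d_n+1,\dots,3d_n$. Each constant vertex $c\le 2d_n$ receives exactly two edges, namely $k=1,2$, and both join $c$ on the left to $c$ on the right. This gives the block $2I_{2d_n}$. Reordering so that the old variable vertices come first yields $R_{n+1}=R_n\oplus2I_{2d_n}$. In particular, each constant vertex spans an order-one circle, and there are $2d_n$ of them.

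\textbf{Step 3: the dual bonding map $p_n$.} A character of $B_{n+1}^\sigma$ at height $t$ on strand $j$ is $b\mapsto\bigl(U_{n+1}(t)^*bU_{n+1}(t)\bigr)_{jj}$. Compose it with $\varphi_n$. Since $U_{n+1}$ and $\varphi_n$ are conjugated by the same $V_n$ into block-diagonal form, this composite becomes:
\begin{itemize}
\item for a middle strand $(i,a)$: $b\mapsto (U_n(t)^*b(t)U_n(t))_{(i,a)(i,a)}$, which is the character of $B_n^\sigma$ on strand $(i,a)$ at the same height $t$;
\item for an outer strand $(k,c)$: $b\mapsto (U_n(x_n)^*b(x_n)U_n(x_n))_{cc}$, which does not depend on $t$.
\end{itemize}
Consequently, $p_n$ maps each old-circle successor homeomorphically and height-preservingly onto the old circle; Step 2 shows that its endpoint identifications are those of $R_n$. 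Each new order-one circle is sent constantly to the point of $X_n$ at height $x_n$ on strand $c$.

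\textbf{Main obstacle.} I expect the hard part to be the index bookkeeping for $V_n$. One has to confirm that the two outer copies really become the two tensor copies $k=1,2$ with the same second coordinate $c$, rather than, say, landing at different positions. One must also check that the character computation in Step 3 is compatible with the endpoint quotient at $\epsilon\in\{0,1\}$. To handle the latter, I would verify it through Lemma~\ref{fenjie}: $S_{n+1,\epsilon}$ commutes with the block decomposition, so the endpoint identifications on the middle block are exactly those from stage $n$.
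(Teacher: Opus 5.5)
Your proposal is correct and takes the same route as the paper. The middle block inherits the stage-$n$ endpoint incidence maps, which gives $R_n$. The two point-evaluation copies give two parallel edges at each constant vertex, which gives $2I_{2d_n}$. The spectral statements then follow by composing characters with $\varphi_n$.

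Your formulas $q_{n+1,\epsilon}(k,c)=c$ and $q_{n+1,\epsilon}(i,2d_n+a)=2d_n+q_{n,\epsilon}(i,a)$, derived from the factorisation in Lemma~\ref{fenjie}, make precise what the paper asserts in one line. The index concern you flag is settled by the defining identity for $V_n$: because it must hold for every $x$, each outer block lands in one tensor copy with its second coordinate unchanged. Any remaining freedom in choosing $V_n$ only relabels edges and does not affect the reduced matrix.
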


\begin{proof}
Order the endpoint vertices at both endpoints by placing first the $d_n$ vertices arising from the middle variable block and then the $2d_n$ vertices arising from evaluation at $x_n$.
	
The middle block inherits the two endpoint incidence maps from the preceding stage. It therefore contributes exactly the old reduced matrix $R_n$. For each of the $2d_n$ diagonal characters over $x_n$, the two identical point-evaluation blocks have the same new left and right endpoint vertices. They consequently give two parallel edges between the corresponding vertices. Altogether, these new components contribute the block $2I_{2d_n}$. Hence $R_{n+1}=R_n\oplus 2I_{2d_n}$. The assertions concerning the spectra follow directly from Gelfand duality.
\end{proof}

If $T$ is a circle component of $\operatorname{Spec}(B)$, define
\begin{equation*}
 \mu_B(T)=\sup_{\xi\in T}
 \#\bigl((\mathcal{G}(A,B)\cdot\xi)\cap T\bigr).
\end{equation*}

\begin{lemma}\label{bubian}
Suppose $\alpha:(C,E)\to(C',E')$ is an isomorphism of Cartan pairs, and let $\widehat\alpha:\operatorname{Spec}(E')\to\operatorname{Spec}(E)$ be the induced homeomorphism.  For every circle component $T\subseteq\operatorname{Spec}(E')$,
\[ \mu_{E'}(T)=\mu_E(\widehat\alpha(T)). \]
\end{lemma}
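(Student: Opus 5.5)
The plan is to transport Weyl orbits along $\widehat\alpha$. First I fix conventions. An isomorphism of Cartan pairs $\alpha\colon(C,E)\to(C',E')$ is a $*$-isomorphism $C\to C'$ with $\alpha(E)=E'$. It induces $\widehat\alpha\colon\operatorname{Spec}(E')\to\operatorname{Spec}(E)$ by $\widehat\alpha(\xi)=\xi\circ\alpha|_E$. This map is a homeomorphism, so it carries connected components to connected components. Hence $\widehat\alpha(T)$ is again a circle component, and the right-hand side is defined.

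The key step is to show that $\widehat\alpha$ maps Weyl orbits onto Weyl orbits. I will show directly that, for every $\xi\in\operatorname{Spec}(E')$,
\[
\widehat\alpha\bigl(\mathcal G(C',E')\cdot\xi\bigr)=\mathcal G(C,E)\cdot\widehat\alpha(\xi).
\]
Since $\alpha$ maps $E$ onto $E'$, it restricts to a bijection $N_C(E)\to N_{C'}(E')$.

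Take $v\in N_C(E)$ and put $v'=\alpha(v)$. Evaluating $\alpha(v^*v)=v'^*v'$ at $\xi$ gives
\[
(v'^*v')(\xi)=(v^*v)(\widehat\alpha(\xi)).
\]
Hence $\widehat\alpha(\operatorname{dom}(v'))=\operatorname{dom}(v)$. Applying $\alpha$ to the defining identity of $\alpha_v$ and evaluating at $\xi$ yields
\[
\widehat\alpha\circ\alpha_{v'}=\alpha_v\circ\widehat\alpha \qquad\text{on }\operatorname{dom}(v').
\]
The orbit of a point is the set of images of that point under the partial homeomorphisms $\alpha_v$; passing to germs does not change these images. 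The displayed equality of orbits therefore follows. One could instead cite \cite{R3} as the paper does, but the direct computation is short and avoids relying on the twisted-groupoid formalism.

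It then follows that $\widehat\alpha$ restricts to a bijection
\[
(\mathcal G(C',E')\cdot\xi)\cap T\;\longrightarrow\;(\mathcal G(C,E)\cdot\widehat\alpha(\xi))\cap\widehat\alpha(T),
\]
because $\widehat\alpha$ is bijective and carries $T$ onto $\widehat\alpha(T)$. So the cardinalities agree for each $\xi\in T$. As $\xi$ runs over $T$, the point $\widehat\alpha(\xi)$ runs over all of $\widehat\alpha(T)$. Taking suprema then gives $\mu_{E'}(T)=\mu_E(\widehat\alpha(T))$, including the case where both sides are infinite.

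The only point requiring care is the intertwining computation $\widehat\alpha\circ\alpha_{v'}=\alpha_v\circ\widehat\alpha$. There I must keep track of the contravariance of $\widehat\alpha$ and use that $\alpha$ maps normalizers onto normalizers. Everything else is formal.
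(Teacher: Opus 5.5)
Your proof is correct and follows the same route as the paper: transport Weyl orbits along $\widehat\alpha$, use that a homeomorphism carries circle components to circle components, and then compare cardinalities and suprema. The only difference is that you verify the orbit correspondence directly, via the intertwining identity $\widehat\alpha\circ\alpha_{v'}=\alpha_v\circ\widehat\alpha$ on normalizers, whereas the paper obtains it by citing the functoriality of Renault's Weyl groupoid construction.
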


\begin{proof}
The Weyl construction turns $\alpha$ into an isomorphism $\mathcal G(C',E')\cong \mathcal G(C,E)$.  On unit spaces it restricts to $\widehat\alpha$ and maps orbits bijectively onto orbits.  A homeomorphism maps connected components onto connected components, proving the formula.
\end{proof}

\begin{lemma}\label{yuanzhou}
The non-singleton connected components of $\operatorname{Spec}(B^\sigma)$ are precisely the persistent circles obtained by following the variable successors of finite stage circles. In particular, every finite-stage order-$r$ circle gives rise to a connected component homeomorphic to $S^1$.
\end{lemma}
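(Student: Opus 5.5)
We describe $X=\operatorname{Spec}(B^\sigma)\cong\varprojlim(X_n,p_n)$ through the bonding maps of Lemma~\ref{houji}. Each $X_n$ is a finite disjoint union of circles, by Theorem~\ref{putu} and Proposition~\ref{m2}. Each $p_n$ sends a circle of $X_{n+1}$ either homeomorphically onto an old circle (variable successor) or to a single point (new order-one circle). For a point $\xi=(\xi_n)\in X$, let $T_n(\xi)$ be the circle of $X_n$ containing $\xi_n$. Since $p_n(T_{n+1}(\xi))\subseteq T_n(\xi)$, two cases arise. In the first, from some stage $n_0$ onward every $T_{n+1}(\xi)$ is the variable successor of $T_n(\xi)$; we call such a thread persistent. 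In the second, infinitely many $T_{n+1}(\xi)$ are new circles collapsed by $p_n$.

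\textbf{Persistent threads.} Fix a circle $T\subseteq X_{n_0}$ and let $T^{(k)}\subseteq X_{n_0+k}$ be its iterated variable successor. The set
\[
\widetilde T=\{\xi\in X:\ \xi_{n}\in T^{(n-n_0)}\ \text{for all } n\ge n_0\}
\]
is closed in $X$. By Lemma~\ref{houji}, the projection $\pi_{n_0}|_{\widetilde T}$ is a continuous bijection onto $T$, with inverse given by successively applying the inverse homeomorphisms. A continuous bijection of compact Hausdorff spaces is a homeomorphism, so $\widetilde T\cong S^1$ and $\widetilde T$ is connected. Next we show $\widetilde T$ is clopen. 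The set $\pi_{n_0}^{-1}(T)\cap\bigcap_{n>n_0}\pi_n^{-1}(T^{(n-n_0)})$ is a decreasing intersection of clopen sets. We must rule out points that leave the successor chain at some later stage. For such a point $\eta$, $\eta_{n+1}$ lies on a new circle $C$ with $p_n(C)$ a point at height $x_n$. The set $\pi_{n+1}^{-1}(C)$ is clopen and disjoint from $\widetilde T$. Hence $\widetilde T$ is the component of each of its points, but it may fail to be open in $X$. That is acceptable, because components need only be closed. To prove maximality, take any connected $Y\supseteq\widetilde T$. For each $n$, $\pi_n(Y)$ is connected, so it lies in a single component $T^{(n-n_0)}$ of $X_n$. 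Therefore $Y\subseteq\widetilde T$.

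\textbf{Non-persistent threads.} We show that the component of such a $\xi$ is $\{\xi\}$. Let $\eta\ne\xi$ lie in the same component $Y$ of $X$. Then $\pi_n(Y)$ is connected and contained in $T_n(\xi)$ for every $n$. Choose $n$ such that $T_{n+1}(\xi)$ is a new circle. Then $\pi_n(Y)=p_n(\pi_{n+1}(Y))$ is a single point. This happens for infinitely many $n$, so $\pi_n(Y)$ is a singleton for infinitely many $n$, and hence for all smaller $n$ as well. Thus $\eta_n=\xi_n$ for all $n$, so $\eta=\xi$. This shows that the non-singleton components are exactly the sets $\widetilde T$.

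\textbf{Main obstacle.} The delicate step is the bookkeeping of the first case. We must check from Lemma~\ref{houji} that the variable successor of a circle is its only preimage circle that is not collapsed. Equivalently, for every $n$, the preimage $p_n^{-1}(T)$ is the disjoint union of the successor and finitely many new circles, each collapsed to a point. Once this is in place, the inverse-limit argument shows that every finite-stage order-$r$ circle yields exactly one persistent component $\widetilde T\cong S^1$.
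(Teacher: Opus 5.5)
Your proof is correct and takes essentially the same route as the paper. A persistent thread gives a closed copy of $S^1$, and it is a component because any outside point is cut off by a clopen set of the form $\pi_m^{-1}(C)$, or equivalently by your projection-connectivity maximality argument; a thread that is collapsed infinitely often forces every $\pi_n(Y)$ to be a singleton. One correction: $\widetilde T$ is not clopen, since persistent circles have empty interior (part (ii) of the proof of Theorem~\ref{michou}), so delete the sentence announcing clopenness---your maximality argument does not need it.
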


\begin{proof}
Write $X_n=\operatorname{Spec}(B_n^\sigma)$ and $X=\operatorname{Spec}(B^\sigma)$, and let $\pi_n\colon X\to X_n$ be the coordinate projections. By Lemma~\ref{houji}, every finite stage circle has a variable successor on which the dual bonding map is a homeomorphism. Each new point-evaluation circle is mapped constantly to a point.
	
Let $C_N\subseteq X_N$ be a finite stage circle, and let $C_n\subseteq X_n$, for $n\geq N$, be its successive variable successors. Their inverse limit
\[ K=\bigcap_{n\geq N}\pi_n^{-1}(C_n) \]
is homeomorphic to $C_N$, so $K$ is connected. To see that it is a connected component, take $y\in X\setminus K$. There is some $m\geq N$ such that $\pi_m(y)\notin C_m$. Since $C_m$ is a connected component of the finite disjoint union of circles $X_m$, it is clopen. Hence $U_m=\pi_m^{-1}(C_m)$ is clopen in $X$, contains $K$, and does not contain $y$. No connected subset of $X$ can contain both $K$ and $y$. Thus $K$ is a connected component.
	
Conversely, let $L$ be a connected component of $X$. For each $n$, the connected set $\pi_n(L)$ lies in a finite stage circle $D_n\subseteq X_n$. If the resulting component thread has point-evaluation successors infinitely often, then for every fixed $n$, a later constant bonding map makes $\pi_n(L)$ a singleton. Since the coordinate projections separate points, $L$ is a singleton. Otherwise the thread follows variable successors from some stage onward. Then $L$ lies in the persistent circle determined by that stage, and maximality of $L$ shows that it equals that circle. This proves the claim.
\end{proof}

\begin{proposition}\label{zhongshu}
For the persistent circle in Lemma~\ref{yuanzhou},
$\mu_{B^\sigma}(T_r)=2r$.
\end{proposition}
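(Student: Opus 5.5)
We prove $\mu_{B^\sigma}(T_r)=2r$ by comparing limit Weyl orbits with finite-stage Weyl orbits through the coordinate projections $\pi_n$. Let $T_r$ be the persistent circle arising from a finite-stage order-$r$ circle $C_N\subseteq X_N$, with variable successors $C_n\subseteq X_n$ for $n\geq N$. By Lemma~\ref{houji}, each $\pi_n|_{T_r}\colon T_r\to C_n$ is a height-preserving homeomorphism. The proof has two inequalities.

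For the lower bound $\mu\geq 2r$, fix $\xi\in T_r$ with interior height $h\in(0,1)$. By Lemma~\ref{youxiang}, the orbit of $\pi_N(\xi)$ meets $C_N$ in $2r$ points, all of height $h$. We realise these in the limit by pushing finite-stage normalizers forward: a normalizer $v\in N_{A_N}(B_N^\sigma)$ maps to $\varphi_{\infty,N}(v)$, which normalizes $B^\sigma$ by Lemma~\ref{xiangrong}. Applying $\varphi_n$ to an interior matrix unit supported near height $h$ yields a sum of a middle block and constant blocks at $x_n$. Its middle block acts on the variable successor exactly as $v$ acts on $C_N$. Hence the induced partial homeomorphism on $X$ maps $\xi$ to the point of $T_r$ lying over the image of $\pi_N(\xi)$. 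This produces $2r$ distinct points of $(\mathcal G\cdot\xi)\cap T_r$, since $\pi_N$ is injective on $T_r$. A small check is needed here: we must confirm that $\xi$ lies in the domain of the limit normalizer. This holds because $(v^*v)(\pi_N\xi)>0$ and $\pi_N\circ$ evaluation is compatible with $\varphi$.

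For the upper bound $\mu\leq 2r$, take any $\xi\in T_r$ and $\eta\in(\mathcal G\cdot\xi)\cap T_r$. We use the coordinate description recalled before Lemma~\ref{youxiang}: a Weyl arrow $\gamma$ with $s(\gamma)=\xi$ and $r(\gamma)=\eta$ has, for all large $m$, a compatible arrow $\gamma_m\in\mathcal G_m^\sigma$ from $\pi_m(\xi)$ to $\pi_m(\eta)$. Both $\pi_m(\xi)$ and $\pi_m(\eta)$ lie in $C_m$. Fix one such large $m$. Then $\pi_m(\eta)$ lies in $(\mathcal G_m^\sigma\cdot\pi_m\xi)\cap C_m$, which has at most $2r$ points by Lemma~\ref{youxiang}, since $C_m$ is again order $r$. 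Because $\pi_m|_{T_r}$ is injective, $\eta$ ranges over at most $2r$ points. The subtle point is that $m$ may depend on $\eta$. To handle this, we enumerate any $2r+1$ orbit points and take $m$ beyond all of their thresholds. At that single stage they have distinct images in $C_m$, all within one finite orbit, which is a contradiction.

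We expect the main obstacle to be justifying the finite-stage coordinates with source and range in the correct components. The issue is that Li's intermediate groupoid $\mathcal H_n^\sigma$ is only surjective on arrows. We therefore need $\theta_m(\gamma_{m+1})$ to have range $\pi_m(r(\gamma))$, which follows from $\theta_m$ restricting to $p_m$ on units. If that description proved delicate, a fallback is to use that normalizers of $\mathcal A$ are approximated by images of finite-stage normalizers. Their induced partial homeomorphisms preserve the height coordinate, and hence preserve the fibre structure of $\pi_m$, which would again bound the intersection by the finite count $2r$.
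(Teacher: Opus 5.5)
Your proposal is correct and follows essentially the same route as the paper. For the lower bound you push interior matrix-unit normalizers $\chi U_m e_{ji}U_m^*$ forward and track them along the variable successors, and for the upper bound you use the compatible finite-stage coordinates of limit Weyl arrows. The only difference is in packaging the upper bound: the paper uses height preservation of finite-stage arrows to get $h(x)=h(y)$ and then counts the $2r$ points of $T_r$ in one interior height fibre, which avoids your device of choosing a common stage for $2r+1$ orbit points, whereas you apply Lemma~\ref{youxiang} at that stage together with injectivity of $\pi_m|_{T_r}$.
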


\begin{proof}
Suppose that $x,y\in T_r$ are joined by an arrow of $\mathcal G(\mathcal{A},B^\sigma)$. By the compatible finite stage description above, their coordinates are joined by Weyl arrows at every sufficiently large stage. These arrows preserve height, since every normalizer of $B_m^\sigma$ commutes with $C([0,1])$. The variable bonding maps preserve height as well. Hence $h(x)=h(y)$, so an orbit can meet $T_r$ only within a single height fibre. An interior height fibre contains $2r$ points, while an endpoint height fibre contains $r$ points.
	
Now fix $t\in(0,1)$ and $x,y\in T_r$ with $h(x)=h(y)=t$. At a stage $m$ at or after the birth of $T_r$, let $i,j$ be the diagonal indices of $x_m=\pi_m(x)$ and $y_m=\pi_m(y)$. Choose $\chi\in C_c((0,1))$ with $\chi(t)\neq0$, and set
\[ v(s)=\chi(s)U_m(s)e_{ji}U_m(s)^*. \]
Then $v\in N_{A_m}(B_m^\sigma)$, and its partial homeomorphism sends $x_m$ to $y_m$. The connecting maps preserve normalizers, so the image of $v$ in $\mathcal{A}$ normalizes $B^\sigma$. Along the variable-successor chains, its partial homeomorphism sends $x$ to $y$. Thus all $2r$ points of every interior height fibre belong to one Weyl orbit. By the definition of $\mu$, $\mu_{B^\sigma}(T_r)=2r.$
\end{proof}

At the first stage choose twists whose reduced matrices are
\begin{align*}
 R_{(1,1,1)}&=\begin{pmatrix}2&0&0\\0&2&0\\0&0&2\end{pmatrix},&
 R_{(2,1)}&=\begin{pmatrix}1&1&0\\1&1&0\\0&0&2\end{pmatrix},&
 R_{(3)}&=\begin{pmatrix}1&1&0\\1&0&1\\0&1&1\end{pmatrix}.
\end{align*}
Denote the corresponding limit Cartan subalgebras by $D$, $B^{(2,1)}$, and $B^{(3)}$, respectively. For a limit Cartan subalgebra $B$, set
\[
 \mathfrak M(B)=\{\mu_B(T):T\text{ is a circle component of }\operatorname{Spec}(B)\}.
\]
By Lemma~\ref{bubian}, isomorphic Cartan pairs have the same $\mathfrak M$.

\begin{proposition}\label{buweiyi}
The AD Cartan subalgebras above satisfy
\[
 \mathfrak M(D)=\{2\},\qquad
 \mathfrak M(B^{(2,1)})=\{2,4\},\qquad
 \mathfrak M(B^{(3)})=\{2,6\}.
\]
Consequently, the three Cartan pairs are pairwise nonisomorphic, and the three
Cartan subalgebras are pairwise nonconjugate in $\mathcal{A}$.
\end{proposition}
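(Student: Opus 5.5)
The plan is to compute $\mathfrak M$ for each of the three limit Cartan subalgebras by combining the structural description of the limit spectrum with the local orbit count, and then to deduce nonisomorphism and nonconjugacy from the invariance of $\mathfrak M$.

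\emph{Step 1: identify the circle components.} By Lemma~\ref{yuanzhou}, the circle components of $\operatorname{Spec}(B)$ are exactly the persistent circles, that is, the variable-successor threads starting from some finite-stage circle. Lemma~\ref{houji} gives $R_{n+1}=R_n\oplus 2I_{2d_n}$. Inductively, the circles of $X_n$ are therefore the old circles carried along homeomorphically, together with new order-one circles born at each stage. The orders of the persistent circles are thus the orders of the cycles of $R_1=R_\sigma$, together with infinitely many circles of order $1$, since new ones appear at every stage.

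\emph{Step 2: read off the first-stage cycles.} By the block decomposition used in Proposition~\ref{m2}:
\begin{itemize}
\item $R_{(1,1,1)}$ consists of three order-one circles;
\item $R_{(2,1)}$ consists of one order-two circle (the $2\times2$ all-ones block) and one order-one circle;
\item $R_{(3)}$ is a single $3$-cycle, so it is one order-three circle.
\end{itemize}
Proposition~\ref{zhongshu} gives $\mu_B(T_r)=2r$ for every persistent circle. Collecting values then yields
\[
\mathfrak M(D)=\{2\},\qquad \mathfrak M(B^{(2,1)})=\{2,4\},\qquad \mathfrak M(B^{(3)})=\{2,6\}.
\]
The value $2$ always occurs because order-one circles are born at every stage.

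\emph{Step 3: conclude.} By Lemma~\ref{bubian}, an isomorphism of Cartan pairs $(\mathcal A,B)\cong(\mathcal A,B')$ induces a homeomorphism of spectra that carries circle components to circle components and preserves $\mu$. Hence $\mathfrak M$ is an invariant of the pair. The three sets computed above are distinct, so the three pairs are pairwise nonisomorphic. A conjugating automorphism $\alpha\in\operatorname{Aut}(\mathcal A)$ with $\alpha(B)=B'$ is in particular such an isomorphism of pairs, so the three subalgebras are also pairwise nonconjugate.

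The main obstacle is justifying that Proposition~\ref{zhongshu} applies to the order-one circles born at later stages as well as to circles inherited from $R_\sigma$. The proposition's proof works at any stage $m$ at or after a circle's birth, so the same argument covers the newborn circles. One must also confirm that there are no other circle components, for instance limits of point-evaluation threads; Lemma~\ref{yuanzhou} shows that such threads yield singletons. Provided these checks hold, the remaining work is the bookkeeping of block sizes described in Steps 1 and 2.
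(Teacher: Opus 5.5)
Your proposal is correct and follows the paper's own argument. You identify the circle components with the persistent circles via Lemmas~\ref{houji} and~\ref{yuanzhou}, read their orders off the first-stage reduced matrix together with the newborn order-one circles, apply Proposition~\ref{zhongshu} to get $\mu=2r$, and conclude using the invariance of $\mathfrak M$ from Lemma~\ref{bubian}. Your write-up is simply more explicit than the paper's terse proof, notably in observing that for $B^{(3)}$ the value $2$ comes only from circles born at later stages, and that Proposition~\ref{zhongshu} covers these because its proof works at any stage after a circle's birth.
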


\begin{proof}
Apply Lemma~\ref{yuanzhou} and~Proposition~\ref{zhongshu}. The standard system contains only order-one circles. In the $(2,1)$ system, one persistent circle has order two, while the remaining circles born from untwisted summands have order one.  In the $(3)$ system, one persistent circle has order three and all other circles have order one. Therefore, the three distinct sets $\mathfrak M$ prove the assertion.
\end{proof}

\subsection{Infinite families of Cartan subalgebras in a fixed AD algebra}\

Recall that deleting finitely many initial terms does not change the inductive limit, so the tail system $(A_n,\varphi_n)_{n\geq N}$ still has inductive limit $\mathcal{A}$.

\begin{corollary}\label{wuxian}
There exists a unital AD algebra  containing infinitely many pairwise nonconjugate Cartan subalgebras, each of which is a $C^*$-diagonal.
\end{corollary}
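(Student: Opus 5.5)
The plan is to use the tail-system remark immediately preceding the statement. For each $r\geq1$, I would build a Cartan subalgebra $B^{[r]}$ of the fixed algebra $\mathcal{A}$ by starting the construction of Section~\ref{guina} at a stage $N$ with $d_N=3^N\geq r$. Concretely, fix $r$ and choose $N$ with $3^N\geq r$.

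Applying \cref{keda}, I would pick $\sigma_r\in\operatorname{Sym}(\underline2\times\underline{d_N})$ whose reduced matrix $R(\sigma_r)\in\mathcal M(d_N,2,d_N,2)$ is the block sum of one $r\times r$ cycle block and $2I_{d_N-r}$. The cycle block is the one from the proof of Proposition~\ref{m2}: for $r=1$ it is $(2)$, and for $r\geq2$ it has $1$'s on the diagonal and along a cyclic pattern, so that every row and column sum is $2$. By \cref{putu}, $\operatorname{Spec}(B_{\sigma_r})$ at stage $N$ is then one order-$r$ circle together with $d_N-r$ order-one circles.

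I would then run the recursion $U_{n+1}=V_n\operatorname{diag}(U_n(x_n),U_n,U_n(x_n))V_n^*$ starting from $U_N=u^{\sigma_r}$. Lemmas~\ref{fenjie}, \ref{zuobiao} and~\ref{xiangrong} apply verbatim with the index shifted, because their proofs only use the recursion and not the starting stage. Proposition~\ref{cunzai}, via Lemma~\ref{lijixian}, then shows that $B^{[r]}=\varinjlim_{n\geq N}B^{\sigma_r}_n$ is a Cartan subalgebra and a $\mathrm{C}^*$-diagonal of $\varinjlim_{n\geq N}(A_n,\varphi_n)=\mathcal{A}$.

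Next comes the invariant. By Lemma~\ref{houji}, $R_{n+1}=R_n\oplus 2I_{2d_n}$, so the only circle of order larger than one at any stage is the variable successor of the original order-$r$ circle. Lemma~\ref{yuanzhou} identifies the circle components of the limit spectrum with the persistent circles. Proposition~\ref{zhongshu} then gives $\mu=2r$ on the persistent circle coming from the order-$r$ circle and $\mu=2$ on every persistent order-one circle; these exist because each stage creates new order-one circles, and some of them persist by following variable successors. Hence $\mathfrak M(B^{[r]})=\{2,2r\}$.

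For distinct $r\neq r'$ the sets $\{2,2r\}$ and $\{2,2r'\}$ differ, except that $r=1$ gives $\{2\}$, which is still distinct from all the others. Lemma~\ref{bubian} then shows the pairs $(\mathcal{A},B^{[r]})$ are pairwise nonisomorphic, and in particular the $B^{[r]}$ are pairwise nonconjugate by automorphisms of $\mathcal{A}$.

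The main obstacle is justifying that re-indexing the system leaves everything intact. First, the identification of $\mathcal{A}$ with the tail limit has to be compatible with the Cartan structure; this is automatic, since we simply define $B^{[r]}$ inside the tail limit, which is canonically $\mathcal{A}$. Second, Proposition~\ref{zhongshu} has to be applied to circles born at stage $N$ rather than stage $1$. I would handle this by checking that its proof uses only the height-preservation and interior-normalizer argument at stages $m\geq N$, which is unaffected by where the system starts.
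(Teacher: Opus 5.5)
Your proposal is correct and is essentially the paper's own proof. The paper likewise passes to the tail system at a stage $N$ with $d_N\geq r$, realizes $R^{[r]}=(I_r+P_r)\oplus 2I_{d_N-r}$ by Theorem~\ref{keda}, and obtains the $\mathrm{C}^*$-diagonal from Proposition~\ref{cunzai}. It then computes $\mathfrak M(B^{[r]})=\{2,2r\}$ via Lemmas~\ref{houji} and~\ref{yuanzhou} and Proposition~\ref{zhongshu}, and concludes with Lemma~\ref{bubian}; your cycle block borrowed from Proposition~\ref{m2} differs from $I_r+P_r$ only by row and column permutations, so it gives the same order-$r$ circle.
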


\begin{proof}
Fix $r\geq 1$, and choose $N=N(r)$ such that $d_N\geq r$. Let $P_r$ be the permutation matrix of an $r$-cycle and set
\[ R^{[r]}=(I_r+P_r)\oplus 2I_{d_N-r}. \]
Every row and column of $R^{[r]}$ has sum $2$. Hence, by Theorem~\ref{keda}, it is the reduced matrix of a twist at stage $N$. Applying the preceding construction to the inductive system produces a Cartan subalgebra $B^{[r]}\subseteq \mathcal{A}$, which is a $C^*$-diagonal by Proposition~\ref{cunzai}.	
The block $I_r+P_r$ determines one order-$r$ circle, while all the remaining blocks determine order-one circles. Therefore, Lemma~\ref{houji}, Lemma~\ref{yuanzhou}, and Proposition~\ref{zhongshu} give
\[
\mathfrak M(B^{[r]})=\{2,2r\}.
\]
These sets are distinct for different $r$. Since conjugate Cartan subalgebras have the same invariant by Lemma~\ref{bubian}, the subalgebras $B^{[r]}$, $r\geq1$, are pairwise nonconjugate.
\end{proof}

In fact, the spectra of all Cartan subalgebras obtained from permutation twists at finite stages are mutually homeomorphic. To identify their common homeomorphism type, we use the dense amalgam introduced by \'Swi\k{a}tkowski; see \cite[Section~1]{S2}. In our case, we only need the following characterization for the dense amalgam of $S^1$.

\begin{lemma}[{\cite[Theorem~0.2]{S2}}]\label{dense}
Let $X$ be a compact metrizable space admitting a countably infinite family $\mathcal C$ of pairwise disjoint embedded circles such that:
\begin{enumerate}[label=\textup{(\roman*)}]
\item $\mathcal C$ is null: with respect to one, equivalently every, compatible metric on $X$, for every $\varepsilon>0$, only finitely many $T\in\mathcal C$ satisfy $\operatorname{diam}(T)\geq\varepsilon$;		
\item every $T\in\mathcal C$ has empty interior in $X$;	
\item $\bigcup_{T\in\mathcal C}T$ is dense in $X$;		
\item if $x,y\in X$ do not belong to the same member of $\mathcal C$, then there is a clopen $\mathcal C$-saturated subset $H\subseteq X$ such that $x\in H$ and $y\notin H$.
\end{enumerate}
Here, $H$ is $\mathcal C$-saturated if, for every
	$T\in\mathcal C$, either $T\subseteq H$ or
	$T\cap H=\varnothing$. Then
	\[
	X\cong\widetilde{\sqcup}(S^1).
	\]
\end{lemma}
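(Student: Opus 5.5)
Since this lemma is quoted from \cite[Theorem~0.2]{S2}, I would not reprove it from scratch. The plan is to check that the hypotheses listed here are exactly \'Swi\k{a}tkowski's hypotheses specialised to the model space $Y=S^1$, and then to sketch the mechanism of his proof, so that it is clear where each condition is used.

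\emph{Step 1: matching the hypotheses.} \'Swi\k{a}tkowski defines $\widetilde{\sqcup}(Y)$, for a compact metrizable $Y$, as an inverse limit of finite disjoint unions of copies of $Y$. His characterization asks for a compact metrizable $X$ and a countable family $\mathcal C$ of pairwise disjoint subsets, each homeomorphic to $Y$, which is null, consists of nowhere dense sets, has dense union, and satisfies a separation condition by clopen $\mathcal C$-saturated sets. I would verify these items in turn.
\begin{enumerate}[label=(\alph*)]
\item ``Embedded circle'' means a subset homeomorphic to $S^1 = Y$, so the first requirement matches.
\item Nullity does not depend on the compatible metric. On a compact space, any two compatible metrics are uniformly equivalent: for each $\varepsilon>0$ there is $\delta>0$ such that $d$-diameter at least $\varepsilon$ forces $d'$-diameter at least $\delta$. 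This justifies the parenthetical ``one, equivalently every'' in (i).
\item Condition (ii) is his nowhere-density condition. A circle is closed in $X$, so having empty interior is the same as being nowhere dense.
\item Conditions (iii) and (iv) are verbatim his density and separation conditions.
\end{enumerate}
With these identifications, the conclusion $X\cong\widetilde{\sqcup}(S^1)$ is exactly \cite[Theorem~0.2]{S2}.

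\emph{Step 2: mechanism of the proof.} Given two spaces $X$ and $X'$ with families $\mathcal C$ and $\mathcal C'$ satisfying (i)--(iv), the idea is to build nested finite partitions into clopen saturated sets. At stage $k$, first use (iv) together with compactness to cover $X$ by finitely many clopen $\mathcal C$-saturated pieces of mesh less than $2^{-k}$. Nullity (i) is what allows this: only finitely many circles are too large to fit in a small piece, and each such large circle is isolated as its own ``big'' element. Next, use density (iii) and nowhere-density (ii) to ensure that every piece contains infinitely many small circles. This makes the pieces in $X$ and in $X'$ combinatorially interchangeable, so a back-and-forth argument can match pieces and distinguished circles at each stage. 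The resulting compatible systems of bijections then define a homeomorphism $X\to X'$ carrying $\mathcal C$ onto $\mathcal C'$.

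\emph{Main obstacle.} The delicate step is the refinement in Step 2. The partition must be refined while keeping each already-chosen circle inside a single piece, and while matching the circle itself through a homeomorphism $S^1\to S^1$ compatible with the finer subdivisions of its neighbourhood. This is handled in \cite{S2}, so for the present paper I would rely on the citation. The real work for our application lies later, in verifying (i)--(iv) for $\operatorname{Spec}(B^\sigma)$ using Lemma~\ref{yuanzhou} and the density of $(x_n)$.
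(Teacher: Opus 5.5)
Your proposal agrees with the paper, which also gives no proof of this lemma and simply invokes \cite[Theorem~0.2]{S2}. Your Step~1 checks are a sensible addition: nullity is metric-independent on a compact space, and a closed set has empty interior exactly when it is nowhere dense. The Step~2 sketch goes beyond what the paper does and should not be read literally: by (ii) no member of $\mathcal C$ is open, so a large circle cannot be its own piece of a clopen partition, and a piece containing one of the finitely many large circles must instead be allowed to be large while staying within a small neighbourhood of that circle.
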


In our setting, the family $\mathcal C$ consists of the persistent circle components from Lemma~\ref{yuanzhou}. Variable successors preserve these circles, whereas point-evaluation successors continually produce new order-one circles.

\begin{theorem}\label{michou}
Every Cartan subalgebra $B$ obtained from a permutation twist at a finite stage by the preceding inductive construction satisfies
\[ \operatorname{Spec}(B)\cong\widetilde{\sqcup}(S^1). \]
\end{theorem}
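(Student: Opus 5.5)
We verify the four conditions of Lemma~\ref{dense} for $X=\operatorname{Spec}(B)\cong\varprojlim(X_n,p_n)$, taking $\mathcal C$ to be the family of persistent circles from Lemma~\ref{yuanzhou}. Each $X_n$ is a finite disjoint union of circles. By Lemma~\ref{houji}, $X_{n+1}$ consists of variable successors of the circles of $X_n$, on which $p_n$ is a height-preserving homeomorphism, together with $2d_n$ new order-one circles, each collapsed to a point at height $x_n$. Thus every finite-stage circle determines exactly one persistent circle $K\subseteq X$, and $\mathcal C$ is countably infinite. Its members are pairwise disjoint, being distinct components by Lemma~\ref{yuanzhou}, and each is an embedded circle.

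\textbf{Null family.} We fix the standard metric $d(x,y)=\sum_n 2^{-n}\min(1,d_n(\pi_n x,\pi_n y))$ for chosen metrics $d_n$ on $X_n$. Let $K$ be born at stage $N$. Then $\pi_n(K)$ is a single point for every $n<N$, since the new circle collapses to a point under $p_{N-1}$. Hence $\operatorname{diam}(K)\le\sum_{n\ge N}2^{-n}=2^{1-N}$. Only finitely many circles are born before any given stage, which gives condition~(i).

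\textbf{Saturated clopen sets.} For every finite-stage circle $C\subseteq X_n$, the set $\pi_n^{-1}(C)$ is clopen. It is also $\mathcal C$-saturated: each persistent circle $K$ has $\pi_n(K)$ connected, so $\pi_n(K)$ lies in a single component of $X_n$. For condition~(iv), take $x,y$ not in a common member of $\mathcal C$. If $\pi_n(x)$ and $\pi_n(y)$ lie in different components of $X_n$ for some $n$, then $H=\pi_n^{-1}(C)$ with $C$ the component containing $\pi_n(x)$ separates them. Otherwise their component threads coincide at every stage. Such a thread either follows variable successors eventually, in which case both points lie in the same persistent circle (excluded), or passes through point-evaluation successors infinitely often. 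In the latter case $\pi_n(x)=\pi_n(y)$ for all $n$, so $x=y$. This argument mirrors the proof of Lemma~\ref{yuanzhou}.

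\textbf{Density and empty interior.} We first show that the sets $\pi_n^{-1}(U)$, with $U$ open in $X_n$, form a base for $X$. For density (iii), take $U\subseteq C\subseteq X_n$. The persistent circle $K$ obtained by following variable successors of $C$ from stage $n$ satisfies $\pi_n(K)=C$, so $K$ meets $\pi_n^{-1}(U)$. For empty interior (ii), we must show that a basic open set $\pi_n^{-1}(U)$ with $U\subseteq\pi_n(K)$ also contains points off $K$. Suppose $K$ is born at stage $N$; enlarging $n$, we may assume $n\ge N$, so that $\pi_n(K)=C_n$ and $U\subseteq C_n$ is open. By density of $(x_m)$, there is $m\ge n$ with $x_m$ at an interior height of $U$, transported along the height-preserving successors. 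At stage $m$, the point-evaluation circles created over $x_m$ map under $\pi_n$ (through $p_m$ and the variable homeomorphisms) to points of $C_n$ at height $x_m$. We need at least one such point to lie in $U$. Since the $\varphi_m$ evaluate all of $C_m$'s strands at $x_m$, the new circles hit every character over $x_m$, including those in $U$. Any thread through such a new circle then gives a point of $\pi_n^{-1}(U)\setminus K$. This is the step that needs the most care: one has to check that the new circles over $x_m$ project onto points of the specific open arc $U$, using that the $2d_n$ new circles correspond bijectively to the diagonal characters over $x_m$.

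Once conditions (i)--(iv) are established, Lemma~\ref{dense} gives $\operatorname{Spec}(B)\cong\widetilde{\sqcup}(S^1)$.
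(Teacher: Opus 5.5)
Your proposal is correct and follows the paper's proof essentially step for step. It uses the same family $\mathcal C$ of persistent circles, the same weighted metric giving $\operatorname{diam}(T)\leq 2^{1-m}$ for nullness, lifting along variable successors for density, the clopen sets $\pi_m^{-1}(C_m(x))$ for separation, and, for empty interior, the density of $(x_n)$ together with the new point-evaluation circle that collapses onto the variable lift of a chosen point of the arc at height $x_k$, exactly as you describe.
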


\begin{proof}
Suppose that the construction starts at stage $N$. Put $X_n=\operatorname{Spec}(B_n)$, $X=\operatorname{Spec}(B)\cong\varprojlim(X_n,p_n)$ and let $\pi_n\colon X\to X_n$ be the coordinate projections. For $m<k$, write $p_{m,k}=p_m\circ p_{m+1}\circ\cdots\circ p_{k-1}$. Let $\mathcal C$ be the family of persistent circles in $X$. 
By Lemmas~\ref{houji} and \ref{yuanzhou}, $\mathcal C$ is a countably infinite family of pairwise disjoint embedded circles. We verify the four conditions in Lemma~\ref{dense}.
\medskip
\noindent\textup{(i)}\enspace
For each $n\geq N$, choose a compatible metric $\delta_n$ on
$X_n$ such that $\delta_n\leq1$, and define
\[ d(x,y) = \sum_{n=N}^{\infty} 2^{-n}\delta_n\bigl(\pi_n(x),\pi_n(y)\bigr),
\qquad x,y\in X. \]
Suppose that $T\in\mathcal C$ first occurs at stage $m$. Then $\pi_n(T)$ is a singleton whenever $n<m$. Hence, for $x,y\in T$,
\[ d(x,y) \leq \sum_{n=m}^{\infty}2^{-n} = 2^{1-m}. \]
Thus $\operatorname{diam}(T)\leq2^{1-m}$. Given $\varepsilon>0$, choose $M$ such that $2^{1-M}<\varepsilon$. Every circle first occurring after stage $M$ then has diameter less than $\varepsilon$. Since only finitely many circles occur before stage $M$, the family $\mathcal C$ is null.

\medskip
\noindent\textup{(ii)}\enspace
Let $T\in\mathcal C$, let $z\in T$, and let $U$ be a neighbourhood of $z$. Choose $m$ and an open set $O\subseteq X_m$ such that $z\in\pi_m^{-1}(O)\subseteq U$, where $m$ is at least the stage at which $T$ first occurs. Since $\pi_m(T)$ is a circle and $O$ contains $\pi_m(z)$, the set $O\cap\pi_m(T)$ contains a nontrivial subarc. The heights of the points in this subarc therefore contain a nonempty open interval $J\subseteq(0,1)$.

Since $(x_n)$ is dense in $(0,1)$, choose $k\geq m$ such that $x_k\in J$. Choose a point $\zeta\in O\cap\pi_m(T)$ of height $x_k$, and let $\eta\in X_k$ be its lift along the variable successors. Thus $p_{m,k}(\eta)=\zeta$. 
At stage $k+1$, a point-evaluation successor produces a new order-one circle $E\subseteq X_{k+1}$ satisfying $p_k(E)=\{\eta\}$. Let $L\in\mathcal C$ be the persistent continuation of $E$. Then $\pi_m(L)=\{\zeta\}\subseteq O$, and hence $L\subseteq\pi_m^{-1}(O)\subseteq U$. Since $L\neq T$, every neighbourhood of $z$ contains a point not belonging to $T$. Therefore $T$ has empty interior.

\medskip
\noindent\textup{(iii)}\enspace
Let $U\subseteq X$ be a nonempty open set. Choose $m$ and a nonempty open set $O\subseteq X_m$ such that $\pi_m^{-1}(O)\subseteq U$. Fix $\xi_m\in O$. Inductively choose $\xi_{n+1}$ on the variable successor of the component containing $\xi_n$ so that
\[ p_n(\xi_{n+1})=\xi_n, \qquad n\geq m. \]
Together with the forced earlier coordinates, these points determine an element $\xi\in X$. Since its coordinates follow variable successors from stage $m$ onward, $\xi$ belongs to a persistent circle. Moreover, $\pi_m(\xi)=\xi_m\in O$, so $\xi\in U$. Consequently,
\[
\overline{\bigcup_{T\in\mathcal C}T}=X.
\]

\medskip
\noindent\textup{(iv)}\enspace
Let $x,y\in X$ be distinct points which do not belong to the same persistent circle. For each $n$, let $C_n(x)$ and $C_n(y)$ be the circle components of $X_n$ containing $\pi_n(x)$ and $\pi_n(y)$, respectively.

Suppose that $C_n(x)=C_n(y)$ for every $n$. If only finitely many of the corresponding bonding maps are constant, then the common component thread is eventually formed entirely by variable successors, so $x$ and $y$ belong to the same persistent circle. If infinitely many bonding maps are constant, then for every fixed $n$, a later constant bonding map forces $\pi_n(x)=\pi_n(y)$. Thus $x=y$. Both conclusions contradict our assumptions. Hence there is some $m$ such that $C_m(x)\neq C_m(y)$.

Set $H=\pi_m^{-1}\bigl(C_m(x)\bigr)$. Since $X_m$ is a finite disjoint union of circles, $C_m(x)$ is clopen in $X_m$. Therefore $H$ is clopen in $X$, with
$x\in H$ and $y\notin H$. Moreover, $H$ is $\mathcal C$-saturated. Indeed, if $T\in\mathcal C$, then $\pi_m(T)$ is connected and is therefore contained in a single circle component of $X_m$. Hence either
\[
T\subseteq H
\qquad\text{or}\qquad
T\cap H=\varnothing.
\]
Thus $\mathcal C$ satisfies the hypotheses of Lemma~\ref{dense}. Hence
\[ X=\operatorname{Spec}(B)\cong\widetilde{\sqcup}(S^1).\qedhere \]
\end{proof}

\begin{corollary}
There exists a unital AD algebra containing infinitely many mutually isomorphic but pairwise nonconjugate Cartan subalgebras, each of which is a $C^*$-diagonal.
\end{corollary}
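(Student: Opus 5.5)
The corollary follows by combining Corollary~\ref{wuxian} with Theorem~\ref{michou}. We take $\mathcal{A}=\varinjlim(A_n,\varphi_n)$ and, for each $r\geq1$, the Cartan subalgebra $B^{[r]}\subseteq\mathcal{A}$ constructed in the proof of Corollary~\ref{wuxian}. It comes from a permutation twist at stage $N(r)$ whose reduced matrix is $R^{[r]}=(I_r+P_r)\oplus 2I_{d_N-r}$, realized via Theorem~\ref{keda}. The tail system $(A_n,\varphi_n)_{n\geq N(r)}$ has the same limit $\mathcal{A}$, so all $B^{[r]}$ live in one fixed unital AD algebra. Proposition~\ref{cunzai}, applied to the tail system, shows each $B^{[r]}$ is a $\mathrm{C}^*$-diagonal.

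For pairwise nonconjugacy we use the invariant $\mathfrak M$. Corollary~\ref{wuxian} gives $\mathfrak M(B^{[r]})=\{2,2r\}$, and these sets are distinct for distinct $r$. An automorphism $\alpha$ of $\mathcal{A}$ with $\alpha(B^{[r]})=B^{[s]}$ is an isomorphism of Cartan pairs. By Lemma~\ref{bubian} it preserves $\mu$ on circle components, and by Lemma~\ref{yuanzhou} these components are exactly the persistent circles. Hence $\alpha$ preserves $\mathfrak M$, which forces $r=s$.

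For mutual isomorphism we argue as follows. Each $B^{[r]}$ is obtained from a permutation twist at a finite stage by the inductive construction, so Theorem~\ref{michou} gives $\operatorname{Spec}(B^{[r]})\cong\widetilde{\sqcup}(S^1)$ for every $r$. By Gelfand duality, all $B^{[r]}$ are then isomorphic to $C(\widetilde{\sqcup}(S^1))$ as abelian $\mathrm{C}^*$-algebras.

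The main point to check is that Theorem~\ref{michou} really applies when the construction starts at a later stage $N$ with a general twist, not just at stage~$1$. Its proof only uses Lemma~\ref{houji}: each old circle has a height-preserving variable successor, and point evaluations at the dense points $x_n$ produce new order-one circles. Both facts hold for any starting stage and twist, because the successor recursion $U_{n+1}$ is the same. Lemma~\ref{houji} and Lemma~\ref{yuanzhou} are stated for general $\sigma$, so nothing further is needed. One should also confirm that the countably infinite family $\mathcal C$ is nonempty and infinite regardless of $r$, which holds because $2d_n$ new circles appear at every stage.
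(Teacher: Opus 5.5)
Your proposal is correct and takes essentially the same approach as the paper, which simply combines Corollary~\ref{wuxian} (the $\mathrm{C}^*$-diagonals $B^{[r]}$ are pairwise nonconjugate because $\mathfrak M(B^{[r]})=\{2,2r\}$) with Theorem~\ref{michou} (all their spectra are homeomorphic to $\widetilde{\sqcup}(S^1)$, so the $B^{[r]}$ are isomorphic by Gelfand duality). Your extra check that Theorem~\ref{michou} covers twists introduced at a later stage $N$ is already built into the statement and proof of that theorem, which explicitly allow the construction to start at stage $N$.
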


\begin{proof}
This follows immediately from Corollary~\ref{wuxian} and Theorem~\ref{michou}.
\end{proof}

Recall that the AD algebra $\mathcal{A}$ constructed above is $\mathcal Z$-stable, where $\mathcal Z$ denotes the Jiang--Su algebra. Using this absorption property, we obtain the following result.

\begin{corollary}\label{butonggou}
The algebra $\mathcal{A}$ contains infinitely many Cartan subalgebras which are pairwise nonisomorphic as abelian $C^*$-algebras.
\end{corollary}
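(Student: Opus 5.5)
We use the $\mathcal Z$-stability of $\mathcal{A}$ together with the tensor product method of \cite{LR2}. Since $\mathcal{A}\cong\mathcal{A}\otimes\mathcal Z$, and also $\mathcal Z\cong\mathcal Z^{\otimes k}$ for every $k$, it suffices to produce Cartan subalgebras $C_k\subseteq\mathcal Z$ whose spectra have different, controlled covering dimensions. Take, for instance, the Cartan subalgebra with the Cantor-type one-dimensional spectrum coming from the Deeley--Putnam--Strung dynamical model \cite{DPS}, or Li's inductive-limit Cartan subalgebras of $\mathcal Z$ built from dimension drop algebras \cite{L2}. Then form
\[
B_k=B^{\sigma}\otimes C^{\otimes k}\subseteq \mathcal{A}\otimes\mathcal Z^{\otimes k}\cong\mathcal{A}.
\]

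First, tensor products of Cartan pairs of nuclear (in particular separable, amenable groupoid) $\mathrm{C}^*$-algebras are Cartan pairs. The twisted Weyl groupoids multiply, topological principality is preserved, and the product expectation is faithful. The image of $B_k$ under a fixed isomorphism $\mathcal{A}\otimes\mathcal Z^{\otimes k}\cong\mathcal{A}$ is therefore a Cartan subalgebra of $\mathcal{A}$. Its spectrum is
\[
\operatorname{Spec}(B_k)\cong\widetilde{\sqcup}(S^1)\times Y^k, \qquad Y=\operatorname{Spec}(C).
\]

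Next we compute the covering dimension. By Theorem~\ref{michou}, $\widetilde{\sqcup}(S^1)$ is a one-dimensional compactum, being an inverse limit of one-dimensional graphs that contains circles. If $Y$ contains an arc, or is otherwise chosen with $\dim Y=d\geq1$, then the product theorem for compacta gives $\dim(\widetilde{\sqcup}(S^1)\times Y^k)\geq 1+k$. The upper bound $1+kd$ follows from subadditivity, and the lower bound from the product containing $S^1\times[0,1]^k$. The resulting dimensions are pairwise distinct after passing to a subsequence of $k$. Since covering dimension is a homeomorphism invariant, the $B_k$ are pairwise nonisomorphic as abelian $\mathrm{C}^*$-algebras.

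The main obstacle is to secure a Cartan subalgebra $C\subseteq\mathcal Z$ whose spectrum has positive dimension and contains an arc. I would try Li's construction from the prime dimension drop blocks $I_{r,s}$ first. There the finite-stage spectra are graphs over $[0,1]$, as in Theorem~\ref{putu}, and a persistent-strand argument like Lemma~\ref{yuanzhou} yields embedded arcs in the limit spectrum. If that fails, the fallback is to take $C=C([0,1])\otimes D$ inside a suitable unital subalgebra. Alternatively, one can quote directly the nonuniqueness-by-dimension theorem of \cite{LR2} for $\mathcal Z$-stable algebras. This gives Cartan subalgebras $B\otimes C'$ with spectra $X\times[0,1]^k$.
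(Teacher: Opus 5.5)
\textbf{Gap: the required Cartan subalgebra of $\mathcal Z$ is never produced.} Your scheme $B^\sigma\otimes C^{\otimes k}\subseteq\mathcal{A}\otimes\mathcal Z^{\otimes k}\cong\mathcal{A}$ depends on an input you yourself call the main obstacle. You need a Cartan subalgebra $C\subseteq\mathcal Z$ whose spectrum $Y$ has finite covering dimension and contains an arc, and you never supply it.
\begin{enumerate}
\item You give no reason why the Deeley--Putnam--Strung spectrum contains an arc. If it were ``Cantor-type'', as you say, it would contain none.
\item The persistent-strand argument of Lemma~\ref{yuanzhou} is not carried out for any model of $\mathcal Z$. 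It relied on the identity summand $f(t)$ of $\varphi_n$ carrying successor circles homeomorphically (Lemma~\ref{houji}), and you have not checked that such summands exist in a model of $\mathcal Z$.
\item The fallback $C([0,1])\otimes D$ ``inside a suitable unital subalgebra'' does not yield a Cartan subalgebra of $\mathcal Z$. Maximal abelianness and regularity are relative to the ambient algebra.
\end{enumerate}
Without the arc, your lower bound has no justification. Covering dimension is only subadditive on products of compacta; there are compacta with $\dim(Y\times Y)<2\dim Y$. So ``the product theorem for compacta'' does not give $\dim\geq 1+k$. Your last alternative, quoting \cite{LR2}, is essentially the paper's route. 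However, what the paper extracts from \cite[Proposition~5.1]{LR2} is Cartan subalgebras of $\mathcal Z$ with finite, unbounded spectral dimension. Nothing supports your claim of spectra of the form $X\times[0,1]^k$.

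\textbf{The paper's argument.} The paper needs no arcs and no lower bound for products. It takes $D_j\subseteq\mathcal Z$ with $e_j=\dim\operatorname{Spec}(D_j)$ finite and unbounded, and passes to a subsequence with $e_{j+1}>e_j+1$. It then uses a single copy of $\mathcal Z$ and only the elementary bounds $e_j\leq\dim(X\times Y_j)\leq e_j+1$. The lower bound comes from the closed embedding $\{x\}\times Y_j$, and the upper bound from subadditivity with $\dim X=1$. Since the intervals $[e_j,e_j+1]$ are disjoint, the spectra are pairwise nonhomeomorphic.

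\textbf{How to repair your route.} You can drop the arc hypothesis entirely.
\begin{enumerate}
\item Any Cartan subalgebra of $\mathcal Z$ is maximal abelian, so it contains $1$. It has no nontrivial projections, and it is infinite-dimensional, because a finite-dimensional MASA forces the ambient algebra to be finite-dimensional. Hence $Y$ is a nondegenerate metrizable continuum, and $\dim_{\mathbb Q}Y\geq1$.
\item Rational cohomological dimension is additive on products of compacta and is bounded above by covering dimension.
\item Therefore $1+k\leq\dim_{\mathbb Q}(X\times Y^k)\leq\dim(X\times Y^k)\leq 1+k\dim Y$.
\end{enumerate}
You would still have to cite a specific Cartan subalgebra of $\mathcal Z$ with $\dim Y<\infty$.
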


\begin{proof}
Let $B=B^{[1]}$ and put $X=\operatorname{Spec}(B)$. Since $X$ is the inverse limit of finite unions of circles, covering dimension does not increase under inverse limits and hence $\dim X\leq 1$. On the other hand, $X$ contains a persistent circle as a closed subspace. Therefore, $\dim X=1$.
	
By \cite[Proposition~5.1 and its proof]{LR2}, the Jiang--Su algebra $\mathcal Z$ contains Cartan subalgebras $D_j\subseteq\mathcal Z$, $j\geq1$, whose spectra $Y_j=\operatorname{Spec}(D_j)$ have finite and unbounded covering dimensions. Passing to a subsequence, we may assume that, with $e_j=\dim Y_j$, one has
\[ e_{j+1}>e_j+1
\qquad\text{for every }j\geq1. \]	
Choose an isomorphism $\Theta\colon \mathcal{A}\otimes\mathcal Z\longrightarrow \mathcal{A}$ and define
\[ H_j=\Theta(B\otimes D_j)\subseteq \mathcal{A}. \]
The tensor product of Cartan pairs is again a Cartan pair, so each $H_j$ is a Cartan subalgebra of $\mathcal{A}$. Moreover, 
\[ \operatorname{Spec}(H_j)\cong X\times Y_j. \]
For any $x\in X$, the inclusion
\[ Y_j\cong\{x\}\times Y_j\subseteq X\times Y_j \]
is a closed embedding. Together with the product inequality for covering dimension, this gives
\[ e_j \leq \dim(X\times Y_j) \leq \dim X+\dim Y_j = e_j+1. \]
The intervals $[e_j,e_j+1]$ are pairwise disjoint. Hence the spaces $\operatorname{Spec}(H_j)$ have pairwise distinct covering dimensions and are therefore pairwise nonhomeomorphic.
\end{proof}

\section*{AI statement}

During the preparation of this work, the author used GPT-5.6 Sol for the suggestion to identify the Cartan spectra with the dense amalgam of circles and for translation and language editing. The author has checked and substantially revised all arguments
and takes full responsibility for the final version.

\end{document}